\newtheorem{thm}{Theorem}[section]
\newtheorem{cor}[thm]{Corollary}
\newtheorem{lem}[thm]{Lemma}
\newtheorem{prop}[thm]{Proposition}
\theoremstyle{definition}
\newtheorem{defn}[thm]{Definition}
\newtheorem{example}[thm]{Example}
\theoremstyle{remark}
\newtheorem{rem}[thm]{Remark}
\numberwithin{equation}{section}
\newcommand{\psitilde}{\widetilde{\psi}}
\newcommand{\OO}{\mathcal{O}}
\renewcommand{\AA}{\mathbf{A}}
\newcommand{\Auts}{\underline{\mathrm{Aut}}}
\newcommand{\AL}{\mathrm{AL}}
\newcommand{\atilde}{\widetilde{a}}
\newcommand{\btilde}{\widetilde{b}}
\newcommand{\Pic}{\mathrm{Pic}}
\newcommand{\FT}{FT}
\newcommand{\pDer}{\mbox{$p$-}\mathrm{Der}}
\newcommand{\DI}{\mathrm{DI}}
\newcommand{\Sch}{\mathsf{Sch}}
\newcommand{\Set}{\mathsf{Set}}
\newcommand{\CRing}{\mathsf{CRing}}
\newcommand{\Fun}{\mathrm{Fun}}
\newcommand{\Jhat}{\widehat{J}}
\newcommand{\Gr}{\mathrm{Gr}}
\newcommand{\Spec}{\mathrm{Spec}}
\newcommand{\PP}{\mathbf{P}}
\newcommand{\KS}{\mathrm{KS}}
\newcommand{\FFbar}{\overline{ \mathbf{F}} }
\newcommand{\ZZpurcom}{W_{p,\infty}( \mathbf{F}_p)}
\newcommand{\vareps}{\varepsilon}
\newcommand{\Uhat}{\widehat{U}}
\newcommand{\AAhat}{\widehat{\mathbf{A}}}
\newcommand{\hattimes}{\widehat{\times}}
\newcommand{\J}{J_p}
\newcommand{\Xhat}{\widehat{X}}
\newcommand{\eps}{\varepsilon}
\newcommand{\Vhat}{\widehat{V}}
\newcommand{\xdot}{\dot{x}}
\newcommand{\ydot}{\dot{y}}
\newcommand{\fphi}{f^{\phi}{}}
\newcommand{\inclusion}{\hookrightarrow}
\renewcommand{\hbar}{\bar{h}}
\newcommand{\FFpbar}{\bar{\mathbf{F}}_p}
\newcommand{\isom}{\cong}
\newcommand{\fbar}{\bar{f}}
\renewcommand{\r}{\frac{\partial f}{\partial p}}
\newcommand{\ZZ}{\mathbf{Z}}
\newcommand{\D}{\mathcal{D}}
\newcommand{\ann}{\mathrm{ann}}
\newcommand{\unit}{\mathrm{unit}}
\newcommand{\Spf}{\mathrm{Spf}}
\title[Deligne-Illusie Classes I]{Deligne-Illusie Classes I: Lifted Torsors of Lifts of the Frobenius for Curves.}
\author{Taylor Dupuy}
\email{dupuy@math.ucla.edu}
\subjclass[2000]{Primary 11G99, Secondary 14G40}
\begin{document}

\maketitle

\begin{abstract}
 For curves of genus bigger than one we prove that Buium's first arithmetic jet spaces admit the structure of a torsor under some line bundle.
 This result lifts a known constructions in characteristic $p$ where the first $p$-jet space modulo $p$ is a sheaf under the Frobenius tangent sheaf (parametrizing Frobenius linear derivations).
 In particular we show there is a natural family of lifts of the Frobenius tangent bundle so that the first $p$-jet space (and hence higher order lifts of the Frobenius) form torsor a under this bundle. 
 
 The \u{C}ech cohomology classes associated to this torsor structure, which we call the Deligne-Illusie class, has strong analogies with the classical Kodaira-Spencer class from deformation theory.
\end{abstract}

\tableofcontents

\section{Introduction}
In this paper $p$ will always denote a prime.
We will let $\CRing$ denote the category of commutative rings with a unit, 
$\CRing_B$ denote the category of commutative rings over a base ring $B$, $\Sch_S$ or $\Sch_B$ be the category of schemes over a scheme $S$ or a ring $B$ and $\Set$ denote the category of sets.

For a ring $B$ over a ring of $p$-adic integers we will use the notation $B_n = B/p^{n+1}$.
We will use $\widehat{B}$ or $B^{\hat{p}}$ to denote $p$-adic completion $\varprojlim B/p^{n+1}$. 
For a scheme $Y$ over such a ring $B$ we will use the notation $Y_n$ for the reduction modulo $p^{n+1}$ i.e.  $Y:= Y \otimes_B B_n$.
We will let $\widehat{Y} = \varinjlim Y_n$ denote the $p$-formal completion of a scheme $Y$ over a $p$-adic ring $B$.

By a curve in $\Sch_B$ we will mean a scheme of relative dimension $1$.

\subsection{(Differential algebraic) Kodaira-Spencer classes}
Let $K$ be a characteristic zero field with a derivation $D: K \to K$. 
Let $X/K$ be a smooth scheme. 
Let $(U_i \to X)_{i\in I}$ be a Zariski affine open cover of $X$ such that $D_i: \OO(U_i) \to \OO(U_i)$ are lifts of the derivation $D$ on $K$.
We can then form the cohomology class
$$ \KS(X):= [D_i - D_j] \in H^1(X,T_{X/K}) $$
where $T_{X/K}$ denotes the relative tangent sheaf, whose sections are $K$-linear derivations on $\OO$.
The class $\KS(X) \in H^1(X,T_{X/K})$ is called the \textbf{Kodaira-Spencer} class.

For a $X/K$ a variety over a field with a derivation, one can define a twisted version of the tangent bundle $J^1(X/K,D) \to X$ whose local sections correspond to derivations lifting the derivation $D$ on the base.
The space $J^1(X/K,D)$ is called the \textbf{first jet space} of $X/K$.
\begin{thm}[\cite{Buium1994}, Proposition 2.5, page 65] \label{thm: geomdescent}
Let $X/K$ be a smooth variety over a field with a derivation $D$.
Suppose in addition that $K$ is algebraically closed.
The following are equivalent
\begin{enumerate}
\item $\KS(X) =0$ in $H^1(X, T_{X/K})$
\item $J^1(X/K,D) \cong T_{X/K} $ as $T_{X/K}$-torsors.
\item  There exists some $X' \in \Sch_{K^{D}}$ such that 
 $$ X \cong X' \otimes_{K^{D}}K. $$
Here $K^{D}$ denotes the field of constants
$$ K^{D} = \lbrace c \in K: D(c) =0 \rbrace.$$
\end{enumerate}
\end{thm}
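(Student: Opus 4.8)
The plan is to treat the three conditions as $(1)\Leftrightarrow(2)$, $(3)\Rightarrow(1)$, and $(1)\Rightarrow(3)$, with essentially all of the content in the last implication. \emph{The equivalence $(1)\Leftrightarrow(2)$} should be purely formal once one records the torsor structure of the jet space. First I would check that, over an open $U\subseteq X$, a section of $J^1(X/K,D)\to X$ is the same datum as a derivation $\OO(U)\to\OO(U)$ lifting $D$, and that two such sections differ by a $K$-linear derivation, i.e.\ by an element of $T_{X/K}(U)$; the ``add a vector field'' action is exactly the $T_{X/K}$-torsor structure on $J^1(X/K,D)$, and it is read off directly from the construction of the jet space. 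Against the local trivializations furnished by the chosen lifts $D_i$ over $U_i$, the gluing data of this torsor is precisely the cocycle $(D_i-D_j)$, so the class of $J^1(X/K,D)$ in $H^1(X,T_{X/K})$ equals $\KS(X)$. A torsor under an abelian sheaf is trivial --- equivalently, admits a global section, equivalently, is isomorphic to the sheaf acting on itself --- if and only if its class in $H^1$ vanishes, which is the equivalence of (1) and (2). Algebraic closedness of $K$ plays no role here.

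\emph{For $(3)\Rightarrow(1)$:} if $X\cong X'\otimes_{K^D}K$ with $X'\in\Sch_{K^D}$, then $\id\otimes D$ is a global derivation of $\OO_X=\OO_{X'}\otimes_{K^D}K$ lifting $D$. Taking its restrictions as the local lifts $D_i$ makes $D_i-D_j=0$, whence $\KS(X)=0$; equivalently this global derivation is a global section of $J^1(X/K,D)$, giving (2) directly.

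\emph{The implication $(1)\Rightarrow(3)$ is the heart of the matter.} Assume $\KS(X)=0$. Then $(D_i-D_j)$ is a coboundary, say $D_i-D_j=\theta_i-\theta_j$ with $\theta_i\in T_{X/K}(U_i)$, so the derivations $\nabla_i:=D_i-\theta_i$ agree on overlaps and glue to a global derivation $\nabla\colon\OO_X\to\OO_X$ lifting $D$ (equivalently, a global section of $J^1(X/K,D)$). It then remains to \emph{descend} $X$ along this everywhere-defined lift of $D$. I would form the subsheaf of constants $\OO_X^\nabla=\ker\nabla$, a sheaf of $K^D$-algebras on the space $|X|$, set $X':=(|X|,\OO_X^\nabla)$, and try to prove that $X'$ is a smooth, finite-type $K^D$-scheme with a canonical isomorphism $X'\otimes_{K^D}K\cong X$. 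The crux is the affine-local claim: for $\Spec A\subseteq X$, the ring of constants $A^\nabla$ is smooth and of finite type over $K^D$ and the natural map $A^\nabla\otimes_{K^D}K\to A$ is an isomorphism --- concretely, one wants to straighten $\nabla$ \'etale-locally to the ``constant'' derivation $\id\otimes D$ by producing $\nabla$-horizontal local coordinates. This is where the hypotheses enter: characteristic $0$ lets one integrate the first-order datum $\nabla$ (the only denominators being factorials); smoothness supplies \'etale coordinates and the uniqueness of lifts of derivations along \'etale maps, so that the constants have transcendence degree $\dim X$ over $K^D$; and $K$, hence $K^D$, being algebraically closed makes the constants plentiful and gives $X'$ a good model. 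I expect this local straightening, together with the bookkeeping that reassembles the local pieces into an honest separated finite-type $K^D$-scheme, to be the main obstacle. In the situation of the present paper, where $X$ is a smooth projective curve, it simplifies: extend $\nabla$ to the function field $K(X)$, check that $L_0:=K(X)^\nabla$ satisfies $L_0\cap K=K^D$ and --- crucially, using that $\nabla$ is everywhere defined --- $L_0\cdot K=K(X)$ (so $\mathrm{trdeg}(L_0/K^D)=1$), and let $X'$ be the unique smooth projective model of $L_0/K^D$; uniqueness of the smooth projective model then upgrades the birational identification $X'\otimes_{K^D}K\sim X$ to the required isomorphism.
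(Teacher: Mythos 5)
First, a point of comparison: the paper does not prove this theorem at all — it is imported verbatim from Buium (\cite{Buium1994}, Prop.\ 2.5) as the geometric model for the arithmetic statement — so there is no in-paper argument to measure yours against; I can only assess the proposal on its own. Your treatment of $(1)\Leftrightarrow(2)$ (the class of the $T_{X/K}$-torsor $J^1(X/K,D)$ is $\KS(X)$, and a torsor under an abelian sheaf is trivial iff its class vanishes) and of $(3)\Rightarrow(1)$ (the lift $\id\otimes D$ kills the cocycle) is standard and correct.

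The gap is in $(1)\Rightarrow(3)$. Vanishing of $\KS(X)$ hands you \emph{some} global derivation $\nabla$ lifting $D$, but the sheaf of constants of an arbitrary such $\nabla$ need not be a model over $K^{D}$, and the \'etale-local straightening to $\nabla$-horizontal coordinates that you propose is not always possible. Concretely: take $D=0$ on $K$, so $K^{D}=K$ and (1), (3) hold trivially for every $X$; take $X=\AA^1_K$ and $\nabla=\partial/\partial x$, a perfectly good global lift of $D$. Then $\OO_X^{\nabla}=K$ (characteristic zero), the map $K\otimes_K K\to K[x]$ is not surjective, and no \'etale-local horizontal coordinate exists. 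The failure persists with $D\neq 0$: on $\AA^1$ over $K=\overline{\QQ(t)}$ with $D=d/dt$, the lift $\nabla=D+x^2\,\partial/\partial x$ has ring of constants $\overline{\QQ}$ in $K[x]$. Your projective-curve shortcut breaks the same way: on $\PP^1_K$ with $D=0$, the everywhere-regular lift $\nabla=x^2\,\partial/\partial x$ has $K(X)^{\nabla}=K$, so $L_0\cdot K\neq K(X)$ even though $\nabla$ is globally defined. The actual content of $(1)\Rightarrow(3)$ is that one can \emph{choose} a global lift with enough constants — e.g.\ via the density of $\nabla$-sharp points after passing to a differentially closed extension, or via the formal exponential $a\mapsto\sum_n \nabla^n(a)t^n/n!$ combined with a field-of-definition argument exploiting that $K^{D}$ is algebraically closed. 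You correctly flag this step as the crux, but the mechanism you propose (take the constants of whatever lift the coboundary produces and straighten it locally) provably fails on these examples, so it must be replaced rather than merely fleshed out.
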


The aim of this paper is to show that an arithmetic analog of this theorem exists in the case of curves over the $p$-adic ring $R=\widehat{\mathbf{Z}_p^{ur}}$ the $p$-adic completion of the maximal unramified extension of the $p$-adic integers.

In the arithmetic variant of Theorem \ref{thm: geomdescent} the first jet space $J^1(X/K,D)$ is replaced by the first arithmetic jet space of Buium.
Local sections of the first arithmetic jet space of a scheme correspond to local lifts of the Frobenius.

\subsection{Witt vectors}
We refer to Hazewinkel \cite{Hazewinkel2009} and for an introduction to Witt vectors.

We recall that the full ($p$-typical) witt vectors $W_{p,\infty}$ are a functor from rings to rings.
A basic property is that $W_{p,\infty}(\mathbf{F}_p) = \ZZ_p$ the $p$-adic integers.
For $k \subset \FFpbar$ the ring $W_{p,\infty}(k)$ complete discrete valuation rings with residue field $k$; it is a $p$-adic completion of an unramified extension of the $p$-adic integers.
The ring $W_{p,\infty}(\FFpbar)$ is isomorphic to $\widehat{\mathbf{Z}_p^{ur}} = \ZZ_p[\zeta; \zeta^n =1 , p\nmid n]^{ \widehat{} }$, the $p$-adic completion maximal unramified extension of the $p$-adic integers.
All of there rings have a unique lift of the Frobenius $\phi$ which is constant on $\ZZ_p$ and acts on roots of unity by $\zeta \mapsto \zeta^p$.

We also recall that the truncated ($p$-typical) witt vectors of length two $W_{p,1}$ are a functor from rings to rings where
for a ring $A$ we have $W_{p,1}(A) = A \times A$ as sets with addition and multiplication rules given by
\begin{eqnarray*}
 (x_0,x_1)(y_0,y_1) &=& (x_0y_0, x_0^p y_1 + y_0^p x_1 + p x_1y_1), \\
 (x_0,x_1) + (y_0,y_1) &=& (x_0 + y_0, x_1 + y_1 + C_p(x_0,y_0) ),\\
 C_p(S,T) &=& \frac{S^p + T^p - (S+T)^p }{p} \in \ZZ[S,T].
\end{eqnarray*}
This functor has the property that $W_{p,1}(\mathbf{F}_p) \cong \ZZ/p^2$.
The ideal $V_p( W_{p,1}(A) ) = \lbrace (0,a) : a\in A \rbrace$ has square zero for every ring $A$.

\subsection{$p$-derivations and lifts of the Frobenius}
Let $A$ be a ring and $B$ be an $A$-algebra.
Let $p$ be a prime number ($p$ will always denote a prime in this paper). 
A $p$-\textbf{derivation} from $A$ to $B$ is a map of sets $\delta:A\to B$ such that for all $a,b\in A$ we have
\begin{eqnarray*}
\delta(a+b) &=& \delta(a) + \delta(b) + C_p(a,b), \\
\delta(ab) &=& \delta(a)b^p + a^p \delta(b) + p\delta(a)\delta(b),\\
\delta(1) &=& 0.\\
C_p(S,T) &=& \frac{S^p + T^p - (S+T)^p }{p} \in \ZZ[S,T].
\end{eqnarray*}

These operations were introduced independently by Joyal \cite{Joyal1985} and Buium \cite{Buium1996}.
The collection of $p$-derivations from a ring $A$ to a ring $B$ will be denoted by $\pDer(A\to B)$.

\begin{example}
\begin{enumerate}
\item If $A=B = \ZZ_p$, the $p$-adic integers, then the map $\delta_p(x) = \frac{x-x^p}{p}$ defines a $p$-derivation.
\item If $A = \ZZ/p^2$ and $B= \ZZ/p$ then the division-by-$p$ map $[1/p]: p\ZZ/p^2 \to \ZZ/p$ makes sense and the map
 $\delta_p: \ZZ/p^2 \to \ZZ/p$ defined by $x\mapsto [1/p] (x - x^p)$ gives a $p$-derivation.
\end{enumerate}
\end{example}

For a ring $A$ we will let $W_{p,1}(A)$ denote the ring of $p$-typical Witt vectors of length two. 

A $p$-derivation $\delta: A\to B$ is equivalent to a map $A \to W_{p,1}(B)$ such that its composition with the canonical projection map $W_{p,1}(B)\to B$ is the underlying algebra map $A\to B$. 
This is similar to the fact that morphisms $A \to B[t]/(t^2)$ such that the composition with the projection $B[t]/(t^2) \to B$ give the algebra map $A \to B$, are equivalent to derivations from $A$ to $B$.

A \textbf{lift of the Frobenius} from $A\to B$ is a morphism $\phi: A\to B$ such that $$\phi(x) \equiv x^p \mod p.$$
If $B$ is a $p$-torsion free ring then a lift of a Frobenius is equivalent to a $p$-derivation and they are related by the formula $\delta_p(x) = \frac{\phi(x) - x^p}{p}$.

An expression for involving polynomial combinations of ring elements together with $p$-derivations will be called a Wittferential equation or arithmetic differential equation. 
A basic reference for this material is \cite{Buium2005}.

\subsection{Deligne-Illusie classes modulo $p$}
We will fix the following notation
\begin{itemize}
\item $R_0=k$ is a perfect field of characteristic $p$. 
\item $R = W_{p,\infty}(k)$ the ring of $p$-typical Witt vectors (equivalently, the $p$-adic completion of the maximal unramified extension of the $p$-adic integers).
\item $X/R$ a smooth scheme of finite type.
\item $\FT_{X_0}$ the $\OO_{X_0}$-module of Frobenius derivations.
For $D\in \FT_{X_0}$ a local section and $x,y\in \OO_{X_0}$ local sections we have 
\begin{eqnarray*}
D(xy) &=& D(x)y^p + x^p D(y), \\
D(x+y) &=& D(x) + D(y).
\end{eqnarray*}
Such derivations are called \textbf{Frobenius derivations}.
\end{itemize}

Let $\delta: R_1 \to R_{0}$ be the unique $p$-derivation from $R_1$ to $R_0$.
If $X/R$ is smooth, we can cover  $X$ by affine open subsets  $(U_i \to X_1)_{i\in I}$ and find local lifts of the $p$-derivations
$$ \delta_i: \OO(U_i)_1 \to \OO(U_i)_0.$$
The difference $\delta_i - \delta_j$ gives a well-defined map 
 $$ (\delta_i - \delta_j): \OO(U_{ij})_0  \to \OO(U_{ij})_0,$$ 
which is a derivation on the Frobenius, $(\delta_i - \delta_j) \in \FT_{X_0}( (U_{ij})_0)$.
The differences define a \u{C}ech cocycle for $\FT_{X_0}$ and one can check that the associated cohomology class is independent of the choice of lifts $\delta_i$.
Hence we have a well defined map
 $$ \DI_0: \pDer(R_1 \to R_0) \to H^1(X_0,\FT_{X_0}). $$
Since the $p$-derivation $R_1\to R_0$ is unique it will not hurt to denote the class associated to the lift $X_1$ by $\DI_0(X_1).$

Implicit in this construction is the fact that the sheaf $\pDer(\OO_{X_1} \to \OO_{X_0})$ is a torsor under $\FT_{X_0}$. 
We will say more about this in section \ref{sec: $p$-ders}.
The sheaf of $p$-derivations is representable, is called the first $p$-jet space of a curve modulo $p$, and will be denoted by $\J^1(X)_0$. 
This torsor appears many places in the literature under different names.
Sometimes it is refered to as ``the torsor of lifts of the Frobenius"  and is denoted by $\mathcal{L}$ in \cite{Ogus2007}.
The first $p$-jet space modulo $p$, $\J^1(X)_0$ is sometime known as the Greenberg transform $\Gr_1(X)$, this is the notation for example in \cite{Loeser2003}.

\begin{rem}
\begin{enumerate}
\item The construction of the Deligne-Illusie class is implicit in the proof of Theorem 2.1 in \cite{Deligne1987}. 
The class $\DI_0(X)$ is denoted by $c=[h_{ij}]$ in  \cite{Deligne1987}.
See in particular Remark 2.2.iii.
The construction also appears in \cite{Deligne1987} in the proof of Theorem 3.5 .

\item Chapter II, section 1, Theorem 1.1  in \cite{Mochizuki1996} also employs the Deligne-Illusie construction.
Implicit in the proof that $\D$ (the lifts of $X^{(p)}_0$, a Frobenius twist of $X$, to $\ZZ/p^2$) is a torsor under the first sheaf cohomology of the Frobenius tangent sheaf. 
We should note that in his treatment, Mochizuki considers schemes with log structures while we do not.
Mochizuki attributes the results in this section to \cite[proposition 4.12]{Kato1989} who attributes to \cite{Deligne1987}.
\item The Deligne-Illusie class $\DI_0(X_1)\in H^1(X_0,\FT_{X_0})$ should be compared to the classical deformation class $\KS(X_1)\in H^1(X_0, I_1\otimes T_{X_0})$ where $I_1$ is the ideal sheaf of $X_0 \hookrightarrow X_1$.
This construction of $\KS(X_1)$, in the equicharacteristic setting, can be found \cite{Olsson2007}. 
\end{enumerate}
\end{rem}

\subsection{Buium's arithmetic jet spaces}
Let $R = W_{p,\infty}(k)$ where $k$ is a perfect field of characteristic $p$.
Let $X/R$ be a scheme.
We define the \textbf{$r$th $p$-jet space functor} by
\begin{eqnarray*}
 &\J^{r}(X): \Sch_R \to \Set& \\
&\J^{r}(X)(A) = X(W_{p,r}(A))  \mbox{ for all } A \in \CRing_R. &
\end{eqnarray*}

The association $\J^{r}: \Sch_R \to  \Fun(\Sch_R, \Set)$ is functorial.
Here $\Fun$ denotes the category of functors where morphisms are natural transformations.
\begin{prop}[Borger \cite{Borger2011}, (12.5)]
For every $X/R$ a scheme of finite type, the functor $\J^r(X)$ is representable in the category of schemes.
\end{prop}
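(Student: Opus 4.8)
\emph{Proof proposal.} The plan is to realize $\J^r$ on affines as dual to a left adjoint of the Witt vector functor, and then to build the general jet scheme by gluing affine charts. For the affine case, note first that since $R$ carries its canonical $p$-derivation the comultiplication $R\to W_{p,\infty}(R)\to W_{p,r}(R)$ is a ring homomorphism, so $W_{p,r}$ restricts to an endofunctor of $\CRing_R$; on underlying sets it is $A\mapsto A^{r+1}$ with all structure maps given by polynomials, so it preserves all small limits and filtered colimits. Since $\CRing_R$ is locally presentable, the adjoint functor theorem furnishes a left adjoint $L_{p,r}\colon\CRing_R\to\CRing_R$. Concretely $L_{p,r}$ is Buium's truncated prolongation algebra: for $B=R[x_1,\dots,x_n]/(f_1,\dots,f_m)$,
\[
L_{p,r}(B)=R\bigl[\,x_j^{(s)}:1\le j\le n,\ 0\le s\le r\,\bigr]\big/\bigl(\,\delta^{s}f_i:1\le i\le m,\ 0\le s\le r\,\bigr),
\]
formed by adjoining formal iterated $p$-derivatives $x_j^{(s)}=\delta^s x_j$ of the coordinates and imposing the prolongations of the relations. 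Either description gives, for $X=\Spec B$,
\[
\J^r(X)(A)=X(W_{p,r}(A))=\mathrm{Hom}_{\CRing_R}(B,W_{p,r}(A))=\mathrm{Hom}_{\CRing_R}(L_{p,r}(B),A),
\]
so $\J^r(\Spec B)\cong\Spec L_{p,r}(B)$ is representable, the structure morphism to $\Spec B$ being dual to the inclusion $B\hookrightarrow L_{p,r}(B)$.

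Next I would show that $\J^r$ takes open immersions to open immersions, which reduces to a basic open $\Spec A_f\hookrightarrow\Spec A$. Writing $\phi(x)=x^{p}+p\,\delta(x)$ for the Frobenius lift attached to the truncated $\delta$-structure, the relation $\delta(f\cdot f^{-1})=\delta(1)=0$ forces $\delta(f^{-1})=-\delta(f)/(f^{p}\phi(f))$, and iterating this one obtains
\[
L_{p,r}(A_f)=L_{p,r}(A)\bigl[N_r(f)^{-1}\bigr],\qquad N_r(f):=\prod_{i=0}^{r}\phi^{i}(f)\in L_{p,r}(A),
\]
so that $\J^r(\Spec A_f)=D(N_r(f))$ is a basic open of $\J^r(\Spec A)$. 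Equivalently, forming length-$(r{+}1)$ $p$-typical Witt vectors is compatible with Zariski localization, $W_{p,r}(A_f)$ being a localization of $W_{p,r}(A)$. Two consequences follow: for every open $U\subseteq X$ the inclusion of functors $\J^r(U)\hookrightarrow\J^r(X)$ is an open subfunctor, with $\J^r(U\cap U')=\J^r(U)\cap\J^r(U')$ inside $\J^r(X)$; and $\J^r(X)=X(W_{p,r}(-))$ is a sheaf for the Zariski topology, since a Zariski cover $\{A_{f_i}\}$ of $A$ induces a Zariski cover $\{W_{p,r}(A_{f_i})\}$ of $W_{p,r}(A)$, over which $X(-)$ is a sheaf.

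Finally I would glue. Choose an affine open cover $X=\bigcup_iU_i$ with $U_i=\Spec B_i$. By the affine case each $\J^r(U_i)=\Spec L_{p,r}(B_i)$ is a scheme; by the previous paragraph $\J^r(U_i)\cap\J^r(U_j)=\J^r(U_i\cap U_j)$ is an open subscheme of each of $\J^r(U_i)$ and $\J^r(U_j)$ and represents the single functor $A\mapsto(U_i\cap U_j)(W_{p,r}(A))$; this provides canonical transition isomorphisms, and functoriality of $\J^r$ gives the cocycle condition on triple overlaps. Gluing produces a scheme $Z$ over $R$, and since $\mathrm{Hom}_R(-,Z)$ and $\J^r(X)$ are both Zariski sheaves carrying the common affine open cover $\{\J^r(U_i)\}$, they coincide; if $X$ is of finite type the cover may be taken finite, whence $Z$ is of finite type over $R$.

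The step carrying the real content is the localization compatibility of truncated Witt vectors, i.e.\ the identity $L_{p,r}(A_f)=L_{p,r}(A)[N_r(f)^{-1}]$ (equivalently, that $W_{p,r}$ sends Zariski covers to Zariski covers): this is what makes the affine charts patch and makes $\J^r(X)$ a Zariski sheaf. By contrast, the existence of the affine left adjoint is formal, and once open immersions and the sheaf property are established the gluing is just the standard criterion for recognizing a functor on $R$-schemes as a scheme.
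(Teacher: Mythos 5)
The paper does not actually prove this proposition; it is quoted from Borger, and the closest the paper comes is (a) the remark that $A\mapsto A_{p,1}$ ``does not localize well'' because of the quotient rule for $p$-derivations, and (b) the later computation showing only the truncated identity $((A_{p,1})_g)_n=((A_g)_{p,1})_n$, which holds because the series $\tfrac{-\dot g}{g^{2p}}\sum_j(p\dot g/g^p)^j$ terminates mod $p^{n+1}$ --- the paper explicitly warns that this naive localization fails integrally. Your sketch supplies the integral statement that the paper leaves to the citation, and it is the right one: the failure of naive localization is repaired by inverting not $f$ but $N_r(f)=\prod_{i=0}^r\phi^i(f)$, so that $L_{p,r}(A_f)$ is still a Zariski-basic localization of $L_{p,r}(A)$ and the affine charts glue. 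This is essentially Borger's argument, and your formula $\delta(f^{-1})=-\delta(f)/(f^p\phi(f))$ is the corrected form of the identity the paper's remark gestures at. Two points carry the weight and deserve to be written out if this were a real proof: first, the exact identification of $N_r(f)$ rests on the fact that $(a_0,\dots,a_r)\in W_{p,r}(C)^\times$ if and only if all ghost components $w_0,\dots,w_r$ are units (this is what converts ``$f$ maps to a unit of $W_{p,r}(C)$'' into ``$f,\phi(f),\dots,\phi^r(f)$ map to units of $C$''; note the kernel of $W_{p,r}(C)\to C$ is \emph{not} contained in the Jacobson radical in general, so ``$a_0$ a unit'' does not suffice); second, the converse direction of the localization claim requires checking that the formula for $\delta(a/f^n)$ is independent of the chosen representative, which is the actual ``hard work'' behind the paper's remark. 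Granting these, your reduction to the standard gluing criterion (Zariski sheaf covered by representable open subfunctors, using that $W_{p,r}$ sends Zariski covers to Zariski covers) is the correct and standard endgame.
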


\begin{rem}
\begin{enumerate}
\item The functors we denote as $\J^r$ have been denoted as $W_{r*}$ by Borger in \cite{Borger2011}. 
\item In \cite{Buium1996} Buium proved that the functors $X \mapsto \J^{r}(X)_n$ are representable for every $n\geq 0$. 
Buium simply denotes these functors as $J^r(-)$.
\end{enumerate}
\end{rem}

It is important to know that local sections of the map $J_p^1(X)_n \to X_n$ correspond to local lifts of the Frobenius. 

\subsection{Statement of main result}

For $X \subset \PP^n_R$ a curve we will denote by condition $(*)$ the following
\begin{equation}
g(X) \geq 2  \mbox{ and } p> \deg(X \subset \PP^n_R).
\end{equation}

\begin{thm}[$\exists$ Lift of torsor of lifts of the Frobenius]
Let $X\subset \PP^n_R$ a smooth projective curve. 
If $X$ satisfies $(*)$ then there exists a system of ``canonically lifted'' $\OO_{X_n}$-modules $(\FT_{X_n})_{n\geq 0}$  such that 
the collection $(\J^{1}(X)_n)_{n\geq 0}$ form a system of torsors under $(\FT_{X_n})_{n \geq 0}$ compatible with the known structure in characteristic $p$.
For each $n\geq 0$ the torsor structure for $\J^1(X)_{n+1}$ under $\FT_{X_{n+1}}$ lifts the previous.
\end{thm}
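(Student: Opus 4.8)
The plan is to build the line bundles $\FT_{X_n}$ and the torsor structures by gluing affine-local models obtained from local lifts of the Frobenius, and to use the numerical hypothesis $(*)$ to make this gluing canonical and to verify the torsor axioms; compatibility with the mod $p$ picture and with the reductions will be built in from the start.

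\emph{Local models.} Cover $X$ by finitely many affine opens $U_i=\Spec B_i$ with $B_i$ smooth over $R$. Since smooth $R$-algebras are formally smooth and $R$ carries its unique lift of the Frobenius, we may choose lifts of the Frobenius $\phi_i\colon \widehat{B_i}\to \widehat{B_i}$, equivalently $p$-derivations $\delta_i$. Reducing modulo $p^{n+1}$ these give sections $s_i^{(n)}$ of $\J^1(X)_n\to X_n$ over $U_{i,n}$, compatible as $n$ varies. In particular $\J^1(X)_n\to X_n$ is smooth of relative dimension one and has Zariski-local sections, so once its relative tangent sheaf is identified with a line bundle, $\J^1(X)_n$ is automatically a Zariski-locally trivial torsor under that bundle; the content is to pin this bundle down as a canonical lift of $\FT_{X_0}$.

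\emph{The canonical lift $\FT_{X_n}$.} I would compute the relative tangent sheaf $T_{\J^1(X)_n/X_n}$ from the functor of points $\J^1(X)_n(A)=X(W_{p,1}(A))$ by deforming along $A[\epsilon]$ with $\epsilon^2=0$: the fibre directions are governed by the sheaf $V_p(W_{p,1}(\OO_{X_n}))\cong\OO_{X_n}$, but with $\OO_{X_n}$ acting through the rule $a\cdot v=a^p v$, a module structure that over $\ZZ/p^{n+1}$ is genuinely twisted (it is not additive, hence not merely a Frobenius twist). A choice of local lift of the Frobenius $\phi_i$ rectifies this to the honest line bundle $\phi_i^{*}T_{X_n}|_{U_{i,n}}$ (pullback of the tangent sheaf along the ring map $\phi_i$), and on overlaps $U_{ij}$ the two resulting identifications differ by multiplication by an explicit unit of $\OO(U_{ij})_n$ built from $\phi_i,\phi_j$ and the cocycle $\delta_i-\delta_j$, congruent to $1$ modulo $p$. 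One checks the cocycle relation for these units and glues, obtaining a line bundle $\FT_{X_n}$ on $X_n$ reducing mod $p$ to $\FT_{X_0}$ and compatible along $X_n\inclusion X_{n+1}$. Independence of the choices $(\phi_i)$ is where $(*)$ enters: since $\deg\FT_{X_0}=p(2-2g)$ and $\deg(X\subset\PP^n_R)<p$ with $g\geq 2$, the small twists of $\FT_{X_0}$ still have negative degree, so vanishings such as $H^0(X_0,\FT_{X_0}\otimes\OO_{X_0}(1))=0$ hold; these, with $H^2(X_0,\OO_{X_0})=0$ and the $p$-integrality of the comparison data forced by $p>\deg(X\subset\PP^n_R)$, show that two systems of choices give canonically identified bundles and that the eventual torsor has no nontrivial automorphisms. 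The lift is genuinely non-naive: because $H^1(X_0,\OO_{X_0})$ is $g$-dimensional, $\FT_{X_0}$ has many lifts to $X_n$, and even $\phi_i^{*}T_{X_n}$ depends on the choice of $\phi_i$.

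\emph{Torsor structure, compatibilities, and the main obstacle.} With $\FT_{X_n}$ in hand, define the action $\J^1(X)_n\times_{X_n}\FT_{X_n}\to\J^1(X)_n$ locally by translating $s_i^{(n)}$ by a section of $\phi_i^{*}T_{X_n}$ under the identification above; the cocycle check makes this independent of $i$, hence global. Freeness and transitivity on fibres can be checked after the faithfully flat base change to the charts, where they reduce to the statement that over a fixed base point two lifts of the Frobenius differ by a unique section of $\FT_{X_n}$, using relative dimension one and the local sections of the first step. Reducing this torsor structure modulo $p^{n}$ returns that of $\J^1(X)_{n-1}$ and modulo $p$ the classical structure on $\J^1(X)_0$, both immediately, since everything is assembled from a single $p$-adic gluing datum. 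The main obstacle is the middle step: over $\ZZ/p^{n+1}$ the difference of two local lifts of the Frobenius is \emph{not} a Frobenius derivation --- its Leibniz defect is $p(\delta_i\delta_i-\delta_j\delta_j)$, nonzero as soon as $n\geq1$ --- so, unlike the classical Kodaira--Spencer situation and unlike the mod $p$ case, there is neither a naive ambient vector bundle for the pseudo-torsor nor a naive lift of $\FT_{X_0}$ to $X_n$. The real work is to show that the Witt-twisted relative tangent sheaf is nonetheless canonically an honest line bundle on $X_n$, absorbing the additivity defect into its \emph{very definition}, and that the hypotheses $g\geq 2$ and $p>\deg(X\subset\PP^n_R)$ --- through the negativity of $\deg\FT_{X_0}$ and of its small twists --- are precisely what make this canonical lift both exist and be unique.
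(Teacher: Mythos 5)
Your proposal correctly identifies the central difficulty --- that for $n\geq 1$ the difference of two local lifts of the Frobenius is not a Frobenius derivation, so there is no naive ambient line bundle --- but it does not supply a mechanism that overcomes it, and the step you lean on instead is false. The assertion that once the relative tangent sheaf of $\J^1(X)_n\to X_n$ is identified with a line bundle the space is ``automatically a Zariski-locally trivial torsor under that bundle'' does not hold: an $\AA^1$-bundle is a torsor under a line bundle if and only if its transition functions can be taken affine-linear in the fibre coordinate (this is exactly Lemma \ref{rem: classes}), and here the transition functions are genuinely nonlinear. Explicitly, for two plane projections the fibre coordinates are related by $\ydot = a_0 + a_1\xdot + pa_2\xdot^2 + \cdots + p^{n-1}a_{n+1}\xdot^{n+1} \bmod p^{n+1}$ (Lemma \ref{lem:non-vanishing criteria}); the quadratic and higher terms are precisely the additivity defect you flag, and they are the obstruction to the torsor structure, not a feature that can be ``absorbed into the very definition'' of the bundle. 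Likewise your proposed verification of freeness and transitivity (``two lifts of the Frobenius differ by a unique section of $\FT_{X_n}$'') is the statement being proved, and it fails until the nonlinear terms have been removed.

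The paper's actual mechanism is a two-stage reduction of the structure group of the $\AA^1$-bundle, and the hypotheses $(*)$ enter quite differently from how you use them. First, $p>\deg(X\subset\PP^n_R)$ is used in the local plane-curve computations to guarantee that for a projection $V(f)$ of $X$ at least one of $f_x,f_y$ is nonzero mod $p$, so that one can solve for $\ydot$ in terms of $\xdot$ and land in the bounded-degree, $p$-adically weighted group $A_n=\{a_0+a_1T+pa_2T^2+\cdots+p^{n-1}a_nT^n\}$ (Lemmas \ref{lem:non-vanishing criteria}, \ref{lem: injectivity}, Theorem \ref{an existence}); it is not about ``$p$-integrality of comparison data'' or canonicity of choices. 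Second, $g\geq 2$ is used to kill the higher-degree terms inductively: the top coefficient of a transition map in $M_{n,r}$ defines a group cocycle $\tau_r$ whose associated \v{C}ech class lives in $H^1(X_0,L_0^{*\otimes(r-1)})$ for $L_0=\FT_{X_0}$ of degree $p(2-2g)<0$, and the vanishing of this $H^1$ (Serre duality / Riemann--Roch) provides the conjugating automorphisms $\psi_i=T-p^na_iT^r$ that lower the degree (Theorem \ref{reduction}). Only after this reduction to $\AL_1$ is the line bundle $\FT_{X_n}$ even \emph{defined}, as the image of the $\AL_1$-cocycle in $\Pic(X_n)$; it is an output of the structure-group reduction, not an input constructed from $\phi_i^{*}T_{X_n}$ and then acted with. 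Your vanishings $H^0(X_0,\FT_{X_0}\otimes\OO_{X_0}(1))=0$ and $H^2(X_0,\OO_{X_0})=0$ play no role in this argument and would not substitute for the $H^1$ vanishing that actually drives the induction.
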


For each $n\geq 0$ we define the \textbf{Deligne-Illusie} class
$$ \DI_n(X_n) \in H^1(X_n, \FT_{X_n})$$ 
to be the cohomology class associated to the torsor structure.

\begin{rem}
The classes $\DI_0(X)$ are known to exist for smooth $X/R$ of arbitrary dimension. 
The lifted classes are known to exists for abelian varieties.
Buium referes to these classes in \cite{Buium1995} and \cite{Buium2005} as Arithmetic Kodaira-Spencer classes and denotes them with $\KS$ instead of $\DI$.
See \cite{Buium2005}, Definition 3.10  for Deligne-Illusie classes for varieties in characteristic $p$ and \cite{Buium2005}, Definition 8.50 for a variant for Abelian varieties (which can also be constructed in characteristic zero).

In \cite[Lemma 4.4]{Buium1995}, Buium relates $\DI_0(A)$ of an abelian variety (denoted $\rho^{int}$ there) to $\KS(A_1/R_1)$ (denoted $\rho^{ext}$ and viewed as a map).
He proves 
$$\DI_0(A/R)=F^*\KS( [ \delta(t(A)) \mod p ]^{1/p} ),$$ 
where $F$ denotes the absolute Frobenius,
$t: R[[t_{ij}: 1\leq i, j \leq \dim_R(A) ]] \to R$ is the Serre-Tate classifying map for $A$ with image $t(A)$ 
and the bar denote reduction modulo $p$.
We refer to \cite{Buium1995} for more details.

After pairing a Deligne-Illusie class with elements of its Serre dual one can obtain arithmetic differential equations in the coefficients of the variety which is zero precisely when the variety admits a lifts of the Frobenius.
In the case that the variety under consideration is an elliptic curve, the resulting differential equation is a differential modular form (in the sense of Buium) which cuts out canonical lift on modular curves. 
See \cite[Section 3.9]{Buium2009} for an appearence in an application and \cite{Buium2000} for more on differential modular forms.

\end{rem}

\begin{defn}
The category of $\Lambda_p$-\textbf{Schemes} $\Sch^{\Lambda_p}_R$ (resp $\Sch^{\Lambda_p}_{R_n}$) is defined by
\begin{description}
\item[Objects] Schemes $X/R$ (resp $X_n/R_n$) with a lifts of the Frobenius on $R$ (resp $R_n$)
\item[Morphisms] Morphisms of schemes over $R_n$ equivariant with respect to the Frobeniuses.
\end{description}
\end{defn}
For $X' \in \Sch^{\Lambda_p}_{R}$ (resp $\Sch^{\Lambda_p}_{R_n}$) we will let $- \otimes_{\Lambda_p} R: \Sch^{\Lambda_p}_R \to \Sch_R$ denote the forgetful functor (resp $- \otimes_{\Lambda_p}R_n$).

\begin{cor}\label{thm: arithdescent}
The following are equivalent
\begin{enumerate}
\item $\DI_n(X_n) =0$ in $H^1(X_n, \FT_{X_n})$
\item $J_p^1(X)_n \cong \FT_{X_n}$ as a torsors under $\FT_{X_n}$.
\item  $X_n/R_n$ descends to the category of $\Lambda_p$-schemes: 
There exists some $X_n' \in \Sch^{\Lambda_p}_{R_n}$ such that $X_n' \otimes_{\Lambda_p} R_n = X_n$. \footnote{ This is just a fancy notation for saying that $X_n$ admits a lift of the Frobenius.}
\end{enumerate}
\end{cor}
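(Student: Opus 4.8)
The plan is to deduce the corollary formally from the main theorem together with the classical dictionary relating torsors under a sheaf of abelian groups to first cohomology; the hypothesis $(*)$ is retained throughout, so that for every $n$ the main theorem applies and the projection $\J^1(X)_n \to X_n$ is a torsor under the $\OO_{X_n}$-module (line bundle) $\FT_{X_n}$, with $\DI_n(X_n) \in H^1(X_n,\FT_{X_n})$ being, by definition, the class of this torsor.

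For the equivalence $(1)\Leftrightarrow(2)$ I would invoke the standard fact that, on the projective curve $X_n$ (where a finite affine cover is available and $\FT_{X_n}$ is in particular a sheaf of abelian groups), isomorphism classes of $\FT_{X_n}$-torsors are classified by the pointed set $H^1(X_n,\FT_{X_n})$, the distinguished point being the class of the trivial torsor $\FT_{X_n}$ acting on itself by translation. It then suffices to observe that the \v{C}ech cocycle $[\delta_i-\delta_j]$ defining $\DI_n(X_n)$ is precisely the transition cocycle of the torsor $\J^1(X)_n$ relative to the chosen affine cover and to the local trivializations furnished by the local lifts of the Frobenius $\delta_i$ --- which is immediate from the description of the torsor structure supplied by the main theorem. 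Hence $\DI_n(X_n)=0$ if and only if $\J^1(X)_n$ is isomorphic as an $\FT_{X_n}$-torsor to $\FT_{X_n}$, i.e.\ $(2)$ holds.

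For $(2)\Leftrightarrow(3)$ the only input beyond torsor formalism is the functor-of-points description $\J^1(X)(A)=X(W_{p,1}(A))$, together with the canonical ring map $R\to W_{p,1}(R)$, $r\mapsto(r,\delta_p(r))$ with $\delta_p(r)=(\phi(r)-r^p)/p$, coming from the $\Lambda_p$-structure on $R=W_{p,\infty}(k)$, which is what equips $\J^1(X)$ with its $R$-structure. Since a torsor under an abelian sheaf is trivial exactly when it admits a global section, $(2)$ is equivalent to the existence of a section of $\J^1(X)_n\to X_n$ over $X_n$; reading such a section through the identifications above, and using that local sections of this map are local lifts of the Frobenius, one finds that a global section is precisely a lift of the Frobenius on $X_n$ lying over the fixed lift $\phi$ on $R_n$. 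That datum is in turn exactly an object $X_n'\in\Sch^{\Lambda_p}_{R_n}$ whose underlying $R_n$-scheme is $X_n$, i.e.\ a descent $X_n'\otimes_{\Lambda_p}R_n=X_n$, the Frobenius lift being the only extra structure on $X_n'$ relative to $X_n$. Concatenating the two equivalences yields the corollary, visibly the arithmetic shadow of Theorem~\ref{thm: geomdescent} with the field of constants $K^D$ replaced by the fixed ring of $\phi$.

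The argument is essentially formal once the main theorem is granted; the one genuinely non-vacuous point to pin down is this last identification --- that a morphism of $X_n$-schemes $X_n\to\J^1(X)_n$ is the same thing as a globally defined lift of the Frobenius on $X_n$ compatible with $(R_n,\phi)$ --- which rests on the representability of $\J^1$ and on the compatibility of the Witt-vector functor $W_{p,1}$ with the canonical $\Lambda_p$-structure of $R$. Everything else (local triviality of the torsor, the $H^1$-classification, and ``admits a section $\Leftrightarrow$ trivial'') is the standard theory of torsors under a sheaf of abelian groups and requires no new argument.
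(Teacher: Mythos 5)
Your argument is correct and is essentially the argument the paper intends: the paper states this corollary without a separate proof, treating it as an immediate consequence of the main theorem via the standard dictionary between torsors under an abelian sheaf and $H^1$ (trivial class $\Leftrightarrow$ trivial torsor $\Leftrightarrow$ existence of a global section), combined with the identification of sections of $\J^1(X)_n \to X_n$ with lifts of the Frobenius, exactly as you spell out. The only caveat is a minor one inherited from the paper itself: by its own conventions ($\J^1(X_{n+1}) = \J^1(X)_n$, and a $p$-derivation $\OO(X_{n+1}) \to \OO(X_n)$ corresponding to a Frobenius lift one level up) there is an index shift in statement (3) that both you and the paper's footnote gloss over, but this does not affect the substance of the equivalences.
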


\begin{rem}
\begin{enumerate}
\item Compare the statement to theorem \ref{thm: geomdescent}.
\item When $R = W_{p,\infty}(\FFbar_p)$ we have 
 $$R^{\delta_p} = \lbrace c \in R: \delta_p(c) =0 \rbrace  = \lbrace \zeta : \zeta^n =1, p\nmid n \rbrace \cup \lbrace 0 \rbrace $$
which is a monoid of roots of unity. 
It is unclear if there exists an interpretation of descent in algebro-geometric Categories from say \cite{Lorscheid2012a},\cite{Toen2009} or \cite{Pena2009}.
\item The result of Raynaud \cite{Raynaud1983} show that curves $X/R$ of genus $g\geq 2$ do not have lifts of the Frobenius.  
Hence curves $X/R$ satisfying $(*)$ do not have lifts of the Frobenius and act as ``non isotrivial" in our setting.
\end{enumerate}
\end{rem}

\subsection{Remarks on the proof}
Let $R$ be an arbitrary commutative ring.
A morphism of schemes $\pi: E\to X$ is called an $\AA^1_R$-\textbf{bundle} if there exists an open cover $(U_i \to X)_{i\in I}$ and isomorphisms $\psi_i: \pi^{-1}(U_i) \to U_i \times_R \AA^1_R$ which respect the projections down to $U_i$.
A collections of trivializations together with the isomorphisms will be called a \textbf{atlas}.

We will let $\Auts(\AA^1)$ denote the functor which associates to a scheme $U$ the opposite group of automorphisms of $\OO(U)[t]$ which we view as groups of polynomials under composition with coeffients in $\OO(U)$.
When we restrict $\Auts(\AA^1)$ to open subsets of a scheme it becomes a sheaf of groups.
 
Let $G \leq \Auts(\AA^1)$ be a subgroup and $\pi: E\to X$ an $\AA^1_R$ bundle. 
A $G$-\textbf{atlas} will be an atlas $(\psi, U_i \to X)_{i\in I}$ such that for all $i,j \in I$ we have $\psi_{ij} \in G(U_{ij})$.
A $G$-\textbf{structure} will be a maximal $G$-atlas. 
When this happens we call $G$ the \textbf{structure group} of the bundle.

Let $G$ and $H$ be subgroups of $\Auts(\AA^1)$. 
Let $\psitilde_{ij},\psi_{ij} \in G(U_{ij})$ be cocycles with respect to some cover $(U_i \to X)_{i\in I}$.
We say that $\psi_{ij}$ and $\psitilde_{ij}$ are $H$-\textbf{compatible} if there exists a collection of $\psi_i \in H(U_i)$ for $i\in I$ such that $\psi_i \psi_{ij} = \psitilde_{ij}\psi_j$.
When this is the case we write $\psi_{ij} \sim_H \psitilde_{ij}$.

Let $\AL_1$ denote the $a+bT$ subgroup of $\Auts(\AA^1)$.
\begin{lem}\label{rem: classes}
Let $\pi: E \to X$ be an $\AA^1_R$-bundle.  
The bundle $E\to X$ is a torsor under some line bundle $L\to X$ if and only if $\pi$ admits an $\AL_1$-structure.
\end{lem}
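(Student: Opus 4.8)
The plan is to unwind both directions of the equivalence through the dictionary between $\AA^1_R$-bundles with structure group and \v{C}ech cocycles valued in the corresponding sheaf of groups $G \leq \Auts(\AA^1)$. Throughout, fix a cover $(U_i \to X)_{i\in I}$ trivializing $\pi$, so that $E \to X$ is described by a cocycle $(\psi_{ij})$ with $\psi_{ij} \in \Auts(\AA^1)(U_{ij})$, and recall that $\AL_1(U) \subset \Auts(\AA^1)(U)$ consists of the polynomials $t \mapsto a + bt$ with $a \in \OO(U)$, $b \in \OO(U)^\times$. The key structural remark is that $\AL_1$ sits in a split short exact sequence of sheaves of groups
$$
1 \to \mathbf{G}_a \to \AL_1 \to \mathbf{G}_m \to 1,
$$
where $\mathbf{G}_a$ is the normal subgroup of translations $t \mapsto a+t$ and the quotient map sends $t \mapsto a + bt$ to $b$. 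A line bundle $L \to X$ is the same thing as an element of $H^1(X, \mathbf{G}_m)$, i.e. a $\mathbf{G}_m$-valued cocycle $(b_{ij})$ up to coboundary; and an $L$-torsor is, in the notation of the paper, precisely an $\AA^1_R$-bundle admitting an atlas whose transition maps are affine-linear $t \mapsto a_{ij} + b_{ij} t$ with the \emph{linear parts} $b_{ij}$ forming a cocycle representing the class of $L$. This is exactly the data of an $\AL_1$-atlas together with a bookkeeping of which $\mathbf{G}_m$-class the linear parts define.

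For the ``only if'' direction, suppose $E \to X$ is a torsor under a line bundle $L$. Trivialize $L$ over a (common refinement of a) cover, giving transition functions $b_{ij} \in \OO(U_{ij})^\times$ with $b_{ij}b_{jk} = b_{ik}$. Over each $U_i$ the torsor $E|_{U_i}$ is a torsor under the trivial line bundle $\OO_{U_i}$, hence (Zariski-locally, refining if necessary) trivial as such a torsor, i.e. isomorphic to $U_i \times_R \AA^1_R$ compatibly with the additive action; comparing two such trivializations on $U_{ij}$, the gluing map must be an automorphism of $\AA^1_{U_{ij}}$ commuting with the $\mathbf{G}_a$-action up to the twist by $b_{ij}$, which forces it to have the form $t \mapsto a_{ij} + b_{ij}t$ for some $a_{ij} \in \OO(U_{ij})$. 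Thus $\psi_{ij} \in \AL_1(U_{ij})$ and we have produced an $\AL_1$-atlas; taking the maximal such atlas containing it gives an $\AL_1$-structure.

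For the ``if'' direction, suppose $\pi$ admits an $\AL_1$-structure, with transition maps $\psi_{ij}: t \mapsto a_{ij} + b_{ij} t$. The linear parts $(b_{ij})$ satisfy the cocycle condition $b_{ij}b_{jk} = b_{ik}$ (this is just the $\mathbf{G}_m$-component of the cocycle identity for $\psi_{ij}$, using the exact sequence above), so they define a line bundle $L \to X$ with $L|_{U_i} \cong \OO_{U_i}$ via transition functions $b_{ij}$. Define a $\mathbf{G}_a$-action of $L$ on $E$ locally by letting $\OO(U_i)$ act on $\pi^{-1}(U_i) \cong U_i \times_R \AA^1_R$ by translation on the $t$-coordinate; the identity $\psi_{ij}(t + s) = a_{ij} + b_{ij}t + b_{ij}s = \psi_{ij}(t) + b_{ij} s$ shows exactly that this local action glues into a global action of $L$ on $E$, and it is free and transitive on fibers because translation is. Hence $E \to X$ is an $L$-torsor. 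I expect the main technical obstacle to be the first direction: one must argue carefully that a torsor under the \emph{trivial} line bundle over an affine open is Zariski-locally trivial as a torsor (so that local sections exist after refinement) and that the only $\AA^1$-automorphisms intertwining the additive actions up to the scalar $b_{ij}$ are the affine-linear ones — a small computation with $\Auts(\AA^1)$, but the only place where one genuinely uses that $E$ is a \emph{torsor} and not merely an abstract $\AA^1$-bundle. The other direction is essentially formal once the exact sequence $1 \to \mathbf{G}_a \to \AL_1 \to \mathbf{G}_m \to 1$ is in hand.
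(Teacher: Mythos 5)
Your proposal is correct and follows essentially the same route as the paper, whose own proof is only a two-sentence sketch ("clear" in one direction, "impose the obvious torsor structure and check well-definedness" in the other) of exactly the cocycle dictionary you spell out. The details you supply — the sequence $1 \to \mathbf{G}_a \to \AL_1 \to \mathbf{G}_m \to 1$, local triviality of an $\OO_{U_i}$-torsor over an affine via vanishing of $H^1$, and the rigidity computation $\phi(t+s)=\phi(t)+b_{ij}s \Rightarrow \phi(t)=\phi(0)+b_{ij}t$ — are precisely what the paper leaves implicit.
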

\begin{proof}
It is clear that an $\AL_1$-structure induces a line bundle under a torsor. 
The converse can be seen by considering the functor of points on $E$, imposing the obvious torsor structure on $E$ and checking that the definition is well-defined.
\end{proof}

\begin{rem}\label{classes}
The natural projection $\AL_1 = \OO_{X_n} \rtimes \OO_{X_n}^{\times} \to \OO^{\times}_{X_n}$ induces a map $H^1(X_n,\AL_1) \to H^1(X_n, \OO^{\times}) = \Pic(X_n)$.

If $\sigma_n \in H^1(X_n, \AL_1)$ is the class associated to the torsor on $J_p^1(X)_n$ then 
the image of that class under the natural map is the class of the lifted Frobenius tangent sheaf $[\FT_{X_n}] \in \Pic(X_n).$  
\end{rem}

To prove that $\J^1(X)_n$ has the structure of a torsor under $\FT_{X_n}$ it suffices to show that $J_{p}^1(X)_n$ admits an $\AL_1$-structure.
Here are the following reduction steps.
\begin{description}
\item[Step 1] Show that $\J^1(X)_n$ admits the structure of an $\AA^1_{R_n}$-bundle.
\item[Step 2] Show that $\J^1(X)_n$ admits $A_{n}$-structure.  (We will introduce subgroups $A_{n}, A_{n,d} \leq \Auts(\AA^1_{R_n})$ of ``automorphisms of bounded degree'' which play a key roll in the proof.).\footnote{ This step uses the hypothesis $\deg(X)<<p$.}
\item[Step 3] Show by induction on $n$ that $\J^{1}(X)_n$ admits an $A_{n,n}$-structure.
\item[Step 4] Show by induction on $d$ that $\J^{1}(X)_n$ admits an $A_{n,d}$-structure it admits a $A_{n+1,d-1}$ structure for $d\geq 2$. ($A_{n,1} \leq \AL_1(\OO_{X_n} ) )$ \footnote{ This step uses the hypotheses $g(X)\geq 2$.}
\end{description}

The first step is a theorem of Buium (section \ref{sec: step1})
The second step is where most of the work happens:
  we perform some local computations for transition maps for plane curves and extend these results to imply the existence of $A_n$ structures for $n\geq 1$.
This is done in section \ref{sec: an structs}.
The third and fourth steps are done simultaneously in section \ref{sec: step34} and uses a ``pairing'' between group and \u{C}ech cohomology.

Sections \ref{sec: $p$-ders} and \ref{sec: pjets} provide background on $p$-derivations and $p$-jet spaces.

\subsection*{Acknowledgements}
The author is indebted to Alexandru Buium for his guidance and encouragement.
We would also like to thank James Borger for reviewing an earlier version of this manuscript and giving many useful comments and suggestions.
Final preparations of this manuscipt occured at MSRI during the Spring of 2014. 

\section{$p$-derivations}\label{sec: $p$-ders}
The material for this section is standard and can be obtained from (say) \cite{Buium2005} and \cite{Buium1996} and contains no new information.
We provide this introduction here for convenience of the reader.
\subsection{$p$-derivations}
Let $A$ and $B$ be rings, with $B$ an $A$-algebra. A $p$-\textbf{derivation} $\delta_p:A \to B$ is a map of sets satisfying the following axioms
\begin{eqnarray*}
\delta_p(a+b) &=& \delta_p(a) + \delta_p(b) + C_p(a,b)\\
\delta_p(ab) &=& \delta_p(a)b^p + a^p \delta_p(b) + p\delta_p(a)\delta_p(b) \\
\delta_p(1) &=& 0 \\
C_p(x,y) &=& \frac{x^p + y^p - (x+y)^p}{p} \in \ZZ[x,y]
\end{eqnarray*}

The category of rings with $p$-derivations is called the category of $\Lambda_p$-\textbf{rings}. 

\subsection{Examples}
Let $A$ be a ring and $a\in A$. Recall that we have a well-defined morphism
$$ \left[ \frac{1}{a} \right]: aA \to A/\ann(a). $$
Here $\ann(a)$ denotes the annihilator ideal of $a$.
\begin{example}
$\delta: \ZZ/p^2 \to \ZZ/p$ given by $\delta(x) = (x - x^p)/p$
where we interpret $1/p$ as a map
 $$ \frac{1}{p}: p \ZZ/p^2 \to \ZZ/p.$$ 
\end{example}

\begin{example}
 If $R=W_{p,\infty}(k)$ with $k$ perfect of characterisic $p$ then $R$ has a unique lift of the Frobenius $\phi$ on it. It hence has a unique $p$-derivation $\delta(x) = (\phi(x) - x^p)/p$.
\end{example}

\begin{thm}\label{ideals}
 Let $R=W_{p,\infty}(k)$ where $k$ is a perfect field of characteristic $p$.
\begin{enumerate}
\item $\delta_p(p^n) = \frac{p^n - p^{np}}{p} = p^{n-1} \cdot \unit $
\item $\delta_p(p^n \cdot \unit )= p^{n-1} \cdot \unit $
\item $(p^n, \delta_p(p^n), \delta_p^2(p^n), \ldots, \delta^r(p^n))_R = (p^{n-r})_R$
\end{enumerate} 
\end{thm}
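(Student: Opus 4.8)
The plan is to reduce all three parts to the explicit formula $\delta_p(x)=(\phi(x)-x^p)/p$ for the unique $p$-derivation of $R$, using two structural facts: $R$ is a $p$-torsion-free discrete valuation ring with uniformizer $p$, and the canonical Frobenius lift $\phi$ fixes $p$ (it is constant on $\ZZ_p\ni p$). Throughout ``unit'' means a unit of $R$, and I will repeatedly use that in a DVR a unit plus an element of $pR$ is again a unit and that $(p^{a_1},\dots,p^{a_k})_R=(p^{\min_i a_i})_R$. Part (1) is then immediate: $\phi(p^n)=\phi(p)^n=p^n$ gives $\delta_p(p^n)=(p^n-p^{np})/p=p^{n-1}(1-p^{n(p-1)})$, and $1-p^{n(p-1)}\equiv 1\pmod p$ is a unit for $n\ge1$ (for $n=0$ the statement degenerates, since $\delta_p(1)=0$).

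For (2) I would expand $\delta_p(p^n u)$, with $u$ a unit, by the Leibniz-type rule $\delta_p(ab)=\delta_p(a)b^p+a^p\delta_p(b)+p\,\delta_p(a)\delta_p(b)$ with $a=p^n$, $b=u$, and substitute $\delta_p(p^n)=p^{n-1}v$ from (1) with $v$ a unit. Collecting terms gives $\delta_p(p^n u)=p^{n-1}(vu^p+pw)$ for some $w\in R$, since the remaining contributions $p^{np}\delta_p(u)$ and $p^{n}v\,\delta_p(u)$ each carry at least one power of $p$ beyond $p^{n-1}$ (using $n\ge1$ and $p\ge2$). As $vu^p$ is a unit, so is $vu^p+pw$, whence $\delta_p(p^n u)=p^{n-1}\cdot(\text{unit})$.

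For (3) I would iterate (1)--(2): induction on $j$ shows $\delta_p^{\,j}(p^n)=p^{\,n-j}\cdot(\text{unit})$ for $0\le j\le n$, the inductive step applying (2) to $\delta_p^{\,j-1}(p^n)=p^{\,n-j+1}\cdot(\text{unit})$, which is legitimate as long as $n-j+1\ge1$. Therefore, for $r\le n$,
\[
\bigl(p^n,\delta_p(p^n),\dots,\delta_p^{\,r}(p^n)\bigr)_R=\bigl(p^n,p^{n-1},\dots,p^{n-r}\bigr)_R=(p^{n-r})_R,
\]
the last equality because $p^{n-r}$ has least valuation among the generators; for $r\ge n$ one lands in the unit ideal, matching the convention $p^m=1$ for $m\le0$.

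I do not anticipate a real obstacle: the arithmetic is elementary, and the only delicate point is (2), where one must check both that the error terms in the Leibniz expansion do not drop the $p$-adic valuation below $n-1$ and that the leading term has valuation exactly $n-1$ — but this is precisely the content of $R$ being a DVR uniformized by $p$. Everything else is bookkeeping of the unit factors.
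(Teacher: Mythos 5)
Your proposal is correct and follows essentially the same route as the paper: part (1) by the explicit formula $\delta_p(p^n)=(p^n-p^{np})/p$, part (2) by the Leibniz rule for $p$-derivations applied to $p^n\cdot u$ and observing the error terms carry extra powers of $p$, and part (3) by induction using (2) to get $\delta_p^{\,j}(p^n)=p^{\,n-j}\cdot(\text{unit})$. Your treatment is in fact slightly more careful than the paper's (explicitly handling the unit bookkeeping, the edge cases $n=0$ and $r\ge n$, and the final passage from valuations of generators to the ideal equality), but the underlying argument is identical.
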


\begin{proof}
The first property is trivial.
The second property follows from the computation
\begin{eqnarray*}
\delta_p(p^s \cdot u) &-& \delta_p(p^s) u^p + p^{sp} \delta_p(u) + p \delta_p(p^s) \delta_p(u) \\
&=& p^{s-1}\cdot u^p + p^{sp} \delta_p(u) + p^{s}\cdot \unit \cdot \delta_p(u) \\
&=& p^{s-1}( \unit + p  \cdot \mathrm{ junk }).
\end{eqnarray*}

We prove the last property by induction on $r$. 
It is sufficient to show that $\delta_p^r(p^n) = p^{n-r} \cdot \unit$. 
We have
$$ \delta_p(\delta_p^{r-1}(p^n)) = \delta_p( p^{n-r+1} \cdot \unit ) = p^{n-r} \cdot \unit,$$ 
where the first equality follows from inductive hypothesis and the second equality follows from the second proposition.
\end{proof}

\subsection{First $p$-jet ring} \label{p-jet}
Define $(-)_{p,1}: \CRing \to \CRing$ by 
 $$ A_{p,1} = A[\dot{a}: a \in A]/(\mbox{relations})$$
where $(\mbox{relations})$ are generated by
\begin{eqnarray}
 \dot{(ab+c)} &=& \dot{a} b^p + a^p \dot{b} + p \dot{a} \dot{b} + \dot(c) + C_p(ab,c),\label{linear} \\
 C_p(x,y) &=& \frac{x^p + y^p - (x+y)^p}{p} \in \ZZ[x,y],
\end{eqnarray}
For all $a,b ,c \in A$.

\begin{rem}
Let $R = W_{p,\infty}(k)$ where $k \subset \FFpbar$. If $A$ is an $R$-algebra and $R$ admits multiple $p$-derivations we may want to impose that the $p$-derivation on $A$ extend the one on the base. 
Suppose $\delta_0: R\to R$ is such a $p$-derivation on the base. 
The additional relation we impose is then $\dot{r} = \delta_0(r)$ where of course these are understood to be taken as an image in $A$.

Since we will work modulo $p$th powers or $p$-formal setting in this paper, this will not matter.
\end{rem}

\begin{example}
$A/R$ is finite type,
$$ A = R[x_1,\ldots, x_n]/(f_1,\ldots,f_r) = R[x]/(f)$$
where $x = (x_1,\ldots, x_n)$ , $f = (f_1,\ldots,f_r)$ then
$$ A_{p,1} = R[x,\dot x]/(f, \dot f) $$
where $\dot x = (\dot x_1, \ldots, \dot x_n)$ and $\dot f = (\dot f_1, \ldots, \dot f_r)$. 
Here $\dot{(f_1)},\ldots, \dot{(f_r)} \in R[x,\dot x]$ are computed using the rule for linear combinations above (\ref{linear}).
\end{example}

\begin{thm}[Universal Property] \label{up affine}
There is a universal $p$-derivation $\delta_{p,1}: A \to A_{p,1}$ mapping $a$ to $\dot a$. 
It satisfies the following universal property:

For every $p$-derivation $\delta: A\to B$ of the ring homomorphism $A \to B$ there exists a unique ring homomorphism $u_{\delta}: A_{p,1} \to B$ such that
$$\xymatrix{
A\ar[r]^{\delta} \ar[dr]_{\delta_{p,1}} & B \\
& A_{p,1} \ar[u]_{u_{\delta}} 
}.  $$
The ring homomorphism is the morphism of $A$-algebras defined by $u_{\delta}(\dot{a}) = \delta(a)$. \footnote{ Warning: The diagram is not a diagram in the categorical sense but it is an exercise to show that the universal property can be formulated in terms of diagrams}
\begin{proof}
It is clear the the morphism is well-defined from the definitions in section \ref{p-jets}.
We leave it as an exercise to check the universality.
\end{proof} 
\end{thm}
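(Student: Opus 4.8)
The statement bundles two claims: that $\delta_{p,1}\colon a\mapsto\dot a$ is genuinely a $p$-derivation into $A_{p,1}$, and that it is initial among $p$-derivations out of $A$. I would handle the first claim by unwinding the presentation: $A_{p,1}$ is the polynomial $A$-algebra on symbols $\{\dot a:a\in A\}$ modulo the ideal $I$ generated by the elements
$$ r_{a,b,c}\;=\;\dot{(ab+c)}-\dot a\,b^p-a^p\,\dot b-p\,\dot a\,\dot b-\dot c-C_p(ab,c)\qquad(a,b,c\in A), $$
so relation \eqref{linear} holds identically in $A_{p,1}$. Specializing $b=1$ (and using $\dot 1=0$ and $C_p(a,c)=C_p(a\cdot 1,c)$) gives the additivity axiom $\delta_{p,1}(a+c)=\dot a+\dot c+C_p(a,c)$; specializing $c=0$ (and using $\dot 0=0$ and $C_p(x,0)=0$) gives the multiplicative axiom $\delta_{p,1}(ab)=\dot a\,b^p+a^p\,\dot b+p\,\dot a\,\dot b$; and $\delta_{p,1}(1)=\dot 1=0$. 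Hence $\delta_{p,1}\in\pDer(A\to A_{p,1})$.

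For universality I would argue as follows. Fix a $p$-derivation $\delta\colon A\to B$ of the ring homomorphism $A\to B$. Uniqueness of $u_\delta$ is formal: $A_{p,1}$ is generated as an $A$-algebra by the $\dot a$, so an $A$-algebra homomorphism is pinned down by its values on these, and the constraint $u_\delta\circ\delta_{p,1}=\delta$ forces $u_\delta(\dot a)=\delta(a)$. For existence, let $\tilde u\colon A[\dot a:a\in A]\to B$ be the $A$-algebra homomorphism (with respect to the structure map underlying $\delta$) determined by $\dot a\mapsto\delta(a)$. One needs $\tilde u(I)=0$, i.e. $\tilde u(r_{a,b,c})=0$ for all $a,b,c$, which reads
$$ \delta(ab+c)=\delta(a)\,b^p+a^p\,\delta(b)+p\,\delta(a)\,\delta(b)+\delta(c)+C_p(ab,c). $$
But this is precisely the additivity axiom for $\delta$ applied to the pair $(ab,c)$, followed by the multiplicative axiom applied to $\delta(ab)$. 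So $\tilde u$ descends to a unique $A$-algebra homomorphism $u_\delta\colon A_{p,1}\to B$, and commutativity of the triangle is immediate from $u_\delta(\dot a)=\delta(a)$.

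I do not expect a genuine obstacle here; the only place that rewards care is the first step --- disentangling the three $p$-derivation axioms (and the normalizations $\dot 0=\dot 1=0$) from the single combined relation \eqref{linear}, and checking that the relations ideal $I$ is generated by the $r_{a,b,c}$ so that annihilating these suffices to factor $\tilde u$. The footnote warning --- that the triangle is not literally a diagram in $\CRing$, since $\pDer(A\to B)$ is not a hom-set there --- is harmless for the argument: we use only that $u_\delta$ is an honest ring homomorphism and that $u_\delta\circ\delta_{p,1}=\delta$ as maps of sets. Recasting the whole statement as the adjunction $A\mapsto A_{p,1}$ between $\CRing$ and the category of $\Lambda_p$-rings is then routine.
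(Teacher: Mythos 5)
Your proposal is correct and supplies exactly the routine verification that the paper declines to write out: the paper's own ``proof'' is the two sentences ``clear from the definitions'' and ``we leave it as an exercise,'' so there is no divergence of approach to compare.

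The one point that deserves more than the parenthetical treatment you give it is the normalization $\dot 1=\dot 0=0$. As the paper presents $A_{p,1}$, the relations ideal is generated only by the combined relation (\ref{linear}); the identities $\dot 0=0$ and $\dot 1=0$ are not among the generators, and both of your specializations ($b=1$ for additivity, $c=0$ for the Leibniz rule) use them. What (\ref{linear}) alone yields is $\dot 0\,(a^p+p\dot a)=0$ for all $a$ (take $b=c=0$) and $\dot 1\,(1+p\dot 1)=-\dot 0$ (take $a=b=1$, $c=0$); these force $\dot 0=\dot 1=0$ only after one knows $1+p\dot 1$ is a unit --- automatic in the mod-$p^{n+1}$ and $p$-formal settings the paper actually works in, but not in an arbitrary ring. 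The clean fix is to add $\dot 1=0$ to the presentation, which is harmless for universality since every $p$-derivation kills it, exactly as your computation $\tilde u(r_{a,b,c})=0$ (using $\delta(1)=\delta(0)=0$) already shows. With that adjustment your argument --- uniqueness because $A_{p,1}$ is generated over $A$ by the $\dot a$, existence by descending the evaluation map from the free polynomial algebra through the relations ideal --- is complete and is the standard one.
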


\subsection{Data of $p$-derivations}

\begin{lem}[flatness over witt vectors= $p$-torsion free]

Let $A$ be an $R = W_{p,\infty}(k)$ algebra with $k$-perfect of characteristic $p$.
The following are equivalent 
\begin{enumerate}
\item $A$ is flat over $R$
\item The multiplication by $p$ morphism is injective.
\item  $A$ is $p$-torsion free
\end{enumerate} 
\end{lem}

\begin{proof}
It is clear the (2) and (3) are the same. 
We will show $p$-torsion free implies flat.
Flatness is equivalent to $I \otimes_R A \to I A$ given by $i \otimes_R a \mapsto i a$ is injective.
We have $I = m^n$ for some $m$ where $m = (p)$ is the maximal ideal of $R$.
A general element of $m^n \otimes A$ looks like $ \sum_i p^{n'+n_i} \otimes a_i$ with $n'$ be the the gcd of all of the $p^{n'+n_i}$ where we can assume wlog that $a_i$'s are not divisible by any powers of $p$.
Suppose $\sum_i p^{n'+n_i} \otimes a_i \mapsto p^{n'} (\sum_i p^{n_i} a_i) =0$.
Since multplication by $p$ is injective we have $\sum_i p^{n_i} a_i=0$.
This is a contradiction since $\sum_i p^{n_i} a_i$ was cooked up to be a unit.

We will show that flatness implies $p$-torsion free.
We prove the converse by contrapositive: If it is not $p$-torsion free it will not be flat.
Suppose that multiplication by $p$ is not injective on $A$.
This means that the map $pR \otimes_R A \to pA$ is not an injection.
This contradicts flatness.
\end{proof}

\begin{thm}
Let $B\in \CRing_A$, $A\in \CRing_R$ where $R = W_{p,\infty}(k)$ and $k$ is a perfect field of characteristic $p$.
Suppose that $A$ and $B$ are flat over $R$.
The following data are equivalent.

\begin{enumerate}
\item A $p$-derivation $\delta:A \to B$ of the algebra map $A\to B$.
\item An action $\rho: A \to W_{p,1}(B)$ (meaning a morphism of rings such that $(\pi_{p,1})_B \circ g) = f: A \to B$ the algebra map.
\item A morphism of $A$-algebras $A_{p,1} \to B$.
\end{enumerate}
\end{thm}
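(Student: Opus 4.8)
The plan is to read off all three equivalences from the definitions: the only substantive point is that the ring structure on $W_{p,1}(B)$ is arranged precisely so as to encode the $p$-derivation axioms, and that $A_{p,1}$ is by construction the object representing $p$-derivations out of $A$. Flatness plays no role in the argument below; I will comment at the end on why it is nonetheless kept in the hypotheses.

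First I would establish $(1)\Leftrightarrow(3)$, which is essentially a repackaging of Theorem~\ref{up affine}. Given an $A$-algebra homomorphism $u\colon A_{p,1}\to B$, the composite $u\circ\delta_{p,1}\colon A\to B$ is a $p$-derivation of the structure map $A\to B$: for any ring homomorphism $u$ and any $p$-derivation $\delta'$, the composite $u\circ\delta'$ again satisfies the three axioms, since the correction term $C_p(a,b)$ and the term $p\,\delta'(a)\delta'(b)$ are polynomial expressions with integer coefficients that $u$ respects. Conversely, a $p$-derivation $\delta\colon A\to B$ of $A\to B$ produces, by the universal property in Theorem~\ref{up affine}, a unique $A$-algebra homomorphism $u_\delta\colon A_{p,1}\to B$ with $u_\delta(\dot a)=\delta(a)$; the uniqueness clause there shows these two assignments are mutually inverse.

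Next I would treat $(1)\Leftrightarrow(2)$. Write $f\colon A\to B$ for the structure map and $(\pi_{p,1})_B\colon W_{p,1}(B)\to B$, $(x_0,x_1)\mapsto x_0$, for the canonical projection. Given a $p$-derivation $\delta\colon A\to B$ of $f$, set $\rho\colon A\to W_{p,1}(B)$, $\rho(a)=(f(a),\delta(a))$. Using the explicit addition and multiplication laws on $W_{p,1}(B)$ recalled in the introduction, one checks directly that $\rho(a+b)=\rho(a)+\rho(b)$ is equivalent to $\delta(a+b)=\delta(a)+\delta(b)+C_p(a,b)$ --- here one uses that $f$ is additive and commutes with the integer-coefficient polynomial $C_p$, so $f(C_p(a,b))=C_p(f(a),f(b))$ --- that $\rho(ab)=\rho(a)\rho(b)$ is equivalent to the product axiom for $\delta$, and that $\rho(1)=(1,0)$, the unit of $W_{p,1}(B)$, is equivalent to $\delta(1)=0$; moreover $(\pi_{p,1})_B\circ\rho=f$ holds by the very definition of $\rho$. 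Conversely, any ring homomorphism $\rho\colon A\to W_{p,1}(B)$ with $(\pi_{p,1})_B\circ\rho=f$ has the form $\rho(a)=(f(a),\delta(a))$ for a unique set map $\delta\colon A\to B$, and running the same computations in reverse shows $\delta$ is a $p$-derivation of $f$. The two assignments are visibly inverse.

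I do not anticipate any genuine obstacle: the theorem is a bookkeeping exercise once the structures are unwound. The one point demanding care is the reading of the symbol $C_p(a,b)$ in the $p$-derivation axioms --- it must be interpreted as the integer polynomial $C_p$ evaluated on $a,b\in A$ and then transported to $B$ through $f$ --- and, relatedly, keeping track of where $f$ has to be inserted when passing between the abstract axioms for $\delta$ and the concrete identities in $W_{p,1}(B)$. Finally, I would remark that flatness of $A$ and $B$ over $R$ is not used above; it is retained because, when $B$ is $p$-torsion free, each of these data is moreover equivalent to a lift of Frobenius $\phi\colon A\to B$, related to $\delta$ by $\phi(a)=f(a)^p+p\,\delta(a)$, which is the form in which the statement will be applied later.
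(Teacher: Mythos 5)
Your proposal is correct and matches the paper's approach: the paper's own proof is literally the one-line ``Follows from the definitions,'' and your argument is exactly that unwinding, with $(1)\Leftrightarrow(3)$ via the universal property of $A_{p,1}$ (Theorem~\ref{up affine}) and $(1)\Leftrightarrow(2)$ via the explicit ring laws on $W_{p,1}(B)$. Your observation that flatness is not actually used here is also accurate.
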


\begin{proof}
 Follows from the definitions.
\end{proof}

\begin{example}
Let $A$ and $B$ be rings over $R = W_{p,\infty}(k)$ with $k$ perfect of characteristic $p$. 
Suppose $p\neq 2$ and consider the diagram
$$\xymatrix{
A_{p,1} \ar[r]^u & B \\
A\ar[u]
}  $$
This induces $A \to B. $

If $f: A \to B$ is already given and $A = A/p^{n+1}$ then $(A)_{p,1} = (A)_{p,1}/p^{n}$. 
This follows from the fact that 
$$ \delta(p^n) = \frac{p^n - p^{np}}{p} = p^{n-1}(1 - p^{n(p-1)}) $$
when $p$ is not a unit. 

Hence we have a factorization
$$ \xymatrix{
A_{p,1} \ar[r]^u & B \\
A \ar[u] \ar[r]^{\pi_{p,1}^*} & A/p^n \ar[u]
},$$ 
although $f: A\to B$ may not factor through a reduction modulo $p^n$ in general.
 
\end{example}

\begin{thm}
Let $B$ be a $p$-torsion free ring and $\phi$ a lift of the Frobenius on $B$ inducing a lift of the Frobenius on $A_n = B/p^{n+1}$. 
This then induces a well-defined $p$-derivation
$$ \delta_p: A_n \to A_{n-1}.$$
\end{thm}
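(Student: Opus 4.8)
The plan is to reduce everything to the $p$-torsion-free statement about $\phi$ and then divide by $p$. First I would recall that since $B$ is $p$-torsion free, the lift of the Frobenius $\phi \colon B \to B$ corresponds to a genuine $p$-derivation $\delta_p \colon B \to B$ via the formula $\delta_p(x) = (\phi(x) - x^p)/p$, where the division by $p$ is literal in $B$ because $\phi(x) \equiv x^p \pmod p$. So the real content is to check that this $\delta_p$ descends to a well-defined map $A_n \to A_{n-1}$ once we quotient source by $p^{n+1}$ and target by $p^n$.

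The key step is the estimate on $\delta_p$ of powers of $p$, which has already been established: by Theorem \ref{ideals} (or the Remark following Theorem \ref{up affine}), for the Witt-vector base one has $\delta_p(p^m) = p^{m-1}\cdot\mathrm{unit}$, and more generally the $p$-derivation lowers the $p$-adic valuation by exactly one on the ideal $(p^m)$. Concretely I would argue: suppose $x \equiv x' \pmod{p^{n+1}}$ in $B$, say $x = x' + p^{n+1} y$. Then using the additive axiom $\delta_p(a+b) = \delta_p(a) + \delta_p(b) + C_p(a,b)$ together with the Leibniz-type axiom applied to $p^{n+1}y$, one computes $\delta_p(x) - \delta_p(x')$ in terms of $\delta_p(p^{n+1}y)$ and cross terms $C_p(x', p^{n+1}y)$. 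The binomial polynomial $C_p(S,T) = (S^p + T^p - (S+T)^p)/p$ is divisible by $ST$, hence $C_p(x', p^{n+1}y)$ is divisible by $p^{n+1}$, which is certainly $0$ in $A_{n-1} = B/p^n$. The term $\delta_p(p^{n+1}y)$ expands via the product rule as $\delta_p(p^{n+1}) y^p + (p^{n+1})^p \delta_p(y) + p\,\delta_p(p^{n+1})\delta_p(y)$; the first summand is $p^n \cdot \mathrm{unit}\cdot y^p \equiv 0 \pmod{p^n}$, the second is divisible by $p^{p(n+1)}$, and the third by $p\cdot p^n$. So $\delta_p(x) \equiv \delta_p(x') \pmod{p^n}$, i.e. the induced map $A_n \to A_{n-1}$ is well-defined on representatives. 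One then checks that the three $p$-derivation axioms pass to the quotient, which is immediate since all the defining identities are polynomial identities preserved under ring homomorphisms, and the relevant correction terms $C_p$ are again handled by the same divisibility.

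The main obstacle — really the only place any care is needed — is the bookkeeping of $p$-adic valuations in the expansion of $\delta_p(p^{n+1}y)$ and in verifying that $\delta_p$ applied to an element of $p^{n+1}B$ lands in $p^n B$; this is exactly where the hypothesis that $B$ is $p$-torsion free and sits over $W_{p,\infty}(k)$ enters, via Theorem \ref{ideals}. Everything else is formal: the existence of $\phi$ as a lift of Frobenius on $A_n$ is given by hypothesis, passing from $\phi$ to $\delta_p$ on $B$ uses $p$-torsion freeness, and the compatibility $A_n \to A_{n-1}$ (rather than $A_n \to A_n$) is forced precisely because $\delta_p$ drops $p$-adic precision by one level. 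I would close by remarking that this is the affine shadow of the jet-space statement and matches the construction of the reduction maps $\J^1(X)_n \to X_{n-1}$ used later.
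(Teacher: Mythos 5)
Your argument is correct, but it takes a genuinely different route from the one in the paper. The paper works intrinsically on $A_n$: it defines $\delta_p(a) = \left[\frac{1}{p}\right](\phi(a)-a^p)$ using the division-by-$p$ map $[1/p]\colon pA_n \to A_n/\ann(p)$, identifies $A_n/\ann(p)$ with $A_{n-1}$ via $p$-torsion freeness of $B$, and then must verify the $p$-derivation axioms directly in the quotient --- in particular the identity $\left[\frac{1}{p}\right](g(a)g(b)) = p\cdot\left[\frac{1}{p}\right](g(a))\cdot\left[\frac{1}{p}\right](g(b))$, which it flags as the delicate point and leaves to the reader. You instead lift to $B$, where $p$-torsion freeness makes $\delta_p = (\phi - (\cdot)^p)/p$ an honest $p$-derivation $B\to B$, and then descend by the valuation estimate $\delta_p(x'+p^{n+1}y) \equiv \delta_p(x') \pmod{p^n}$; your bookkeeping (divisibility of $C_p(x',p^{n+1}y)$ by $p^{n+1}$ since $ST \mid C_p(S,T)$, and the three-term expansion of $\delta_p(p^{n+1}y)$) is accurate. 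Your route buys a complete verification essentially for free, since the axioms are inherited from $B$ under the quotient maps; its cost is that it uses the hypothesis that the Frobenius lift on $A_n$ is induced from one on $B$, whereas the paper's construction would make sense for a lift defined only on $A_n$ itself. One small simplification: you do not need Theorem~\ref{ideals} or the Witt-vector base here --- since $\phi$ is a ring homomorphism, $\phi(p^{n+1}) = p^{n+1}$, so $\delta_p(p^{n+1}) = p^n - p^{p(n+1)-1} \in p^nB$ in any $p$-torsion free ring, and only this containment (not the unit factor) is used in your estimate.
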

\begin{proof}
In general, given any $A$ and a lift of the Frobenius $\phi: A \to A$, one can try to define
$$\delta_p: A \to A/\ann(p) $$
via
$$ \delta_p(a) = (\left[ \frac{1}{p}\right] \circ g)(a) $$
where $g(a) = \phi(a) - a^p$, and $g: A \to pA$ at least.

The difficulty in defining $\delta_p$ comes from the equality
$$\left[ \frac{1}{p}\right ] (g(a) g(b)) = p \cdot \left [ \frac{1}{p} \right](g(a)) \cdot \left[\frac{1}{p} \right] (g(b) )  \mbox{  in  }  A/ \ann(p). $$

We leave it to the reader to verify that this makes sense.

It is useful for the reader to note that if $A_n = B/p^{n+1}$ where $B$ is $p$-torsion free then 
\begin{eqnarray*}
\ann_A(p^j) &\cong& p^{n-j} A\\
A/\ann_A(p^j) &\cong& A/p^{n-j} 
\end{eqnarray*}
These give maps $[1/p]: pA_n \to A_{n-1}$. 
\end{proof}

\begin{thm}\label{local lifts}
Let $A,B$ be flat over $R=W_{p,\infty}(k)$ where $k\subset \FFpbar$. 
Suppose that $A$ is of finite type over $R$. 
Let $f: A\to B$ be a morphism of rings inducing the morphism of rings $f_n:A_n\to B_n$.
The following are equivalent
\begin{enumerate}
\item  \label{ch1} A lift of the Frobenius $\phi_n:A_n \to B_n$,
 $$ \phi_n(a) \equiv f_0(a)^p \mod p $$
\item \label{ch2} A $p$-derivation $\delta_p: A_n \to B_{n-1}$
\item \label{ch3} A morphism $(A_{p,1})_{n-1} \to B_{n-1}$ of $A_{n-1}$-algebras. 
\end{enumerate}
 
\end{thm}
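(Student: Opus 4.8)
The plan is to realize the theorem as the ``mod $p^{n+1}$'' refinement of the equivalence between $p$-derivations and first $p$-jet algebra maps (Theorem \ref{up affine}), glued to the passage between lifts of the Frobenius and $p$-derivations explained in the theorem immediately above; so throughout I would use that $A$ and $B$ are $p$-torsion free (equivalent to flat over $R$ by the flatness lemma). The one standing elementary fact I would isolate first is that, since $B$ is $p$-torsion free, $\ann_{B_n}(p) = p^n B_n$, so $[1/p]$ is an isomorphism of abelian groups $p B_n \xrightarrow{\sim} B_{n-1}$ whose inverse is induced by multiplication by $p$; in particular $p\,[1/p](x) = x$ for $x \in pB_n$, $[1/p](px) = x$ for $x \in B_{n-1}$, and multiplication by $p$ gives a well defined map $B_{n-1} \to B_n$ independent of any choice of lift.

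For $(\ref{ch1}) \Leftrightarrow (\ref{ch2})$: from a lift of the Frobenius $\phi_n$ I would set $g(a) = \phi_n(a) - f_n(a)^p$, observe that the congruence $\phi_n(a) \equiv f_0(a)^p \bmod p$ says exactly that $g$ takes values in $pB_n$, and define $\delta_p := [1/p] \circ g \colon A_n \to B_{n-1}$. The two elementary identities $g(a+b) = g(a) + g(b) + p\,C_p(f_n(a), f_n(b))$ and $g(ab) = f_n(b)^p g(a) + f_n(a)^p g(b) + g(a)g(b)$ become, after applying $[1/p]$ (which is additive on $pB_n$) and using $[1/p](g(a)g(b)) = p\,\delta_p(a)\delta_p(b)$, precisely the two axioms for a $p$-derivation of $A_n \to B_{n-1}$; $\delta_p(1) = 0$ is clear. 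Conversely, from $\delta_p$ I would put $\phi_n(a) = f_n(a)^p + p\,\delta_p(a)$ (using the well defined multiplication by $p$), which is a ring homomorphism by the same two identities read backwards and satisfies $\phi_n(a) \equiv f_0(a)^p \bmod p$ by construction; the relations $p\,[1/p](x) = x$ and $[1/p](px) = x$ show the two assignments are mutually inverse. This is the relative form, performed along $f$, of the computation recorded in the preceding theorem.

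For $(\ref{ch2}) \Leftrightarrow (\ref{ch3})$: by the universal property of the first $p$-jet ring (Theorem \ref{up affine}) applied to $A_n$, a $p$-derivation $\delta_p \colon A_n \to B_{n-1}$ is the same datum as an algebra homomorphism $u_{\delta_p} \colon (A_n)_{p,1} \to B_{n-1}$ (any ring map into $B_{n-1}$ kills $p^n$, hence factors through $A_{n-1}$, so this is an $A_{n-1}$-algebra map). One then identifies $(A_n)_{p,1}$ with $(A_{p,1})_{n-1}$: in $(A_n)_{p,1}$ one has $0 = \delta_{p,1}(p^{n+1}) = p^n \cdot \unit$ (as for any $p$-derivation, cf.\ Theorem \ref{ideals}), so $p^n = 0$ there, and together with the description of the defining relations this gives $(A_n)_{p,1} \cong A_{p,1}/p^n = (A_{p,1})_{n-1}$, exactly as in the example following Theorem \ref{up affine}. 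Under this identification $u_{\delta_p}$ is precisely datum $(\ref{ch3})$, and the inverse sends an $A_{n-1}$-algebra map $(A_{p,1})_{n-1} \to B_{n-1}$ to its composition with the universal $p$-derivation $A_n \to (A_n)_{p,1}$.

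The only step with real content is $(\ref{ch1}) \Leftrightarrow (\ref{ch2})$, and there the work is the bookkeeping needed to see that the $[1/p]$- and multiplication-by-$p$-maps between $B_n$ and $B_{n-1}$ transport the axioms correctly — the point where $p$-torsion-freeness is genuinely used. Since the absolute version of precisely this computation is already in the theorem above, I do not expect any new obstacle beyond carrying it along the structure map $f$; the equivalence $(\ref{ch2}) \Leftrightarrow (\ref{ch3})$ is then formal from Theorems \ref{up affine} and \ref{ideals} and the cited example.
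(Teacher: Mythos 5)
Your proposal is correct and follows essentially the same route as the paper: $(2)\Rightarrow(1)$ via $\phi_n(a)=f_n(a)^p+p\,\delta_p(a)$, $(1)\Rightarrow(2)$ via $[1/p]$ applied to $\phi_n(a)-f_n(a)^p$ (the relative form of the paper's preceding theorem on $p$-torsion-free rings), and $(2)\Leftrightarrow(3)$ via the universal property of $A_{p,1}$ together with the identification $(A_n)_{p,1}\cong(A_{p,1})_{n-1}$. You supply considerably more detail than the paper's very terse proof, but there is no difference in method.
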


\begin{proof}
To see that \ref{ch2} implies \ref{ch1} note that $\phi_n(a) := a^p + p \delta(a)$ defines a lift of the Frobenius.
We will show that \ref{ch3} and \ref{ch2} are equivalent: Let $A = R[x]/(f)$ so that $(A_{p,1})_n = (R[x,\dot{x}]/(f,\dot{f}))/p^n = R_{n-1}[x,\dot{x}]/(f,\dot{f})$.
The map clearly defines a $p$-derivation. (Note: $ (\delta_{p,1} )_n: A_n \to (A_{p,1})_{n-1}$ is universal).

We will not show \ref{ch1} implies \ref{ch2} but the reader can verify that this follows from the universal property of $p$-derivations.  
\end{proof}

\begin{lem}
 Let $A$, $B$ and $C$ be flat $R=W_{p,\infty}(k)$-algebras where $k \subset \FFpbar$.
 Suppose $A \to B$ is an \`{e}tale morphism of rings.
 Every $p$-derivations $B_n \to C_{n-1}$ lifts to a unique $p$-derivation $A_n \to C_{n-1}$.
\end{lem}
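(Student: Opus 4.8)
The plan is to reduce, by the dictionary of Theorem~\ref{local lifts} between $p$-derivations and lifts of the Frobenius, to the assertion that a lift of the Frobenius extends uniquely along an \'etale morphism, and then to deduce the latter from the formal \'etaleness of $\iota\colon A\to B$ together with the fact --- already used in the excerpt --- that the reductions in a tower of $p$-adic truncations, equivalently the projections $W_{p,1}(-)\to(-)$, have square-zero kernel. Throughout I regard $C$ as a $B$-algebra, hence as an $A$-algebra via $\iota$, so that both types of $p$-derivation in the statement make sense; when $n=0$ the target $C_{n-1}=C/p^{0}$ vanishes and there is nothing to prove, so I assume $n\ge 1$.

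First I would observe that $\iota_n\colon A_n\to B_n$ is again \'etale, being the base change of $\iota$ along $A\to A_n$, hence formally \'etale. Next, by the equivalence of lifts of the Frobenius and $p$-derivations (Theorem~\ref{local lifts}; the implication I use needs only that the target be flat over $R$) and by the compatibility of that dictionary with precomposition, it suffices to prove: given a lift of the Frobenius $\phi_A\colon A_n\to C_n$ of the structure map $A_n\to C_n$, there is a \emph{unique} lift of the Frobenius $\phi_B\colon B_n\to C_n$ of $B_n\to C_n$ with $\phi_B\circ\iota_n=\phi_A$. Translating $\phi_B$ back yields the required lift of the $p$-derivation, and running the construction in reverse (restricting $\phi_B$, resp.\ the $p$-derivation, along $\iota_n$) shows the two sets of $p$-derivations are actually in bijection.

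To construct $\phi_B$ I would climb the tower
$$ C_n\twoheadrightarrow C_{n-1}\twoheadrightarrow\cdots\twoheadrightarrow C_0, $$
each step $C_j\to C_{j-1}$ (for $1\le j\le n$) being a square-zero extension, since its kernel $p^{j}C/p^{j+1}C$ has square $p^{2j}C/p^{j+1}C=0$. At the bottom I would take $\psi_0\colon B_n\to C_0$ to be the reduced structure map $B_0\to C_0$ composed with the absolute Frobenius of $C_0$ (a ring homomorphism, since $C_0$ has characteristic $p$) and precomposed with $B_n\to B_0$; because $\phi_A$ lifts the Frobenius, $\psi_0\circ\iota_n=\phi_A\bmod p$. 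Inductively, given $\psi_{j-1}\colon B_n\to C_{j-1}$ with $\psi_{j-1}\circ\iota_n=\phi_A\bmod p^{j}$, formal \'etaleness of $\iota_n$ applied to the square-zero extension $C_j\to C_{j-1}$ --- with $C_j$ made an $A_n$-algebra via $\phi_A\bmod p^{j+1}$ --- produces a unique ring map $\psi_j\colon B_n\to C_j$ lifting $\psi_{j-1}$ and satisfying $\psi_j\circ\iota_n=\phi_A\bmod p^{j+1}$; in particular $\psi_j$ still reduces to $\psi_0$ modulo $p$. Setting $\phi_B:=\psi_n$ gives a ring map $B_n\to C_n$ with $\phi_B\circ\iota_n=\phi_A$ whose reduction modulo $p$ is the composite of the structure map with the Frobenius, i.e.\ a lift of the Frobenius; the uniqueness clause of formal \'etaleness at each level forces $\phi_B$ to be the only such map. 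Alternatively, one can run essentially the same argument in one step at the level of the Witt functor, applying formal \'etaleness of $\iota$ to the square-zero surjection $W_{p,1}(C_{n-1})\to C_{n-1}$ whose kernel $V_p(W_{p,1}(C_{n-1}))$ has square zero; combined with the universal property of Theorem~\ref{up affine} this also reproves $B_{p,1}\cong A_{p,1}\otimes_A B$ for \'etale $\iota$.

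I expect the only genuine difficulty to be bookkeeping: through the induction one carries two compatibility conditions simultaneously --- lifting along the $p$-adic tower and agreeing with $\phi_A$ on the image of $\iota_n$ --- and one must remember that each $\psi_j$ has source $B_n$ throughout, even though it need not factor through $B_j$. Beyond that, the argument is only a routine application of formal \'etaleness and of the square-zero observations made explicitly earlier in the paper.
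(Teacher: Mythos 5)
Your argument is correct, and it rests on the same underlying principle as the paper's proof --- the infinitesimal lifting property of the (formally) \'etale map against a square-zero extension --- but you deploy it against a different square-zero extension. The paper works directly with the $p$-derivation viewed as a ring map $A_n\to W_{p,1}(C_{n-1})$ over $C_{n-1}$ and applies the lifting property \emph{once}, to the surjection $W_{p,1}(C_{n-1})\to C_{n-1}$ with square-zero kernel $V_p(W_{p,1}(C_{n-1}))$; this is exactly the ``alternative'' you sketch in your last displayed remark. Your primary route instead first converts the $p$-derivation into a lift of the Frobenius $A_n\to C_n$ via the dictionary of Theorem~\ref{local lifts} (which costs you the flatness of $C$, needed to divide by $p$) and then climbs the tower $C_n\twoheadrightarrow\cdots\twoheadrightarrow C_0$, applying formal \'etaleness $n$ times to the square-zero steps $C_j\to C_{j-1}$. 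The paper's Witt-vector formulation is shorter and avoids both the translation and the induction; your tower argument is more explicit about where uniqueness comes from (it is forced level by level, starting from the fact that any Frobenius lift reduces to $x\mapsto \bar x^p$ modulo $p$) and makes the bookkeeping of the two compatibilities visible rather than hidden in the single commutative square. One small point worth flagging either way: the lemma as literally worded asks to pass from a $p$-derivation on $B_n$ to one on $A_n$, which would be mere restriction; like the paper's own proof, you correctly prove the substantive direction, namely unique extension from $A_n$ to $B_n$ along the \'etale map.
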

\begin{proof}
 The proof is essentially the same as the standard proof for lifting infinitesimal deformations.
 We prove a stronger result from which our result follows a fortiori.
 
 Recall that \`{e}tale ring homomorphisms have the infinitesimal lifting property: For every commutative diagram
 \begin{equation}\label{inf}
  \xymatrix{ 
  A\ar[r]\ar[d]^{\alpha} \ar[d] & B\ar[d]^{\beta} \\
  C \ar[r] & C/I }, \ \ \ I^2 =0, 
 \end{equation}
 there exists a unique map $\widetilde{\beta}:B\to C$ making the diagram commute.
 
 We want to show that when we are given a $p$-derivation
 $$\xymatrix{
   A \ar[d] \ar[dr]& \\
   W_{p,1}(C') \ar[r] & C'
   }$$
 There exists a diagram 
 $$\xymatrix{
   B \ar[d] \ar[dr]& \\
   W_{p,1}(C') \ar[r] & C'
   }$$
 lifting the previous.
 
 To apply the infinitesimal lifting criterion (\ref{inf}) with the following choices:
   \begin{eqnarray*}
    C &=& W_{p,1}(C'), \\
    C/I &=& C', \\
    I &=& V_p(W_{p,1}(C')), \\
    \alpha &=& \mbox{ map assoc. to $p$-der $B \to C'$},\\
    \beta &=& \mbox{ alg map $A \to C$ }.
   \end{eqnarray*}
\end{proof}

\section{$p$-Jets}\label{sec: pjets}
References for this section include \cite{Buium2005}, \cite{Buium1996} and \cite{Borger2011}.
Other than in presentation, this section contains no new information.

We summarize the results of this section:
\begin{enumerate}
\item Suppose $X/R=W_{p,\infty}(\FFpbar)$ is flat. 
Then local sections of the map $(\pi_{p,1})_n: J_p^1(X)_n \to X_n$ induce local $p$-derivations/lifts of the Frobenius on $X_n$ on the ring $A$ and conversely.
\item If $X/R$ is flat then $\J^{r}(X_n) = \J^r(X)_{n-r}$, in particular $\J^1(X_n) = \J^1(X)_{n-1}$.
\item Suppose $X/R$ is flat, we have the following compatibility between $p$-jet functors and open and closed immersions.
\begin{eqnarray*}
J_{p}^r(\mbox{ open immersion} )_n =  \mbox{ open immersion}.\\
J_{p}^r(\mbox{ closed immersion} )_n = \mbox{ closed immersion } 
\end{eqnarray*}
\end{enumerate}

\subsection{$p$-jet spaces}
Let $X/R$ be a scheme where $R = W_{p,\infty}(k)$, with $k$ perfect of characteristic $p$.
define the $r$th $p$-\textbf{jet functor} $J_p^r(X): \CRing_R \to \Set$ to be the functor of $W_{p,r}$ valued points of $X$:
$$ J_p^r(X)(A) := X(W_{p,r}(A)) \ \ \ \ A \in \CRing_R.$$

The natural morphism of ring schemes $\pi_{r,s}: W_{p,r} \to W_{p,s}$ for $r>s$ induces functorial morphisms $J_p^r(X) \to J_{p}^s(X)$.
The morphisms $\pi_r: W_{p,r} \to \OO$ induce functorial morphisms $J_p^r(X) \to X$. 

\begin{example}
 When $X = \Spec(A)$ and $A$ is an $R$ algebra with $R = W_{p,\infty}(k)$ where $k$ is perfect of characteristic $p$ we have that $J_p^1(X)$ is representable and
 $$ J_p^1( \Spec(A) ) = \Spec( A_{p,1} )$$
as schemes over $X$. 
\end{example}

\begin{rem}
 Since the constuction $A \mapsto A_{p,1}$ does not localize well one needs to work hard to get that $p$-jet spaces are representable.
 This essentially follows from the quotient rule for $p$-derivations:
  $$ \delta \left(\frac{1}{f} \right ) = \frac{f^p \delta(f)}{f^p(f^p + p \delta(f))}.$$
\end{rem}

For $X/R=W_{p,\infty}(k)$ flat where $k\subset \FFpbar$, we define the sheaf of $\OO_{X_n}$-algebras $\OO_{X_n}^{(1)}$ to be the sheaf associated to presheaf
 $$ U \mapsto \OO(U)_{p,1} \mod p^{n+1}, $$
for relevant open subsets of $U$.
We will construct the global spectrum of this ring in order to produce the first $p$-jet spaces.
For higher order $p$-jets one does something similar.

\begin{thm}[\cite{Buium1996}]\label{buium thm}
Let $R = W_{p,\infty}(k)$ where $k$ is perfect of characteristic $p$.
\begin{enumerate}
\item Let $X/R$ be a flat scheme. 
The functor $J_p^r(X):= X \circ W_{p,r}$ over $X$, is representable when reduced modulo $p^{n+1}$ for every $n$.
\item Furthermore for every $A$ in $\CRing_{R_n}$ we have 
$$ J_p^r(X)_n(A) = J_p^r(X)(A) = X(W_{p,r}(A)) \to X(A) = X_n(A) $$
where the map is $(\pi_{p,r})_A$.
\end{enumerate}
\end{thm}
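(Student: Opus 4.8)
The plan is to reduce the statement to affine patches, where representability is essentially the construction of $A_{p,1}$ from Section \ref{p-jet}, and then glue. First I would observe that it suffices to treat $r=1$: the functor $J_p^r(X)$ factors through iterated application of $J_p^1$ (since $W_{p,r}$ is built from successive extensions by $W_{p,1}$, concretely $W_{p,r}(A)$ fibers over $W_{p,r-1}(A)$ with kernel a square-zero ideal isomorphic to $A$), so once $J_p^1$ of a flat finite-type $R$-scheme is again representable and flat one gets the higher $J_p^r$ by induction. The flatness is what keeps the induction alive, since by Lemma (flatness over Witt vectors $=$ $p$-torsion free) flatness over $R$ is the same as $p$-torsion freeness, and the presentation $A_{p,1} = R[x,\dot x]/(f,\dot f)$ visibly produces a $p$-torsion-free ring when $A$ is.

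For the affine case, if $X = \Spec(A)$ with $A = R[x_1,\dots,x_n]/(f_1,\dots,f_m)$, then by Theorem \ref{up affine} the ring $A_{p,1} = R[x,\dot x]/(f,\dot f)$ represents $J_p^1(X)$: a morphism $\Spec(W_{p,1}(B)) \to \Spec A$ over $R$ is, by the adjunction between $W_{p,1}$ and the forgetful functor (equivalently, by the equivalence of data in the Theorem just before the first $p$-jet-ring example), the same as a $p$-derivation $A \to B$ lifting the structure map, which by the universal property is the same as an $A$-algebra map $A_{p,1} \to B$. Reducing mod $p^{n+1}$ gives part (2): $J_p^1(X)_n(A) = X(W_{p,1}(A)) = X_n(A)$-points equipped with a lift of Frobenius, with the forgetful map to $X_n(A)$ being $(\pi_{p,1})_A$, exactly as recorded in Section \ref{sec: pjets}. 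Compatibility of $A \mapsto A_{p,1}$ with localization is the subtle point: the naive localization $(A_f)_{p,1}$ is not $(A_{p,1})_{f}$, but one has the quotient-rule identity from the remark above, $\delta(1/f) = -f^p\delta(f)/\big(f^p(f^p+p\delta(f))\big)$, which shows that on the distinguished open $D(f)$ the correct object is $(A_{p,1})_{f(f^p+p\dot f)}$; since $f^p + p\dot f$ is a unit whenever $f$ is invertible mod $p$, this glues consistently over an affine cover, and one defines $J_p^1(X)_n$ as the relative $\Spec$ of the sheaf $\OO_{X_n}^{(1)}$ introduced before the theorem. Finally, for (1) in general one covers $X$ by affines, applies the affine case, and checks the patching data agree on overlaps using the localization formula above; flatness of the result follows patch-by-patch.

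The main obstacle is exactly Step of checking that the locally defined $p$-jet rings glue — i.e. that $A \mapsto A_{p,1}$, though not compatible with arbitrary localization, is compatible with localization at elements that remain units modulo $p$, which is all one needs over an affine cover. This is where the "quotient rule" and the hypothesis of flatness (so that $f^p + p\dot f$ being a unit can be detected mod $p$) do the work. Everything else — the reduction to $r=1$, the affine representability, and statement (2) — is a formal consequence of the universal property of $p$-derivations and the adjunction with $W_{p,1}$.
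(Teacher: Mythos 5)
The paper itself offers no proof of this theorem --- it is cited to Buium's 1996 paper --- so the only internal material to compare against is the remark on the quotient rule and the later proof that $J_p^1$ preserves open immersions, where the author writes out the identity $((A_{p,1})_g)_n = ((A_g)_{p,1})_n$ and calls it ``really the key observation''. Your gluing argument is exactly that observation: affine representability by $A_{p,1}$ via the universal property, plus compatibility with localization at elements invertible mod $p$ because $f^p + p\dot f = \phi(f)$ becomes a unit once one reduces modulo $p^{n+1}$ (note it is \emph{only} after this reduction, or $p$-adic completion, that the geometric series terminates; your parenthetical about flatness ``detecting units mod $p$'' is not where flatness actually enters). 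That part of your proposal is sound and is the heart of the matter.

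The genuine gap is your reduction to $r=1$. It is not true that $J_p^r$ factors through iterated application of $J_p^1$: as sets $W_{p,1}(W_{p,r-1}(A)) = A^{2r}$ while $W_{p,r}(A) = A^{r+1}$, so $J_p^1(J_p^{r-1}(X))(A) = X(W_{p,1}(W_{p,r-1}(A)))$ and $J_p^r(X)(A) = X(W_{p,r}(A))$ are different functors; the comonad comultiplication $W_{p,r}(A) \to W_{p,1}(W_{p,r-1}(A))$ only gives a natural transformation (in fact a closed immersion of the representing objects), not an isomorphism. Concretely, $\OO(J_p^1(J_p^1(\AA^1)))$ has variables $x,\dot x, \dot{(\dot x)}$ \emph{and} an independent $\delta$ of $\dot x$ coming from the outer $J_p^1$, whereas $\OO(J_p^2(\AA^1))$ has only $x, \dot x, \ddot x$. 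The repair is standard and does not change your architecture: define $A_{p,r} = R[x,\dot x,\ldots,x^{(r)}]/(f,\dot f,\ldots,f^{(r)})$ directly as the ring carrying the universal length-$r$ truncated $p$-derivation (equivalently representing $B \mapsto \mathrm{Hom}_R(A, W_{p,r}(B))$ over the structure map), and run the same localization-mod-$p^{n+1}$ argument on it; the quotient rule applies verbatim to each $\delta^i(1/g)$. Your square-zero claim for the kernel of $W_{p,r}(A)\to W_{p,r-1}(A)$ is also false integrally ($V^r(a)V^r(b) = p^rV^r(ab)$, nonzero unless $p^rA=0$), but once the iteration claim is dropped you no longer need it. Part (2) is, as you say, formal from the universal property.
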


A more difficult theorem of Borger proves the following:

\begin{thm}[\cite{Borger2011}]
Let $X/R$ be any scheme and $R = W_{p,\infty}(k)$ with $k$ perfect of characteristic $p$. 
The functor $J_p^r(X) := X \circ W_{p,r}$ is representable in the category of schemes over $R$.
\end{thm}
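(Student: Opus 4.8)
\emph{Overview of the strategy.} The plan is to settle the affine case first — there the statement is little more than a repackaging of Buium's construction — and then to upgrade to an arbitrary $X$ by controlling how the Witt vector functor interacts with localization; this upgrade is exactly what goes beyond Theorem \ref{buium thm}.

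\emph{Affine case.} For $X = \Spec B$ with $B \in \CRing_R$ one has $\J^r(X)(A) = \mathrm{Hom}_R(B, W_{p,r}(A))$, where $W_{p,r}(A)$ carries the $R$-algebra structure $R \to W_{p,r}(R) \to W_{p,r}(A)$ induced by the canonical lift of Frobenius on $R$. The functor $A \mapsto W_{p,r}(A)$ on $\CRing_R$ preserves all small limits — as a functor to sets it is corepresented by a polynomial ring, and limits of rings are computed on underlying sets — and $\CRing_R$ is locally presentable, so by the adjoint functor theorem $W_{p,r}$ has a left adjoint $B \mapsto B_{p,r}$. This left adjoint is the higher $p$-jet algebra, constructed exactly as in Theorem \ref{up affine} (the case $r = 1$). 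Hence $\J^r(\Spec B) = \Spec B_{p,r}$ is an affine scheme over $R$.

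\emph{General case.} The essential input is that $W_{p,r}$ is compatible with Zariski localization in the \emph{twisted} sense $W_{p,r}(A_f) = W_{p,r}(A)[1/[f]]$, a consequence of the Teichm\"uller multiplication identity $[a]\cdot(c_0,\dots,c_r) = (a c_0, a^p c_1, \dots, a^{p^r} c_r)$. Thus the Witt-vector-scheme functor $W^{*}_{p,r}\colon \Spec A \mapsto \Spec W_{p,r}(A)$ sends open immersions to open immersions and respects their intersections, hence glues to an endofunctor of schemes; as it is also compatible with filtered colimits and with coproducts, it extends to a colimit-preserving endofunctor of the category of fppf sheaves over $R$, and therefore admits a right adjoint. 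Unwinding the adjunction identifies that right adjoint with $X \mapsto \J^r(X)$, since $\mathrm{Hom}(\Spec C, \J^r X) = \mathrm{Hom}(W^{*}_{p,r}\Spec C, X) = \mathrm{Hom}(\Spec W_{p,r}(C), X) = X(W_{p,r}(C))$. What then remains — the delicate part — is to see that the sheaf $\J^r(X)$ is representable by a scheme. One must produce an affine open cover of $\J^r(X)$; note that, unlike in the truncated situation of Theorem \ref{buium thm} and contrary to a first guess, the affine schemes $\J^r(U_i)$ attached to an affine open cover $\{U_i\}$ of $X$ do \emph{not} cover $\J^r(X)$ — this already fails for $X = \PP^1$, because over a ring $C$ with $p$ invertible one has $W_{p,1}(C)\cong C\times C$, and $\PP^1(C\times C)\neq \PP^1(C)^2$ is not covered by its two standard charts. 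Instead one extracts a cover of $\J^r(X)$ from a closer study of $W_{p,r}$ relative to the Zariski (and \'etale) topology; for instance, over a local ring of residue characteristic $p$ the Witt ring is again local, so on such rings $\J^r$ does commute with passing to charts, and this together with the affine case suffices. One then checks that the resulting scheme reduces mod $p^{n+1}$ to Buium's jet space.

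\emph{Where the difficulty sits.} The hard step is the last one: representing the sheaf $\J^r(X)$ by a scheme, i.e.\ building the global jet space. Its root is exactly the failure of $p$-jets to localize in the obvious way — for $U\subseteq X$ open, the naive preimage $\pi_r^{-1}(U)\subseteq \J^r(X)$ is strictly larger than $\J^r(U)$ — which is the phenomenon noted in the remark following Theorem \ref{buium thm}, and the reason this result lies properly deeper than its mod-$p^{n+1}$ shadow.
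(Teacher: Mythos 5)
The paper does not actually prove this statement: it is quoted from Borger, and the surrounding text records only the affine localization computation $(A_{p,1})_g \hookrightarrow (A_g)_{p,1}$ together with the warning that in the non-$p$-formal, non-affine case ``one needs to do more work,'' deferring that work to \cite{Borger2011}. So your proposal has to be measured against that deferred argument. Your affine case is fine: $W_{p,r}$ is corepresented on underlying sets by a polynomial ring in $r+1$ variables, hence preserves limits, hence admits the left adjoint $B \mapsto B_{p,r}$, and $\J^r(\Spec B) = \Spec B_{p,r}$. You have also correctly located the obstruction in the global case — $\J^r$ does not commute with passing to an open cover — and your $\PP^1$ example makes the right point (modulo the slip that $\PP^1(C\times C) = \PP^1(C)^2$ with \emph{equality}; the issue is that this product is not the union of the squares of the two standard charts).

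The gap is in the step you yourself flag as delicate and then dispatch in one clause: ``over a local ring of residue characteristic $p$ the Witt ring is again local, so \dots this together with the affine case suffices.'' It does not suffice. Whether a family of open subfunctors covers the sheaf $\J^r(X)$ is tested on points $\Spec C \to \J^r(X)$ with $C$ local, and these come in two kinds. If the residue characteristic is $p$, then indeed $W_{p,r}(C)$ is local (since $1 + V W_{p,r-1}(C)$ consists of units once $p \in \mathfrak{m}_C$) and the point factors through some $\J^r(U_i)$. But if the residue characteristic is prime to $p$, then $p \in C^\times$, $W_{p,r}(C) \cong C^{r+1}$ is a product of local rings, and the point need not factor through any $\J^r(U_i)$ — this is exactly the regime of your own $\PP^1$ counterexample, so the proposed fix excludes precisely the case that created the difficulty. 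Over $R = W_{p,\infty}(k)$ the locus where $p$ is invertible is nonempty ($R[1/p]$ is a $p$-adic field), so it cannot be ignored. The missing idea is to supplement the opens $\J^r(U_i)$ by the opens $(U_{i_0} \times \cdots \times U_{i_r})[1/p]$ furnished by the ghost-component isomorphism $\J^r(X)[1/p] \cong X^{r+1}[1/p]$, to verify that all of these are genuinely open subfunctors, that together they cover (by the local-ring test above), and that a Zariski sheaf admitting an open cover by schemes is a scheme. That construction is the actual content of Borger's theorem, and it is asserted rather than carried out in your sketch.
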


\begin{thm}
Let $X/R$ be a scheme which is flat over $R$ (so that multiplication by $p$ is injective). 
\begin{enumerate}
\item The natural morphism $\pi_{m,s}: J_{p}^m(X_n) \to J_{p}^s(X_n)$ factors through reduction modulo $p^{n-m+1}$,
$$ \xymatrix{ J_{p}^m(X)_{n-m} \ar[r]^{(\pi_{p,r+s,s})_{n-r-s} } & J_{p}^s(X)_{n-m} }.$$
This is a morphism of schemes over $R_{n-m}$.
\item Local sections of the morphisms 
$$ \xymatrix{ J_p^1(X)_m \ar[r]^{\pi_{p,1}} & \ar[l]^{s} X_n } $$
are in bijection with local lifts of the Frobenius/$p$-derivations 
$$ \delta: \OO(X_{n+1}) \to \OO(X_n) = \OO(X_{n+1})/p^{n+1}. $$
\end{enumerate}
\begin{proof}
The problem is local.
Let $X = \Spec( R[x]/(f))$ (using multi-index notation).
The map $\pi_{m,s}$ gives a map of rings
 $$ R[x, \dot{x}, \ldots, x^{(s)}]/(f, \dot{f}, \ldots, f^{(s)}) = \OO(J_{p}^s(X)) \to \OO(J_{p}^m(X)) = R[x,\dot{x},\ldots, x^{(m)}]/(f,\dot{f},\ldots, f^{(m)}).$$
The first part of the proposition follows from an explicit description of the ideals given previously (in Theorem \ref{ideals}).
The second part follows from the characterization of lifts of the Frobenius on rings of the form $A_n = B/p^{n+1}$ (in \ref{local lifts}).
\end{proof} 
\end{thm}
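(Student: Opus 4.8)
The plan is to reduce both assertions to an affine computation with the explicit presentation of the jet rings. Since $J_p^r(-)$ is compatible with open immersions (stated at the start of this section) and both claims are local on $X$, it suffices to treat $X=\Spec(R[x]/(f))$ in multi-index notation, where $\OO(J_p^r(X))=R[x,\dot x,\dots,x^{(r)}]/(f,\dot f,\dots,f^{(r)})$, the higher $f^{(j)}$ being the iterated images of the $f_i$ under the universal $p$-derivation, computed by the linear-combination rule~\eqref{linear}. For part~(1) I would first identify the base ring of $J_p^r(X_n)$: working over $\OO(X_n)=R[x]/(f,p^{n+1})$, the relations $\delta_p^j(p^{n+1})=0$ for $0\le j\le r$ are forced into the presentation of $\OO(J_p^r(X_n))$, and by Theorem~\ref{ideals}(3) the ideal they generate is exactly $(p^{\,n+1-r})$; comparing presentations then gives $J_p^r(X_n)=J_p^r(X)_{n-r}$ --- the case $r=1$ is the Example following Theorem~\ref{up affine}, and the general case is the same computation with $\delta_p$ iterated. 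Granting this, $\pi_{m,s}\colon J_p^m(X_n)\to J_p^s(X_n)$ is the morphism induced by the inclusion of generator subrings $R[x,\dots,x^{(s)}]\hookrightarrow R[x,\dots,x^{(m)}]$; its source lies over $R_{n-m}$ and its target naturally over $R_{n-s}$, and since $n-m\le n-s$ the structure map of the target factors through $R_{n-m}$. Hence $\pi_{m,s}$ factors through $J_p^s(X)_{n-m}$ as a morphism of $R_{n-m}$-schemes.

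For part~(2) the plan is to chain the universal property of the first $p$-jet ring (Theorem~\ref{up affine}) with the characterization of lifts of the Frobenius in Theorem~\ref{local lifts}. On an affine open $\Spec A\subset X$, a local section of $J_p^1(X)_m\to X_n$ is an $A$-algebra homomorphism from the relevant truncation of $A_{p,1}$ to the relevant truncation of $A$, which by the universal property is exactly a $p$-derivation between the corresponding truncations --- with the indexing of the statement, a $p$-derivation $\delta\colon\OO(X_{n+1})\to\OO(X_n)$. By Theorem~\ref{local lifts}, applied locally with $A=B=\OO(X)$, such a $p$-derivation is the same datum as a lift of the Frobenius $\phi$ (that is, $\phi\equiv(-)^p\bmod p$), the two being interchanged by $\phi(a)=a^p+p\,\delta(a)$. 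Reading this equivalence in both directions yields the asserted bijection, and since everything is local no gluing is required.

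The step I expect to be the real obstacle is not conceptual but one of bookkeeping: getting the exact power of $p$ right. One must check that it is $p^{\,n+1-r}$ (and not an off-by-one) that is killed in $J_p^r(X_n)$, which forces careful use of the convention $Y_n=Y\otimes_R R_n=Y/p^{n+1}$ together with Theorem~\ref{ideals}(3) at the correct exponent, and correspondingly that ``factors through reduction modulo $p^{\,n-m+1}$'' names the right truncation. A secondary subtlety is that the higher relations $f^{(j)}$ are polynomials in the lower jet variables, so one should note that passing to the truncation $R_{n-m}$ is consistent with imposing them --- which holds precisely because, once $p^{n+1}=0$ is imposed on the base, the ideal of relations already contains the relevant power of $p$. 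Beyond these points, the comparison of presentations and the universal-property argument are routine.
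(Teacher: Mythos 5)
Your proposal is correct and follows essentially the same route as the paper: reduce to the affine case $X=\Spec(R[x]/(f))$, deduce part (1) from the description of the ideals $(p^{n},\delta_p(p^n),\dots)$ in Theorem \ref{ideals}, and deduce part (2) from the equivalence between $p$-derivations, lifts of the Frobenius, and $A$-algebra maps out of the truncated jet ring in Theorem \ref{local lifts}. Your version merely spells out the bookkeeping (the identification $J_p^r(X_n)=J_p^r(X)_{n-r}$ and the truncation indices) that the paper leaves implicit.
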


\begin{thm}
 
Let $X/R$ be flat where $R = W_{p,\infty}(k)$ where $k$ is perfect and characteristic $p$. 
\begin{enumerate}
\item If $i: U \hookrightarrow X$ is an open immersion of $R$-schemes of finite type then 
$$ J_p^1(i)_n: J_p^1(U)_n\hookrightarrow J_p^1(X)_n $$
is an open immersion.
\item If $j: Z \hookrightarrow X$ is a closed immersion of $R$-schemes of finite type then 
$$ J_p^1(j): J_p^1(Z) \hookrightarrow J_p^1(X) $$
is also a closed immersion.
\end{enumerate}
\end{thm}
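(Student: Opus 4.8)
The plan is to reduce both assertions to purely local, affine statements, using that jet space functors commute with gluing data once representability (Borger, Buium) is in hand. Since the problem is local on the target $X$ and an open immersion of schemes is, Zariski-locally, an open immersion between affines, and a closed immersion is, Zariski-locally, a surjection of coordinate rings, it suffices to treat these two affine model cases and then check that the local descriptions glue compatibly along overlaps. For this I would invoke the representability theorems of Buium (Theorem \ref{buium thm}) and the explicit presentation $J_p^1(\Spec A) = \Spec(A_{p,1})$, together with the description $A_{p,1} = R[x,\dot x]/(f,\dot f)$ when $A = R[x]/(f)$.

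For the closed immersion in part (2): write $j^{\#}\colon A \to A/I$ for a surjection with $I = (f_1,\dots,f_r)$. By the universal property of the first $p$-jet ring (Theorem \ref{up affine}), applying $(-)_{p,1}$ yields a ring map $A_{p,1} \to (A/I)_{p,1}$, and I claim this is surjective with kernel the ideal generated by $I$ and by $\{\delta_{p,1}(g) : g \in I\}$ — equivalently, by the $f_i$ and the $\dot f_i$. Surjectivity is immediate because $(A/I)_{p,1}$ is generated as a ring by the images of $A/I$ and of $\delta_{p,1}(A/I)$, both of which lift to $A_{p,1}$. Identifying the kernel is the content of the relation $(R[x]/(f,\dot f))$: the point is that imposing $g = 0$ in $A$ forces $\delta_{p,1}(g) = 0$ in $A_{p,1}$, by the defining relation \eqref{linear} for $\dot{}$, so the ideal generated by $I$ alone is not $\delta$-stable, but adjoining the $\dot f_i$ makes it so. Since a surjection of rings gives a closed immersion of spectra, and since these local pictures are compatible with localization (hence glue), $J_p^1(j)$ is a closed immersion; reducing mod $p^{n+1}$ preserves this.

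For the open immersion in part (1): if $U = \Spec(A_f) \hookrightarrow \Spec(A) = X$ is a basic open, I would use the quotient rule for $p$-derivations (the formula $\delta(1/f) = -f^p\delta(f)/(f^p(f^p+p\delta(f)))$ recorded in the remark after Theorem \ref{buium thm}) to see that $(A_f)_{p,1}$ is the localization of $A_{p,1}$ at the multiplicative set generated by $f$ and $f^p + p\dot f$; but $f^p + p\dot f = \phi(f)$ is a unit modulo $p$ whenever $f$ is, so after reduction mod $p^{n+1}$ this is just the localization of $(A_{p,1})_n$ at the image of $f$, i.e.\ a basic open of $J_p^1(X)_n$. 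For a general open immersion one covers $U$ by basic opens and uses that $J_p^1(-)_n$ respects the gluing, which is legitimate precisely because $J_p^1(-)_n$ is represented by a scheme over $X_n$ (Buium's theorem) and the formation is compatible with the restriction maps just computed.

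The main obstacle is the bookkeeping in the closed-immersion case: precisely identifying the kernel of $A_{p,1}\to (A/I)_{p,1}$ as the ideal $(I,\dot I)$ rather than something larger, and doing so in a presentation-independent way (or else checking independence of the presentation). This requires the careful use of the $\delta$-Leibniz and $\delta$-additivity rules to show the ideal generated by a set of $\delta$-generators together with their $\delta$-images is already a $\delta$-ideal, so that the quotient again carries a $p$-derivation with the right universal property. Everything else — surjectivity, the localization computation, the reduction mod $p^{n+1}$, and the globalization — is formal given the representability results and the explicit affine formulas already recorded in the excerpt.
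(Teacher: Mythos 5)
Your proposal follows essentially the same route as the paper: reduce to the affine case via representability, use the presentation $A_{p,1}=R[x,\dot x]/(f,\dot f)$ so that closed immersions correspond to adjoining the extra generators $\dot f_i$, and handle principal opens via the quotient rule, observing that $f^p+p\dot f$ becomes a unit times $f^p$ after reduction mod $p^{n+1}$ (the paper writes this out as the geometric series $\frac{-\dot g}{g^{2p}}\sum_{j\ge 0}(p\dot g/g^p)^j$). Your extra care in identifying the kernel of $A_{p,1}\to (A/I)_{p,1}$ as the $\delta$-ideal $(I,\dot I)$ fills in a step the paper dismisses as clear, but the argument is the same.
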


\begin{proof}
We first show the open immersion property is local for the functors $J_p^1(-)_n$.
Consider the case when $X = \Spec(A)$ with $A = R[x]/(f)$ (using multi-index notation). 
It is enough to show that the functor $J_p^1( )_n$ respects principal open immersions (one globalizes this affine result in the usual way taking direct limits of principal open immersions).
\begin{eqnarray*}
(A_{p,1})_g &=& ( R[x,\dot{x}]/(f,\dot{f}) )_g \\
(A_g)_{p,1} &-& R[x,\dot x, 1/g \dot{ (1/g)}]/(f,\dot{f})]\\
&=& R[x,\dot{x},1/g, \frac{-\dot{g} }{g^{2p} } \sum_{j\geq 0} \left( \frac{p\dot g}{g^p}\right)^j]/(f,\dot{f}) 
\end{eqnarray*}
so we clearly have 
$$ (A_{p,1})_g \hookrightarrow (A_g)_{p,1}. $$
Reducing modulo $p^{n+1}$ gives
$$ \frac{- \dot g }{g^{2p}} \sum_{j\geq 0} \left( \frac{-p \dot{g} }{g^p} \right)^j \in ( (A_{p,1})_g )_n. $$
This shows 
$$ ( (A_{p,1})_g )_n = ( (A_g)_{p,1})_n. $$
In the non-$p$-formal setting this result fails. 
We should also note that this is really the key observation construction of $p$-jet (Theorem \ref{buium thm}).

In the non-$p$-formal, non-affine case one needs to do more work.
For the full proof we refer the reader to \cite{Borger2011}.

In the affine setting 
\begin{eqnarray*}
X = \Spec \ R[x]/(f), & J_p^1(X) = \Spec \ R[x, \dot x]/(f,\dot f), \\
Z = \Spec \ R[x]/(f,g) & J_p^1(X) = \Spec \ R[x, \dot x]/(f,g, \dot f, \dot g).
\end{eqnarray*}
and it is clear that $\dot f$ and $\dot g$ give extra elements of the ideal. 
\end{proof}
We remark that the above theorem is true for higher order finite jets as well and the contructions are analogous. 

The above proposition implies that $J_p^1(Y)_n = (\pi_{p,1}^{-1})_n( Y_n)$ if $Y \hookrightarrow X$ is an open or closed immersion of $R$-schemes when $X$ is flat over $R$.

\subsection{$p$-Formal schemes}
The construction of $p$-jet spaces associated to a scheme $X/R$ where $R = W_{p,\infty}(\FFpbar)$ gives a system of maps
$$\xymatrix{
 & \vdots \ar[d]  & \vdots \ar[d] & \\
\cdots \ar[r]  & J_p^r(X)_n  \ar[r] \ar[d] & \ar[r] J_p^r(X)_{n-1} \ar[d] & \cdots  \\
\cdots \ar[r]  & J_p^r(X)_n \ar[r] \ar[d] &  \ar[r] J_p^r(X)_{n-1} \ar[d] & \cdots  \\
 & \vdots & \vdots  & 
}$$
The $p$-formal schemes $\widehat{J}_{p}^r(X):= \varinjlim \Jhat_p^r(X)_n$ used by Buium (in say \cite{Buium2005}) behave nicely. 
In some sense this means that the appropriate place for $p$-jet spaces would be some variant of the $p$-adic rigid analytic spaces. 
We make use of these limits in the subsequent sections.

\subsection{Examples}
We give some examples that we believe clarify the situation.

\begin{example}
Let $R = W_{p,\infty}(k)$ where $k$ is a perfect field of characteritic $p$. 
Let $X = \Spec( R[x]/(f))$ (using multi-index notation). 
There are no sections of the morphism of $R$-schemes
$$ \xymatrix{ 
J_p^1(X)_0 = J_p^1(X_1) \ar[r]^{\pi_{p,1}} & \ar[l] X_1 }. $$

This would correspond to a map of rings
$$ s^*: R[x,\dot{x}]/(f, p^2, \dot{f}, \dot{(p^2)}  ) = R_0[x,\dot{x}]/(f,\dot{f})  \to R_1[x]/(f) = R[x]/(f,p^2). $$ 
\end{example}

\begin{example}
Let $R = \ZZpurcom$. 
 Write 
  $$ \PP^1_R = \frac{\Spec( R[x]) \cup \Spec( R[y] )}{\sim }$$
where $\sim$ denotes gluing along $\Spec \ R[x,y]/(xy-1)$.
Then 
 $$\Jhat_p^1(\PP^1_R) = \frac{\Spf( R[x, \xdot]^{\widehat \ } ) \cup \Spf( R[y,\ydot]^{\widehat \ } )}{\sim} $$
where $\sim$ denotes gluing of the formal schemes along 
$$ \Spf( R[x,\xdot, y,\ydot]^{\widehat \ }/(xy-1,\xdot y^p + \ydot x^p + p \xdot \ydot) .$$
  
\end{example}
  
\section{Proofs}

\subsection{Step 1: Affine bundle structures}\label{sec: step1}

Recall that any smooth scheme $X/R$ of relative dimension $d$ admits a Zariski affine open cover opens $(U_i \to X)_{i\in I}$ such that there exist \'etale maps $f_i: U_i \to \AA^d_R$. 
Recall the following lemma:
\begin{lem}[\cite{Buium2005}, Section (3.2) ] \label{Lemma:trivialization lemma}
 Let $X$ and $Y$ be finite dimensional smooth schemes over $R = W_{p,\infty}(\FFpbar)$,
 \begin{enumerate}
  \item If $f: X\to Y$ is \'{e}tale then $\Jhat_p^1(X) \cong \Xhat \hattimes_Y \Jhat^1(Y)$.
  \item If $f: X\to \AA^d_R$ is \'etale then $\Jhat_p^1(X) \cong \Xhat \hattimes \AAhat^d_R$
 \end{enumerate}
\end{lem}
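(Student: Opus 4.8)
The plan is to prove this by working with functors of points, exploiting that an \'etale $f$ is formally \'etale and that $\Spec A \hookrightarrow \Spec W_{p,1}(A)$ is a square-zero thickening. First I would reduce (2) to (1): since $\AA^d_R = \Spec R[x_1,\dots,x_d]$ has no defining relations, $(R[x])_{p,1} = R[x,\dot x]$, so $J_p^1(\AA^d_R) \cong \AA^d_R \times_R \AA^d_R$ with the canonical projection $\pi_{p,1}$ identified with the first projection (forgetting the $\dot x$-coordinates); hence $\Jhat_p^1(\AA^d_R) \cong \AAhat^d_R \hattimes_R \AAhat^d_R$ over $\AAhat^d_R$. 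Applying (1) with $Y = \AA^d_R$ and base-changing then gives
$$ \Jhat_p^1(X) \;\cong\; \Xhat \hattimes_{\AAhat^d_R}\!\big(\AAhat^d_R \hattimes_R \AAhat^d_R\big) \;\cong\; \Xhat \hattimes_R \AAhat^d_R, $$
which is exactly (2) (the unadorned $\hattimes$ there being $\hattimes_R$).

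For (1), I would argue one reduction level at a time. By Theorem \ref{buium thm} (and Borger's representability theorem in the non-affine case), it suffices to exhibit, for every $A \in \CRing_{R_n}$, a bijection $J_p^1(X)_n(A) \cong (X_n \times_{Y_n} J_p^1(Y)_n)(A)$ that is natural in $A$ and compatible with the projections down to $X_n$, and then pass to the limit $\varinjlim_n$. Unwinding the jet functors, the left-hand side is $X(W_{p,1}(A))$, the right-hand side is $X(A) \times_{Y(A)} Y(W_{p,1}(A))$, and the natural comparison map sends $x\colon \Spec W_{p,1}(A) \to X$ to $(\bar x, f\circ x)$, where $\bar x$ is the reduction of $x$ along $W_{p,1}(A)\to A$. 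So the only thing to check is that, given $\bar x\colon \Spec A \to X$ and a lift $y\colon \Spec W_{p,1}(A) \to Y$ of $f\circ\bar x$, there is a unique $x$ lifting $\bar x$ with $f\circ x = y$. This is precisely the infinitesimal lifting property in the square
$$ \xymatrix{ \Spec A \ar[r]^{\bar x} \ar[d] & X \ar[d]^{f} \\ \Spec W_{p,1}(A) \ar[r]_{y} \ar@{-->}[ur] & Y } $$
where the left vertical arrow is a square-zero closed immersion because $\ker(W_{p,1}(A)\to A) = V_p(W_{p,1}(A))$ has square zero, and $f$ is \'etale, hence formally \'etale, so the dashed arrow exists and is unique; naturality in $A$ and compatibility with the projection to $X$ are then forced by this uniqueness. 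In the affine case this is nothing but the lemma on lifting $p$-derivations along \'etale morphisms proved in Section \ref{sec: $p$-ders} (applied with target $W_{p,1}(C')$), and I would phrase it that way to keep the argument elementary.

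The step I expect to be the main obstacle is not any of the above — each piece is short — but rather the bookkeeping needed to legitimately pass from an isomorphism of functors to one of ($p$-formal) schemes: the construction $A \mapsto A_{p,1}$ does not localize, so the Yoneda-style comparison only makes sense once one invokes representability of $J_p^1(-)_n$ over $X_n$ (Theorem \ref{buium thm}, resp.\ Borger) together with the identification $J_p^1(X_{n+1}) = J_p^1(X)_n$ guaranteeing that $J_p^1(X)_n$ genuinely parametrizes $W_{p,1}$-valued points. Once that framework is in place, the only substantive geometric input is the square-zero-ness of $V_p$ and the formal \'etaleness of $f$, both already available in the excerpt, plus a routine check that the level-$n$ isomorphisms are compatible with the transition maps, so that they glue to the asserted isomorphism of $p$-formal schemes.
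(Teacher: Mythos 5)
Your argument is correct, and it is essentially the argument the paper has already set up: the lemma itself is only cited from Buium, but the \'etale-lifting lemma for $p$-derivations at the end of Section \ref{sec: $p$-ders} --- whose proof is exactly your infinitesimal-lifting square with $C = W_{p,1}(C')$ and $I = V_p(W_{p,1}(C'))$ --- is the whole content of part (1), and your reduction of (2) to (1) via $J_p^1(\AA^d_R)\cong \AA^d_R\times_R\AA^d_R$ matches Remark \ref{sec:trivializations_of_jet_spaces}. The one caveat worth recording is that $V_p(W_{p,1}(A))$ is in general only nilpotent rather than square-zero (one computes $(0,a)(0,b)=(0,pab)$), but since you work level by level with $A\in\CRing_{R_n}$, where $p$ is nilpotent, unique lifting along the formally \'etale $f$ still applies and the proof goes through.
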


\begin{rem}\label{sec:trivializations_of_jet_spaces}
If $X = \Spec A$ and $f^*: R[T_1,\ldots,T_n]\to A$ is \'etale then $\OO(J^1(X)) = \OO(X)[\dot T_1,\ldots,\dot T_n]^{\widehat \ }$. 
Here we have identified the \'{e}tale parameters $T_i$ with their image under $f^*$.
\end{rem}

\subsection{Projections} \label{projs}
By a decomposition of $\PP^n$ (over $R$) we will mean a collection of linear forms $\lambda = \lbrace l_0,\ldots, l_n\rbrace$ in general position together with its associated linear subspaces.
For $0\leq d \leq n$ we will let $\lambda_r$ denote the collection of hyperplanes generated by $\lambda$ of dimension $d$.
For each such linear subspace $\Lambda$ we will let $\Lambda'$ denote is complementary subspace and $\pi^{\Lambda'}_{\Lambda}$ denote the linear projection onto $\Lambda$ with center $\Lambda'$.
For a linear subspace $\Lambda$ of $X$ and a point $x$ of $X$ we will let $\overline{x,\Lambda}$ denote the linear subspace spanned by $\Lambda$ and all the lines passing through points of $\Lambda$ and $x$.
Complementary subspaces have the property that $\overline{x,\Lambda} \cap \overline{x, \Lambda'} = \overline{x,\pi^{\Lambda}_{\Lambda}(x)}$ is the unique line passing through $x$ and the point of its projection.
We will denote this line by $L(\Lambda,\Lambda',x)$.
For a given $X \subset \PP^n$ and a complementary pair of subspace $\Lambda,\Lambda'$ we will let $X_{\Lambda}$ denote the open subset of $X$ where $\pi^{\Lambda'}_{\Lambda}$ restricted to $X$ is \`etale onto its image.

\subsection{Step 2: Existence of an $A_{n}$-structure} \label{sec: an structs}.

\label{sec:An a group}
Let $R = \ZZpurcom$. 
We define a subset of automorphisms of degree $n$ mod $p^{n}$
$$ A_n := \lbrace a_0 + a_1 T + p a_2 T^2 + \cdots + p^{n-1} a_n T^n : a_1 \in \OO_{X_n}^{\times}, a_i\in \OO_{X_n} \rbrace \subset \Auts( \AA^1_{R_{n-1}}). $$
\begin{prop}
$A_n \subset \Auts(\AA^1_{R_{n-1}})$ is a subgroup.
\end{prop}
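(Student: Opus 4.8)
The plan is to check the group axioms fiberwise. Since $\Auts(\AA^1_{R_{n-1}})$ is a sheaf of groups, and the conditions cutting out $A_n$ (a coefficient being a unit, or being divisible by a fixed power of $p$) are local, it suffices to fix an affine open $U\subseteq X_{n-1}$, set $S:=\OO(U)$, and show that the set $A_n(U)$ of polynomials $a_0+a_1T+\sum_{i=2}^n p^{i-1}a_iT^i$ with $a_1\in S^{\times}$, $a_i\in S$, is a subgroup of the opposite group of composition-automorphisms of $S[T]$. We read the coefficients $a_i\in\OO_{X_n}$ through the reduction $\OO_{X_n}\twoheadrightarrow\OO_{X_{n-1}}$; the one place where it matters that we work over $R_{n-1}$, i.e. that $p^nS=0$, is to kill degree-$(n+1)$ coefficients produced by composition. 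The identity $T=0+1\cdot T$ lies in $A_n(U)$, so it remains to establish closure under composition and under inverses. Throughout write $m^{+}:=\max(m,0)$.

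For closure, take $f=a_0+a_1T+\sum_{i=2}^n p^{i-1}a_iT^i$ and $g=b_0+b_1T+\sum_{i=2}^n p^{i-1}b_iT^i$ in $A_n(U)$ and expand $(f\circ g)(T)=a_0+a_1g(T)+\sum_{i=2}^n p^{i-1}a_i\,g(T)^i$. The key bookkeeping is: if a polynomial has the coefficient of $T^\ell$ in $p^{(\ell-1)^{+}}S$ for every $\ell$ (which holds for $g$), then its $i$-th power has the coefficient of $T^k$ in $p^{(k-i)^{+}}S$ — because a degree-$k$ monomial obtained as a product of $i$ monomials of degrees $d_1,\dots,d_i$ carries a factor in $p^{\sum_j(d_j-1)^{+}}S$, and $\sum_j(d_j-1)^{+}=k-\#\{j:d_j\ge1\}\ge k-i$. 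Multiplying by the leading $p^{i-1}$ and adding the contribution $a_1b_k$ from $a_1g(T)$ then shows that in $f\circ g$ the coefficient of $T^k$ lies in $p^{k-1}S$ for all $k\ge2$; in particular it vanishes for $k>n$, so $\deg(f\circ g)\le n$. The coefficient of $T$ is $a_1b_1$ plus an element of $pS$, hence (as $pS$ is nilpotent, so $1+pS\subseteq S^{\times}$) a unit. Thus $f\circ g\in A_n(U)$.

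For inverses, factor $f\in A_n(U)$ as $f=\ell\circ u$ with $\ell(T)=a_0+a_1T\in\AL_1(S)$ and $u(T)=T+\sum_{i=2}^n p^{i-1}(a_1^{-1}a_i)T^i$. Since $\AL_1(S)=S\rtimes S^{\times}$ is a group, $\ell^{-1}\in\AL_1(S)\subseteq A_n(U)$, so it is enough to invert the ``unipotent'' factor $u$. Solving $u(\widetilde u(T))=T$ with $\widetilde u(T)=T+\sum_{i\ge2}s_iT^i$ reduces to the recursion $s(T)=-\sum_{i=2}^n p^{i-1}(a_1^{-1}a_i)\,(T+s(T))^i$; because $T+s(T)=T+O(T^2)$ has no constant term, the coefficient of $T^k$ on the right involves only the $s_j$ with $j<k$, so the $s_k$ are determined one at a time, and the same valuation bookkeeping (together with the leading $p^{i-1}$) gives $s_k\in p^{k-1}S$ for every $k$ and $s_k=0$ for $k>n$. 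Hence $\widetilde u\in A_n(U)$; running the recursion on the other side produces a left inverse, which must coincide with $\widetilde u$, so $\widetilde u=u^{-1}$. Therefore $f^{-1}=\widetilde u\circ\ell^{-1}\in A_n(U)$ by closure under composition, and $A_n(U)$ is a subgroup.

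The only genuine work — and the place I expect to need care — is the coefficient-valuation bookkeeping: verifying that the defining property ``the coefficient of $T^k$ is divisible by $p^{k-1}$'' is preserved exactly, both by composition and by the inversion recursion, which all rests on the elementary inequality $\sum_j(d_j-1)^{+}\ge k-i$ when $d_1+\dots+d_i=k$. Everything else — the identity axiom, the affine-times-unipotent factorization, the fact that $\AL_1$ is a group, and the vanishing of high-degree coefficients once $p^n=0$ — is routine.
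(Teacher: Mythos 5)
Your proof is correct, and the closure-under-composition half is essentially the paper's argument: both rest on the observation that a degree-$k$ monomial in $g(T)^i$ arising from factors of degrees $d_1+\dots+d_i=k$ carries $p$ to the power $\sum_j(d_j-1)^{+}\ge k-i$, which combined with the prefactor $p^{i-1}$ gives divisibility by $p^{k-1}$. (Your use of $(d_j-1)^{+}$ is in fact slightly more careful than the paper's, which tacitly writes every factor as $p^{i_l-1}a_{i_l}T^{i_l}$ and so glosses over the constant term $b_0$ of $g$.) Where you genuinely diverge is the inverse step. The paper proceeds by induction on $n$: it takes the inverse $g$ (which exists a priori in $\Auts(\AA^1_{R_{n-1}})$), writes $g=g_{n-1}+p^{n-1}G$ using the inductive hypothesis, and derives a contradiction from the assumption that $G$ has degree $>n$, using that $a_1$ is a unit so composition with $f$ preserves top degree. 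You instead factor $f=\ell\circ u$ with $\ell\in\AL_1$ affine and $u$ unipotent, invert $\ell$ for free, and construct $u^{-1}$ by an explicit triangular recursion $s(T)=-\sum_i p^{i-1}c_i(T+s(T))^i$, verifying that the same valuation bookkeeping propagates through the recursion. Your route is constructive and self-contained (it does not presuppose that the inverse exists in the ambient group, and it reuses the closure lemma rather than a separate degree argument), at the cost of having to check the recursion is well-founded and that left and right inverses agree; the paper's route is shorter once one grants the ambient group structure, but its degree-contradiction step is the more delicate part to make fully rigorous. The affine-times-unipotent factorization you introduce is also a useful structural observation that the paper does not make explicit, and it foreshadows the role of $\AL_1$ in the reduction-of-structure-group arguments later in the paper.
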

\begin{proof}
We will first show that $A_n$ is closed under composition and then show that $A_n$ is closed under taking inverses. 
Let
\begin{eqnarray*}
f(T) &=& a_0 + a_1 T + p a_2 T^2 + \cdots + p^{n-1} a_n T^n, \\
g(T) &=& b_0 + b_1 T + p b_2 T^2 + \cdots + p^{n-1} b_n T^n  
\end{eqnarray*}
be elements of $A_n$. We claim that $g(f(T)) \in A_n$.  

If is sufficient to show that every term in 
     $$p^{j-1} b_j ( f(T) )^j, 1<j\leq n-1$$
of degree $d$ is divisible by $p^{d-1}$.

A typical term in the expansion above takes the form
    $$A=p^{j-1} \cdot (p^{i_1-1}a_{i_1} T^{i_1}) \cdots (p^{i_j-1}a_{i_j} T^{i_j}),$$
has degree greater than $d$. This means that $i_1 + i_2 +... + i_j = d$ and that 
   $p^{d-j} = p^{i_1 + i_2 + ... + i_j - j}$
which means that $A$ is of the form 
    $A=p^{d-1} a_{i_1} ... a_{i_j}  T^d$
and that every coefficient $T^d$ in the expansion of $g(f(T))$ is divisible by $p^{d-1}$. In particular note that $g(f(T))$ has degree $n$ mod $p^n$ which shows that $A_n$ is closed under composition. 

We will now show that if $f \in A_n$ then $f^{-1} \in A_n$. Fix $f(T) = a_0 + a_1 T + p a_2 T^2 + \cdots + p^{n-1} a_n T^n$. We proceed by induction on $n$. The base case is $n=2$ we have proved everything. Now suppose that 
    $$f(g(T)) = g(f(T)) = T \mod p^n$$
we need to show that $g \in A_n$. By induction we know that we can write (by rearranging terms if necessary) 
    $$g(T) = g_{n-1}(T) + p^{n-1} G(T)$$
where $G(T)$ has order greater than $n$ and 
   $$g_{n-1}(T) = b_0 + b_1 T + p b_2 T^2 + \cdots + p^{n-2} b_{n-1} T^{n-1}.$$
We will assume that $G(T)$ has degree greater strictly greater that $n$ and derive a contradiction. Examining 
   $$g(f(T)) = g_{n-1}(f(T)) + p^n G(f(T)) \mod p^n$$
we know from the previous proposition that
   $$\deg(g_{n-1}(f(T)))\leq n.$$
We also know that 
  $$p^{n-1} G(f(T)) = p^{n-1} G(a_0 + a_1 T)$$
and that the degree of $G(f(T))$ is exactly the degree of $G(T)$ since $a_1$ is a unit. This means that $g(f(T)) = T \mod p^n$ has degree strictly greater than $n$ which is a contradiction. This shows that $g(T)$ actually has degree $n$ and hence $g(T)\in A_n$ which completes the proof.
\end{proof}

In what follows we let $f_x$ and $f_y$ denote the usual partial derivatives of $f$ with respect to $x$ and $y$ respectively.

\begin{lem}[Local Computations]\label{lem:affine_transition} \label{lem:non-vanishing criteria}
Let $C=V(f)$ be a plane curve over $R=\ZZpurcom$ with $f\in R[x,y]$. Let $U=D(f_x)$ and $V=D(f_y)$ and $\eps_U$ and $\eps_V$ be the \'{e}tale projections to the $y$ and $x$ axes of $\AA^2$ and let $\psi_U: \J^1(U)\to \Uhat \hattimes \AAhat^1$ and $\psi_V: J^1(V)\to \Vhat \hattimes \AAhat^1$ be the associated affine bundle trivializations\footnote{ see Lemma \ref{Lemma:trivialization lemma}}. 

\begin{enumerate}
 \item If $f_x$ or $f_y$ is not identically zero modulo $p$ then 
 the transition map $\psi_{VU} := \psi_V \circ \psi_U^{-1}$ has the property that 
  $$\psi_{UV} \otimes_R R_n \in A_n$$
for each $n\geq 2$.
\item If $\deg(f)< p$ then $f_x$ or $f_y$ is not identically zero modulo $p$.
\end{enumerate}
\end{lem}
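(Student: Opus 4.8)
The plan is to attack the two assertions in sequence, since (2) is a clean elementary fact that feeds into (1). For part (2), suppose $f_x \equiv 0$ and $f_y \equiv 0$ modulo $p$. Writing $\bar f$ for the reduction of $f$ in $\FFpbar[x,y]$, this says $\partial \bar f/\partial x = \partial \bar f/\partial y = 0$. In characteristic $p$ this forces $\bar f$ to be a polynomial in $x^p$ and $y^p$ only; in particular every monomial appearing in $\bar f$ has total degree divisible by $p$, so either $\bar f = 0$ (which cannot happen for $C = V(f)$ a genuine curve, so we may exclude it) or $\deg \bar f \geq p$, hence $\deg f \geq p$, contradicting $\deg f < p$. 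So the hypothesis $\deg f < p$ guarantees that at least one of $f_x$, $f_y$ is nonzero mod $p$, which is exactly the hypothesis needed to invoke part (1).

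For part (1), the strategy is to compute the transition map $\psi_{VU}$ explicitly on the overlap $U \cap V = D(f_x f_y) \subset C$, using the description of $J^1$ on affine pieces from Remark~\ref{sec:trivializations_of_jet_spaces}. On $U = D(f_x)$ the map $\eps_U$ projecting to the $y$-axis is étale (because $f_x$ is invertible, the implicit function theorem / étaleness criterion applies), so by Lemma~\ref{Lemma:trivialization lemma}(2) we get a trivialization $\psi_U$ with étale parameter $y$, i.e. $\OO(J^1(U))^{\widehat{}} = \OO(U)^{\widehat{}}[\dot y]^{\widehat{}}$; similarly $\psi_V$ uses the étale parameter $x$. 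The transition function is then governed by how $\dot x$ and $\dot y$ are related on the overlap: differentiating the relation $f(x,y)=0$ with the $p$-derivation $\delta$ gives $\dot f = 0$, and expanding $\dot f$ via the additive and multiplicative $p$-derivation rules expresses $\dot y$ as a polynomial in $\dot x$ with coefficients in $\OO(U\cap V)$. The key point is to track the $p$-adic valuations of those coefficients: the term linear in $\dot x$ will have coefficient a unit (essentially $-(f_x/f_y)^p$ up to a unit, using that $f_x, f_y$ are invertible on the overlap), and the coefficient of $\dot x^j$ for $j \geq 2$ will be divisible by $p^{j-1}$ — precisely because each factor of $p$ in the $p$-derivation product rule $\delta(ab) = \delta(a)b^p + a^p\delta(b) + p\,\delta(a)\delta(b)$ is bookkept against the degree, exactly as in the $A_n$-closure computation in the Proposition just proved. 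Reducing mod $p^n$ then lands the transition map in $A_n$ for every $n \geq 2$.

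I expect the main obstacle to be the explicit valuation bookkeeping in the expansion of $\dot f = 0$: one must solve for $\dot y$ as a power series (or polynomial mod $p^n$) in $\dot x$ by inverting the leading unit coefficient, and then verify that the resulting expression, after reduction mod $p^n$, genuinely has the shape $a_0 + a_1 T + p a_2 T^2 + \cdots + p^{n-1} a_n T^n$ with $a_1$ a unit — the subtlety being that iterated substitutions could a priori degrade the valuations, and one needs the same degree-versus-valuation inequality $i_1 + \cdots + i_j - j \geq d - 1$ used in the group-closure argument to control it. Concretely I would set $F = \dot f \in \OO(U\cap V)^{\widehat{}}[\dot x, \dot y]$, observe $F$ is linear in $\dot y$ with unit leading coefficient $f_x^p + p\,\delta(f_x)\cdot(\text{stuff})$, solve $\dot y = G(\dot x)$, and prove by induction on $n$ (matching the structure of the preceding Proposition) that $G \bmod p^n \in A_n$; once $\psi_{VU} \in A_n$ is established, Lemma~\ref{rem: classes} will give the desired torsor structure, though that is deferred to later steps.
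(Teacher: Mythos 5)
Your proposal follows essentially the same route as the paper: part (2) is the standard characteristic-$p$ fact that vanishing of both partials mod $p$ forces the reduction of $f$ to lie in $k[x^p,y^p]$ and hence have degree at least $p$ (the paper phrases this via the Euler identity $S f_S + T f_T = \sum_k k f_k$ and the graded decomposition, but it is the same elementary computation), and part (1) is the same explicit expansion of $\delta f = 0$ on the overlap, solving for $\dot y$ as a series in $\dot x$ by induction on $n$ and controlling the terms $\alpha p^a \dot x^b$ via exactly the valuation-versus-degree inequality $a \geq b-1$ that you cite from the $A_n$-closure proposition. Two small slips that do not affect the structure: the coefficient one inverts to solve for $\dot y$ is $f_y^{\phi}(x^p,y^p) \equiv f_y(x,y)^p$ rather than $f_x^p$, and $\delta f$ is only linear in $\dot y$ modulo $p^2$ (it acquires a $p$-divisible term of degree $d$ in $\dot y$ at each level $p^{d-1}$), which is precisely why the inductive bookkeeping you describe is needed.
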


\begin{proof}
Assume without loss of generality that $f_y\neq 0 \mod p$. The maps $\varepsilon_U:U\to \mathbb A^1$ given by $(x,y)\mapsto y$ and $\varepsilon_V: V \to \mathbb A^1$ given by $(x,y)\mapsto x$ are \'etale. On these open sets we have $\mathcal O^1(U)= O(U)[\dot y]^{\hat \ }$ and $\mathcal O^1(V) = \mathcal O(V)[\dot x]^{\widehat p}$. This means we have 
 $$\OO(J^1(U\cap V)) = \OO(U\cap V)^1 = \mathcal O(U\cap V)[\dot x]^{\widehat p} = \mathcal O(U\cap V)[\dot y]^{\widehat p}.$$ 
Let $\psi_U:J^1(U)\to\Uhat\hattimes \AAhat^1$ be given by $t \mapsto \dot y$ and $\psi_V: J^1(V)\to \Vhat \hattimes \AAhat^1$ be given by $t \mapsto \dot x $. We can compute the transition map $\psi_V\circ \psi_U^{-1}\in \Auts(\AAhat^1)(U\cap V)$ by first computing what $\dot y$ is in terms of $\dot x$. We first have
	\begin{eqnarray*}
	\delta f &\equiv& \frac{1}{p}[f^{\phi}(x^p,y^p) - f(x,y)^p] + \nabla f^{\phi}(x^p,y^p)\cdot(\dot x,\dot y) \\
	&&  + \frac{p}{2}[ f_{xx}^{\phi}(x^p,y^p) \dot x^2 + 2 \fphi_{xy}(x^p,y^p)\dot x\dot y + f^{\phi}_{yy}(x^p,y^p)\ydot^2] \\
	&\equiv& 0 \mod p^2  \mbox{ in } \OO(U\cap V)[\ydot]^{\widehat \ }
	\end{eqnarray*}
where for a polynoimal $g(x) = a_0 + a_1x + \cdots + a_n x^n$ the polynomial $g^{\phi}(x) := \phi(a_0) + \phi(a_1)x + \cdots + \phi(a_n) x^n$ as usual and $\nabla f= (f_x,f_y)$ is the usual gradient from calculus.

Let 
\begin{eqnarray*}
 A &=& R + \fphi_x(x^p,y^p) \xdot + p\fphi_{xx}(x^p,y^p) \xdot^2/2,\\
 B &=& \fphi_y(x^p,y^p)+p\fphi_{xy}(x^p,y^p)\xdot,\\
 C &=& \fphi_{yy}(x^p,y^p)/2,\\
 R &=& (\fphi(x^p,y^p)-f(x,y)^p)/p
\end{eqnarray*}
then, solving the equation $A+ B\ydot + C \ydot^2 =0$ gives 
\begin{eqnarray*}
  \ydot = -\frac{A}{B}+p\frac{A^2C}{B^3}.
\end{eqnarray*}
Since
\begin{eqnarray*}
pB^{-3}A^2C &=& p\frac{(R+\fphi_x(x^p,y^p)\xdot )^2 \fphi_{yy}(x^p,y^p)}{2\fphi_y(x^p,y^p)^3}\\
AB^{-1} &=& \frac{1}{\fphi_y(x^p,y^p)} [ R+\fphi_x(x^p,y^p) \xdot + p \fphi_{xx}(x^p.y^p) \xdot^2/2 \\
		&&- p \frac{\fphi_{xy}(x^p,y^p)\xdot}{\fphi_y(x^p,y^p)}(R+\fphi_x(x^p,y^p) \xdot) ]
\end{eqnarray*}
we get
\begin{equation}\label{eqn:affine_transition}
 \ydot = \alpha + \beta \xdot + p\gamma \xdot^2
\end{equation}
where
\begin{eqnarray*}
 \alpha &=& -\frac{R}{\fphi_y(x^p,y^p)} + p \frac{R^2 \fphi_yy(x^p,y^p)}{2\fphi_y(x^p,y^p)^3}\\
 \beta &=& -\frac{-\fphi_x(x^p,y^p)}{\fphi_y(x^p,y^p)} + p \frac{\fphi_{xy}(x^p,y^p) R}{\fphi_y(x^p,y^p)^2}+\frac{pR\fphi_x(x^p,y^p)\fphi_yy(x^p,y^p)}{\fphi_y(x^p,y^p)^3}, \\
 \gamma &=& -\frac{\fphi_{xx}(x^p,y^p)}{2\fphi_y(x^p,y^p)}+ \frac{\fphi_{xy}(x^p,y^p)\fphi_x(x^p,y^p)}{\fphi_y(x^p,y^p)^2} + \frac{\fphi_x(x^p,y^p)^2 \fphi_{yy}(x^p,y^p)}{2\fphi_y(x^p,y^p)^3}.
\end{eqnarray*}

We will now show that $\ydot \equiv a_0 + a_1 \xdot + p a_2 \xdot^2 + \cdots + p^{n} a_{n+1} \xdot^{n+1} \mod p^{n+1}$ by induction. 
We have proven the base case and proceed to solve for $\ydot$ in terms of $\xdot$ as we did before inductively. 
As before we have 
$$\delta( f(x,y) ) = \frac{1}{p} \left(  \fphi(x^p + p \xdot, y^p + p \ydot) - f(x,y)^p \right)=0.$$ We use the expansion 
 $$\fphi(x^p + p\xdot, y^p + p \ydot) = \sum_{d\geq 0} p^{d-1} h_d(\xdot,\ydot)$$ 
where $h_d$ are homogeneous polynomials of degree $d$ in $\xdot$ and $\ydot$ with coefficients in $R[x,y]/(f)$; this gives
\begin{equation}\label{eqn:aff}
 \frac{\fphi(x^p,y^p) - f(x,y)^p}{p} + \sum_{d=1}^n p^{d-1} h_d(\xdot,\ydot) \equiv 0 \mod p^{n+1}.
\end{equation}
By inductive hypothesis we may assume $\ydot = A + p^n B$ where $A = a_0 + \sum_{j=1}^n p^{j-1} a_j \xdot^j$. Expanding the homogeneous polynomials gives
  $$h_d(\xdot, \ydot) = h_d(\xdot,A + p^n B)= h_d(\xdot, A) + \frac{\partial h_d}{\partial \ydot}(\xdot,A) p^n B \mod p^{n+1}$$
and substituting into equation \ref{eqn:aff} we get 
\begin{equation}\label{eqn:aff2}
r + \sum_{d=1}^n p^{d-1} \left( h_d(\xdot ,A) + \frac{\partial h_d}{\partial \ydot}(x,A) p^n B \right)
= r + \sum_{d=1}^n p^{d-1} h_d(\xdot,A) + \sum_{d=1}^n p^{d-1} \frac{\partial h_d}{\partial \ydot}(\xdot,A) p^n B
\end{equation}
where $r = \frac{\fphi(x^p,y^p) - f(x,y)^p}{p}$. Note that the left terms on the right side of equation \ref{eqn:aff2} can be written as 
$$r + \sum_{d=1}^n p^{d-1} h_d(\xdot,A) = p^n C$$
and the term on the right can be written as
$$\sum_{d=1}^n p^{d-1} \frac{\partial h_d}{\partial \ydot}(\xdot,A) p^n B \equiv \frac{ \partial h_1}{ \partial \ydot}(\xdot,A) p^n B \mod p^{n+1}.$$
Using the fact that $h_1 = \fphi_x(x^p,y^p) \xdot + \fphi_y(x^p,y^p) \ydot$ we have $\frac{\partial h_1}{\partial \ydot}(x,A) = \fphi_y(x^p,y^p)$ which tells us that $p^n C + \fphi_y(x^p,y^p) p^n B \equiv 0 \mod p^{n+1}$
and hence that $C + \fphi_y(x^p,y^p) B \equiv 0 \mod p$ and finally that
 $$B = -C /\fphi_y \mod p.$$
It remains to show that $B$ has degree less than or equal to $n$ in $\xdot$. 

We note that $p^n C = r + \sum_{d=1}^{n+1} p^{j-1} h_d(\xdot,A) \mod p^{n+1}$ where we can write $h_d(S,T) = \sum_{j+k= d} a_{j,k}^d S^j T^k$, where $a^d_{j.k}\in R[S,T]/(f)$. We can expand the expression 

\begin{equation}\label{eqn:aff3}
p^{d-1} h_d(\xdot, A) = p^{d-1} h_d(\xdot, a_0 + a_1 \xdot + \cdots + p^{n-2} a_{n-1} \xdot^{n-1})
\end{equation}
so that its general term takes the form
$$p^{d-1} a_{i,j}^d \xdot^i ( a_0 + a_1 \xdot + p a_2 \xdot^2 + \cdots + p^{n-2} a_{n-1} \xdot^{n-1})^j.$$
We expand this general term further to get
\begin{eqnarray*}
&& (a_0 + a_1 \xdot + p a_2 \xdot^2 + \cdots + p^{n-2} a_{n-1} \xdot^{n-1})^j \\
&=& \sum_{j_0 + j_1 + \cdots + j_{n-1} = j} a_0^{j_0} (a_1 \xdot)^{j_1} (p a_2 \xdot^2)^{j_2} \cdots (p^{n-2} a_{n-1} \xdot^{n-1} )^{j_{n-1}} \\
&=&\sum_{j_0 + j_1 + \cdots + j_{n-1} = j} a_0^{j_0} a_1^{j_1} a_2^{j_2} \cdots a_{n-1}^{j_{n-1}} p^{j_2 + 2 j_3 + 3 j_4 + \cdots + (n-2)j_{n-1}} \xdot^{j_1 + 2 j_2 + 3 j_3 + \cdots + (n-1)j_{n-1}}
\end{eqnarray*}
So that a general term of equation \ref{eqn:aff3} takes the form
$$\alpha p^a \xdot^b$$
where $\alpha \in \OO(U)$ and 
\begin{eqnarray*}
i+j &=& d\\
a &=& d-1 + j_2 + 2 j_3 + \cdots + (n-2) j_{n-1}\\
b &=& i + j_1 + 2j_2 + \cdots + (n-1)j_{n-1}\\
j &=& j_0 + j_1 + \cdots + j_{n-1} 
\end{eqnarray*}
Using these relations we show
\begin{eqnarray*}
a &=& d-1 + j_2 + 2 j_3 + \cdots + (n-2) j_{n-1}\\
&=& i+j -1 + j_2 + 2 j_3 + \cdots + (n-2) j_{n-1}\\
&=& i-1 + j_0 + j_1 + 2j_2 + 3 j_3 + \cdots + (n-1) j_{n-1}\\
&=& i-1 + j_0 + (b-i)\\
&=& b-1 + j_0 
\end{eqnarray*}
Which tells us the $a=b-1 + j_0 \geq b-1$. 
Notice that the degree of the general term is $b$ and we want to show that $b\leq n+1$. Suppose this is not the case and that $b>n+1$. 
This implies that $a >n$ which implies $\alpha p^a \xdot^b \equiv 0 \mod p^{n+1}$; 
so such a term doesn't contribute to $\ydot \mod p^{n+1}$. This concludes the proof.

We will now prove the second part of the theorem.
Let $f \in R[S,T]$, and write $ f(S,T) = \sum_{k=0}^d f_k(S,T)$ where $f_k$ homogeneous of degree $d$ i.e. $f_0 = a_{00}$, $f_1 = a_{10} S + a_{01} T$, $f_2 = a_{20} S^2 + a_{11} ST + a_{02} T^2$ and so on. 
We have $f_d\neq 0$ since $f$ is of degree $d$

Using this decomposition we can compute the partial derivatives term-wise to get 
 $$ \frac{\partial f}{\partial S} = \sum_{k=1}^d \frac{\partial f_k}{\partial S}, \ \ \ \  \frac{\partial f}{\partial T} = \sum_{k=1}^d \frac{\partial f_k}{\partial T}.$$ 
 
 If $\frac{\partial f }{\partial S} \equiv \frac{\partial f}{\partial T} \equiv 0 \mod p$ identically then $$S \frac{\partial f}{\partial S} + T \frac{\partial f}{\partial T} = \sum_{k=1}^d \left( S \frac{\partial f_k}{\partial S} + T \frac{\partial f_k}{\partial T}\right)  = \sum_{k=1}^d k f_k \equiv 0 \mod p$$ and since $R_0[S,T] \equiv \bigoplus_{k\geq 0} (R_0[S,T])_k$ we must have that $k f_k(S,T) \equiv 0 \mod p$ for $k=1,\ldots,d$. 
 If $p\nmid k$ this means that $f_k(S,T)=0$ which tells us that
  $$ f(S,T) = h(S^p,T^p) + p g(S,T) .$$
Note in particular that 
 $$ \frac{\partial f}{\partial S} \equiv \frac{\partial f }{\partial T} \equiv 0 \mod p \implies \deg(f) \geq p. $$
\end{proof}

\begin{lem}\label{lem: etale property}
Let $X \subset \PP^n$ be a smooth projective curve. 
Suppose $\Lambda$ and $\Lambda'$ are complementary linear subspaces of $\PP^n$. 
 $\pi = \pi^{\Lambda'}_{\Lambda}: X \setminus (X \cap \Lambda') \to \Lambda$ is \'etale at $x \in X$ if and only if $\overline{ x, \pi(x)} \neq T_{X,x}$.
 
 If $X = V(f(x,y))$ is an affine plane curve, the projection to the $x$-axis is \`etale if and only if $\partial f/\partial y \neq 0$.
 Similarly for projections to the $y$-axis.
\end{lem}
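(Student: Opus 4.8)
The plan is to reduce the assertion to a computation of the differential of the linear projection, using that the fibres of a linear projection are open subsets of linear subspaces. Write $\pi=\pi^{\Lambda'}_{\Lambda}\colon \PP^n\setminus\Lambda'\to\Lambda$, and recall from Section~\ref{projs} that for $q\in\Lambda$ the fibre $\pi^{-1}(q)$ is the dense open subset $\overline{q,\Lambda'}\setminus\Lambda'$ of the linear subspace $\overline{q,\Lambda'}$; in particular $\pi$ is a smooth morphism. Since $X$ is a smooth projective curve, $\pi\vert_X$ is unramified at a point $x\notin\Lambda'$ if and only if the tangent map $d(\pi\vert_X)_x=(d\pi_x)\vert_{T_xX}$ is injective, i.e. if and only if $T_xX\cap\ker(d\pi_x)=0$; on the locus $X_\Lambda$ the image $\pi(X)$ is smooth at $\pi(x)$, so there ``unramified at $x$'' coincides with ``\'etale onto its image at $x$'', and the lemma amounts to saying this holds precisely when $T_{X,x}\neq\overline{x,\pi(x)}$. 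Note that the degenerate case $X\subseteq\overline{x,\Lambda'}$ needs no separate treatment: then $\pi\vert_X$ is constant, $d(\pi\vert_X)_x=0$, and $T_{X,x}\subseteq\overline{x,\Lambda'}$, so both sides of the asserted equivalence fail.

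The next step is to identify $\ker(d\pi_x)$. Because $\pi$ is smooth with fibre through $x$ the dense open $\overline{x,\Lambda'}\setminus\Lambda'$ of the linear subvariety $\overline{x,\Lambda'}$, the kernel $\ker(d\pi_x)$ equals the tangent space of $\overline{x,\Lambda'}$ at $x$; this is immediate from the Euler sequence after choosing homogeneous coordinates in which $\Lambda'=V(x_0,\dots,x_d)$, $\Lambda=V(x_{d+1},\dots,x_n)$ and $\pi=[x_0:\dots:x_d]$. Under the usual identification of a linear subspace of $T_x\PP^n$ with the linear subspace of $\PP^n$ it spans through $x$, the subspace $\ker(d\pi_x)$ corresponds to $\overline{x,\Lambda'}$ and the line $T_xX$ corresponds to the embedded projective tangent line, which we write $T_{X,x}$; hence $\pi\vert_X$ is \'etale at $x$ if and only if $T_{X,x}$ is not ``vertical'', i.e. $T_{X,x}\not\subseteq\overline{x,\Lambda'}$. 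Since $T_{X,x}$ is a line through $x$ and $x\notin\Lambda'$, a vertical $T_{X,x}$ is forced to be the line joining $x$ to some point of $\Lambda'$. When $\Lambda'$ is a point --- in particular for the plane-curve projections below --- the fibre $\overline{x,\Lambda'}$ is itself the single line $\overline{x,\pi(x)}=L(\Lambda,\Lambda',x)$ joining $x$ to $\pi(x)$, and the criterion $T_{X,x}\not\subseteq\overline{x,\Lambda'}$ becomes exactly $T_{X,x}\neq\overline{x,\pi(x)}$, the stated equivalence.

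For the affine plane-curve statement I would apply this with $\AA^2\hookrightarrow\PP^2$ the standard chart: the projection $(x,y)\mapsto x$ to the $x$-axis is $\pi^{\Lambda'}_{\Lambda}$ with centre the point at infinity $\Lambda'=[0:1:0]$ of the vertical lines and $\Lambda$ the $x$-axis, whose fibre through $P=(a,b)$ is the vertical line $\{x=a\}=\overline{P,\pi(P)}$. If $X=V(f)$ is smooth then $(f_x(P),f_y(P))\neq(0,0)$ and the embedded tangent line at $P$ is $V\big(f_x(P)(x-a)+f_y(P)(y-b)\big)$; by the criterion above $\pi\vert_X$ fails to be \'etale at $P$ exactly when this line is $\{x=a\}$, i.e. when $f_y(P)=0$ (and then necessarily $f_x(P)\neq0$ by smoothness). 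Thus $\pi\vert_X$ is \'etale at $P$ if and only if $\partial f/\partial y$ is nonzero at $P$, and is \'etale (over the relevant open set) if and only if $\partial f/\partial y$ vanishes nowhere on $X$; the statement for the projection to the $y$-axis follows by exchanging the roles of $x$ and $y$ (and of the centres $[0:1:0]$ and $[1:0:0]$), and gives the condition $\partial f/\partial x\neq0$.

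The step I expect to require the most care is the identification of $\ker(d\pi_x)$ with the tangent space of the linear fibre $\overline{x,\Lambda'}$ at $x$: although ``linear projections are smooth with linear fibres'' is classical, a self-contained proof calls either for the explicit Euler-sequence computation in coordinates adapted to the pair $(\Lambda,\Lambda')$ or for the description of $\PP^n\setminus\Lambda'$ as an affine-space bundle over $\Lambda$. A second, more cosmetic point is the meaning of ``\'etale at $x$'' at a point where $\pi(X)$ could be singular: this is harmless on $X_\Lambda$, where by definition $\pi(X)$ is smooth at $\pi(x)$, and in any case the real content of the lemma is the tangent-transversality criterion computed above, which is precisely the assertion about the differential $d(\pi\vert_X)_x$.
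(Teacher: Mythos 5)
Your proof is correct and follows essentially the same route as the paper's: both reduce \'etaleness of $\pi|_X$ at $x$ to injectivity of $d(\pi|_X)_x$, i.e.\ to $T_xX\cap\ker(d\pi_x)=0$ --- the paper via an explicit Jacobian-rank computation in affine coordinates, you via the geometric identification of $\ker(d\pi_x)$ with the tangent space of the linear fibre $\overline{x,\Lambda'}$. Your observation that the literal criterion $T_{X,x}\neq\overline{x,\pi(x)}$ coincides with the actual transversality condition $T_{X,x}\not\subseteq\overline{x,\Lambda'}$ only when $\Lambda'$ is a point is a welcome extra precision that the paper's rank argument glosses over.
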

\begin{proof}[Proof]
 By change of coordinates and by localness of the problem one only needs to consider projections $\pi:\AA^n_R \to \AA^r_R$ defined by $\pi(x_1,\ldots,x_r,\ldots, x_n) = (x_1,\ldots,x_r)$ and curves of the form $X = \Spec \ R[x_1,\ldots,x_n]/(f_1,\ldots,f_e)$.
 
 Let $a$ be a point of $X$ not in $\Lambda$. 
 The lines of projection $\overline{a,\pi(a)}$ are the unique lines connecting the $a$ and $\pi(a)$ which one can compute explicitly.
 
 Let $J(a)$ be the jacobian of $f = (f_1,\ldots, f_e)$ with respect to the variables $(x_{r+1},\ldots, x_{n})$ evaluated at $a$.
 
 We use the following two facts:
   \begin{enumerate}
    \item \label{first} The condition on $\pi$ being \`etale is equivalent to the $J(a)$ having maximal rank.
    \item \label{second}  The condition that $\overline{a,\pi(a)} \subset T_{X,a}$ is equivalent to $J(a) \cdot \left[ \begin{matrix} a_{r+1} \\ \vdots \\ a_n \end{matrix} \right] =0.$
   \end{enumerate}

 Suppose that $\pi$ is \`etale at $a \in X$. 
 By the property \ref{first}, $J(a)$ has full rank.
 This implies there exists a left inverse $K$ such that $K\cdot J(a)$ is the $n-r\times n-r$ identity matrix.
 The existence of such a $K$ contradicts $\overline{a ,\pi(a)} \subset T_{X,a}$ in view of property
 \ref{second}.
 
 Conversely suppose that $\overline{a,\pi(a)}$ is not contained in $T_{X,a}$.
 This is equivalent to 
 $$J(a) \cdot \left[ \begin{matrix} a_{r+1} \\ \vdots \\ a_n \end{matrix} \right] \neq 0,$$
 by property \ref{second}.
 This implies that $J(a)$ has rank at least one.
 Since $J(a)$ has rank at most one it has full rank which is equivalent to \`etaleness by property \ref{first}.
 
 The second property is a special case of the first. 
\end{proof}

Figure \ref{fig:etale projection from line} shows the projection from a line to another line.

\begin{figure}[h] 
\begin{center}
 \includegraphics[scale=0.5]{./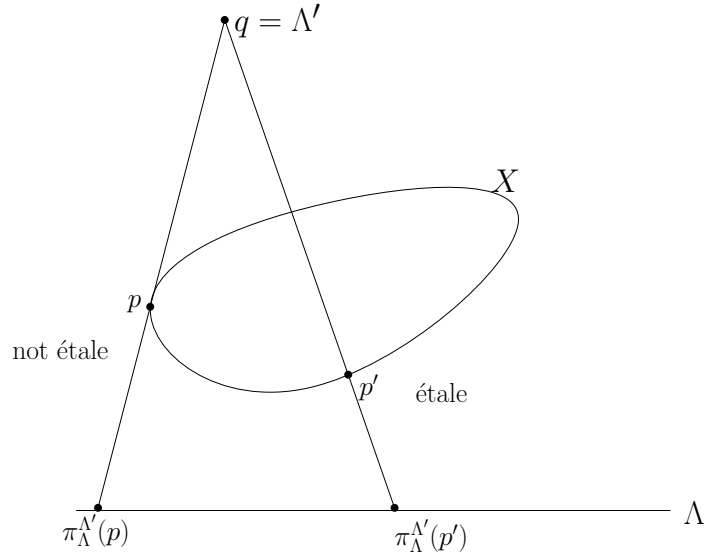}
\end{center} 
\caption{ A projection in to $\Lambda$ with center $\Lambda'$.  \label{fig:etale projection from line}}

\end{figure}

\begin{lem}\label{lem: injectivity}
  Let $X\subset \PP^n_R$ be a smooth irreducible curve of degree $d<p$.
  Let $\pi_1$ and $\pi_2$ be projections onto lines in $\PP^2_{R} \subset \PP^n_R$ where the centers of projections do not intersect $X$.
  
  Let $\vareps_1,\vareps_2:U \to \AA^1_R$ be restrictions of $\pi_1$ and $\pi_2$ so that they are both \'{e}tale onto their image.
  
  \begin{enumerate}
   \item The map $\sigma := (\eps_1 \times \eps_2)^*: R[S,T] \to \OO(U)$ has the property that 
   the induced map $ \sigma_0: R_0[S,T]/(\fbar) \to \OO(U)/p$ is injective.
 
   \item Let $\psi_1,\psi_2: J^1(U) \isom \AAhat^1 \hattimes \Uhat$ denote the affine bundle trivializations associated to $\vareps_1$ and $\vareps_2$ respectively. 
   For every $n\geq 1$ we have
 $$ \psi_{21} \otimes_R R_n \in A_{n}.$$ 
  \end{enumerate}
\end{lem}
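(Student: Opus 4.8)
The plan is to prove the two statements essentially independently, with part (1) feeding into part (2). For part (1), the key point is that a projection $\pi_i$ onto a line $\PP^1 \subset \PP^n$ restricted to the affine chart $U$, when composed appropriately, realizes $U$ as an étale cover of an affine line, so $\eps_1, \eps_2$ are (restrictions of) linear projections of a plane curve of degree $d$. After a linear change of coordinates in $\PP^2_R$ I may assume the two centers give the standard projections to the $x$- and $y$-axes of a plane affine curve $C = V(f)$ with $\deg f = d < p$. Then $\sigma = (\eps_1 \times \eps_2)^*$ sends $S \mapsto x$, $T \mapsto y$, and I want $\sigma_0 : R_0[S,T]/(\fbar) \to \OO(U)/p$ to be injective. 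Since $\fbar$ is (up to a unit) the defining equation of the reduced plane curve $C_0 = V(\fbar)$ and $C_0$ is the image of $U_0$ under the product map — or rather $U_0$ is an open dense subset of $C_0$ and the product map is a locally closed immersion on that open set — injectivity of $\sigma_0$ follows because $R_0[S,T]/(\fbar)$ is a domain (as $C$ is irreducible of degree $d<p$, hence $\fbar$ is irreducible, using Lemma~\ref{lem:non-vanishing criteria} part (2) to ensure $\fbar$ does not degenerate to a $p$-th power) and $\sigma_0$ is nonzero on the generic point. So the main content of (1) is: reduce to the plane-curve normal form, and check that irreducibility plus $d<p$ forces $\fbar$ irreducible so that the coordinate ring is a domain on which any dominant map induces an injection.

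For part (2), I would combine Lemma~\ref{lem:non-vanishing criteria} with part (1). By part (1), at least one of $\partial f/\partial x$, $\partial f/\partial y$ is not identically zero mod $p$; together with Lemma~\ref{lem: etale property} this says that the open set $U$ is covered by the loci $D(f_x)$ and $D(f_y)$ on which the two standard projections are étale, and on $D(f_x)\cap D(f_y)$ the transition map between the two associated affine-bundle trivializations lies in $A_n \otimes_R R_n$ for all $n \geq 2$ by Lemma~\ref{lem:affine_transition} part (1). The trivializations $\psi_1, \psi_2$ associated to $\eps_1, \eps_2$ differ from the ``standard'' étale-parameter trivializations of Remark~\ref{sec:trivializations_of_jet_spaces} by elements of $A_n$ (indeed the change between two étale parametrizations by the same coordinate is an element of $\OO^\times$, i.e.\ lies in $A_{n}$), so $\psi_{21} = \psi_2 \circ \psi_1^{-1}$ is a composite of three elements of $A_n$, and since $A_n$ is a group (first Proposition of Section~\ref{sec: an structs}) we conclude $\psi_{21}\otimes_R R_n \in A_n$. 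The case $n=1$ is handled separately and is easy since $A_1 = \AL_1(\OO_{X_1})$ and mod $p$ the transition map is visibly affine-linear by the explicit formula \eqref{eqn:affine_transition}.

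I expect the main obstacle to be part (1): making precise the claim that $\sigma_0$ is injective. The subtlety is that $U$ need not be the full curve $C$ minus the centers, and one must be careful that $U_0$ is dense in an irreducible component of $C_0$ and that the product map $(\eps_1\times\eps_2)$ does not collapse $U_0$ onto a lower-dimensional subscheme — this is where the hypothesis that the two centers are distinct (so $\eps_1 \times \eps_2$ is generically finite onto its image, in fact birational onto the plane curve) and that $d < p$ (so reduction mod $p$ does not introduce an inseparable degeneracy, by Lemma~\ref{lem:non-vanishing criteria}(2)) both get used. Once injectivity of $\sigma_0$ is in hand, part (2) is a relatively formal assembly of the previous lemmas and the group structure on $A_n$.
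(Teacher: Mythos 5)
Your part (2) follows the paper's route: use the hypothesis $d<p$ together with Lemma \ref{lem:non-vanishing criteria}(2) and Lemma \ref{lem: etale property} to get invertibility of a partial derivative on $U$, then invoke the local computation of Lemma \ref{lem:affine_transition} and the group structure on $A_n$. That part is fine (the paper does not bother to separate out $n=1$, but your observation that this case is degenerate is harmless). The problem is in part (1), exactly where you predicted the difficulty would be.

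The gap is your parenthetical claim that ``$C$ is irreducible of degree $d<p$, hence $\fbar$ is irreducible, using Lemma \ref{lem:non-vanishing criteria}(2) to ensure $\fbar$ does not degenerate to a $p$-th power.'' Irreducibility of $f$ over $R$ does not descend to irreducibility of $\fbar$ over $R_0$: for instance $S^2+T^2-p$ is irreducible over $W_{p,\infty}(\FFpbar)$ but reduces to $(S+iT)(S-iT)$, has degree $2<p$, and has nonvanishing partials, so Lemma \ref{lem:non-vanishing criteria}(2) cannot rule this phenomenon out --- that lemma only excludes $\fbar$ being a polynomial in $S^p,T^p$, which is much weaker than irreducibility. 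Since your whole argument for injectivity of $\sigma_0$ rests on $R_0[S,T]/(\fbar)$ being a domain, this is a genuine hole, not a presentational one. What actually closes it (and what the paper does) is a geometric degree count: $\overline{\pi_0(U_0)}$ is irreducible (image of the irreducible $U_0$) and sits as a closed subscheme inside $(\overline{\pi(U)})_0=V(\fbar,p)$; since $\deg\overline{\pi_0(U_0)}=\deg X_0=d$ while $\deg\fbar\le\deg f=d$, the containment forces $(\fbar,p)$ to equal the full kernel $J$, whence $\ker(\sigma_0)=(\fbar)$ and, only as a \emph{consequence}, $\fbar$ is irreducible. In other words, the irreducibility of $\fbar$ is the output of the degree comparison you would need to supply, not an input you can assume; once that is in place, your ``prime kernel in a one-dimensional domain must be zero because the image is not a field'' step is a perfectly good (and slightly slicker) way to finish.
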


\begin{figure}[h]
\begin{center}
 \includegraphics[scale=0.5]{./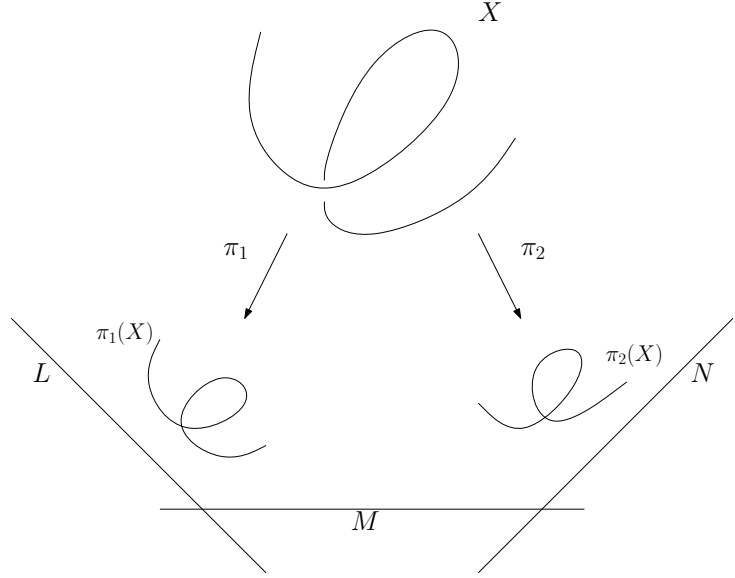}
\end{center} 
\caption{ A curve $X \subset \PP^N$ with two projections onto $\Lambda_1 = \overline{L,M}$ and $\Lambda_2 = \overline{M,N}$ both isomorphic to $\PP^2$. 
The \'{e}tale projections $\eps_L,\eps_M$ and $\eps_N$ to the lines $L,M$ and $N$ which induce the trivializations on the $J^1(X)$ factor through the projections $\pi_1$ and $\pi_2$ \label{fig:projections}}
\end{figure}

\begin{proof}

In what follows an overline will denote a Zariski closure.

Let $\vareps_1^*(T) = x$ and $\vareps_2^*(T)=y$ where $T$ is the \'etale parameter on $\AA^1$. 
Define $\sigma: R[S,T]\to \OO(U):=B$ by $S \mapsto x$ and $T\mapsto y$. 
Since the image of $\sigma$ is an integral domain we know that $\ker(\sigma)$ is a prime ideal.
Since $R[S,T]$ is a UFD and the $\ker(\sigma)$ has height 1 we know that there exists some irreducible $f \in R[S,T]$ such that $\ker(\sigma) = (f)$. 
This $f$ is the minimal relation among $x$ and $y$ and we have the equation $f(x,y)=0$. 
Geometrically we have 
$$\overline{\eps_1 \times \eps_2(U)}= V(f) \subset \AA^2,$$ 
where $f$ is a dehomogenization of $F$ where $F$ defines $\pi(X)=V(F) \subset \PP^2$.
We know that $f$ is irreducible by topological considerations. 
Note that the image is not necessarily non-singular or even flat.

We will now show that $\overline{\pi_0(U_0)} = \overline{ \pi(U) }_0$ by demonstrating a closed immersion $\overline{\pi_0(U_0)} \subset (\overline{\pi(U)})_0$ and $\deg(\overline{\pi_0(U_0)})= \deg(\overline{ \pi(U) }_0).$\footnote{ Let $X = X_1 \cup \ldots \cup X_r$ is a decomposition into irreducible components
and write $X_i = V(f_i)$ where $f_i$ is an irreducible polynomial. 
This implies $X=V(f)$ where $f = \prod_{i=1}^r f_i$.
This implies $\deg(X) \geq \deg(X_i)$.
We have $\deg(X) \geq \deg(X_i)$. 

Note that if $\deg(X) = \deg(X_i)$ then $X=X_i$. 
This is because $f_i \vert f$ and $\deg(f_i) = \deg(f)$ implies $\deg(f/f_i) = 0$ 
which implies $(f) = (f_i)$.}

Let $J \subset R[S,T]$ be the ideal defining $\overline{\pi_0(U_0)} \subset \AA^2_{R}$.
By commutativity of
$$
\xymatrix{
  R[S,T] \ar[r]^{\sigma} \ar[d]^{\alpha} & B \ar[d] \\
  R[S,T]/p \ar[r]^{\sigma_0} & B/p.
}
$$
we have $(f,p) \subset \ker(\alpha \circ \sigma_0)=J$.
This implies $\overline{\pi_0(U_0)}= V(J) \subset V(f,p) = (\overline{\pi(U)})_0\subset \AA^2_R$. 

Observe $\deg( \overline{ \pi_0(U_0)}) = \deg( \pi_0(X_0) ) = \deg(X_0) = d$.
On the other hand $\deg( \overline{ \pi(U)}_0) = \deg( \pi(X_0) ) = \deg( F \mod p) \leq d = \deg( \overline{ \pi_0(U_0)})$.
We can now conclude that $(f,p)=J$. 

This implies that $\ker(\sigma_0) =J/(p) = (\fbar)$. 
Since $A_0/(\fbar) = A_0/\ker(\sigma_0) \inclusion \OO(U_0)$
We can work directly with the equation $f(x,y)=0$. 
In particular we use nonvanishing of $\partial f/\partial x$ and $\partial f/ \partial y$ which follows from the description of \`etale projections (Lemma \ref{lem: etale property})
\begin{eqnarray*}
  && \r + \fphi_x(x^p,y^p) \xdot + \fphi_y(x^p,y^p) \ydot \\ && + \frac{p}{2}( \fphi_{xx}(x^p,y^p) \xdot^2 + 2 \fphi_{xy}(x^p,y^p) \xdot \ydot + \fphi_{yy}(x^p,y^p) \ydot^2 \equiv 0 \mod p^2
\end{eqnarray*}
where $\r = \frac{\fphi(x^p,y^p)-f(x,y)^p}{p} \in \OO(U)$. 

Hence $\psi_{21}$ can be computed by solving for either $\xdot$ in terms of $\ydot$ or $\ydot$ in terms of $\xdot$. 
This is possible mod $p^n$ for every $n\geq 2$ if either $f_x(x^p,y^p)$ or $f_y(x^p,y^p)$ is invertible in $\OO(U)_0$. 
This is equivalent to having $f_x$ or $f_y$ being not identically zero mod $p$ and the projections are \`etale on $U$ exactly when the partial derivatives are nonvanishing. 
This is true since the morphisms $\sigma_0: R_0[S,T]/(f) \to \OO(U)/p$ is injective (which we just proved).
We now apply the local computations (Lemmm \ref{lem:non-vanishing criteria} to establish 
$$\psi_{21} \mod p^{n} \in A_n$$ 
for each $n\geq 2$.
\end{proof}

\begin{figure}[h]\label{fig:synthetic PP3}
\begin{center}
 \includegraphics[scale=0.5]{./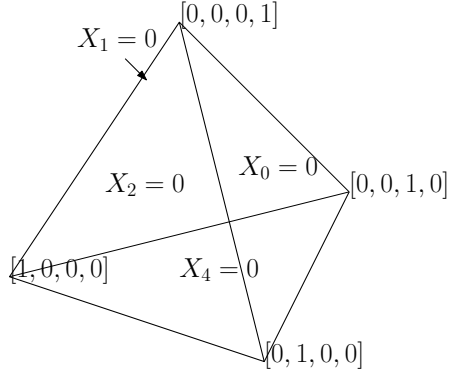}
\end{center} 
\caption{ A picture of $\PP^3$ with its standard decomposition.}
\end{figure}

\begin{figure}[h]\label{fig:synthetic PP3 bad}
\begin{center}
 \includegraphics[scale=0.5]{./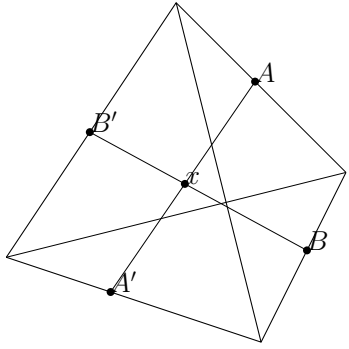}
\end{center} 
\caption{ If there exists some point $x$ such that $T_{x}X$ is equal $L(\Lambda',\Lambda,P)$ for all $\Lambda\in \lambda_2$ then we would have $\overline{AA'} = \overline{B'B}$. The situation looks very bad in this simple case.}
\end{figure}

\begin{lem}\label{lem: covers}
 Let $R = W_{p,\infty}(k)$ where $k\subset \FFpbar$.
 Let $X \subset \PP^n_R$ be a smooth irreducible curve. 
 There exists a system of linear forms $\lambda = \lbrace \l_0 , \ldots, \l_n \rbrace$ 
 such that $ (X_{\Lambda} \to X)_{\Lambda \in \lambda_2}$
 form a cover and which $\pi_{\Lambda}: X_{\Lambda} \to \AA^1_R \subset \PP^1_R$ \`etale onto its image.
 (cf section \ref{projs}).
\end{lem}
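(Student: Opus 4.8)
The plan is to construct the system of linear forms $\lambda = \{\l_0,\ldots,\l_n\}$ by a dimension-counting/genericity argument on the Grassmannian of flags in $\PP^n_R$, working essentially over the residue field $k$ (or its algebraic closure) since openness of the relevant conditions lifts. First I would recall from Lemma \ref{lem: etale property} that for a linear subspace $\Lambda$ with complement $\Lambda'$ (both coming from a decomposition $\lambda$, so $\dim\Lambda = 2$ lives in $\lambda_2$), the projection $\pi^{\Lambda'}_{\Lambda}\colon X\setminus(X\cap\Lambda')\to\Lambda$ fails to be \'etale at a point $x\in X$ precisely when the line $L(\Lambda,\Lambda',x)=\overline{x,\pi^{\Lambda'}_\Lambda(x)}$ equals the tangent line $T_{X,x}$; and it is undefined at $x$ only when $x\in\Lambda'$. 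So $X_\Lambda$ is exactly the open set of $x\in X$ with $x\notin\Lambda'$ and $T_{X,x}\neq \overline{x,\pi^{\Lambda'}_\Lambda(x)}$. The goal reduces to: choose $\lambda$ so that every $x\in X$ lies in $X_\Lambda$ for at least one $\Lambda\in\lambda_2$.

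The key step is a genericity argument. For a fixed point $x\in X$, the "bad locus" in the space of decompositions $\lambda$ consists of those $\lambda$ for which, for every plane $\Lambda\in\lambda_2$ (there are finitely many, $\binom{n+1}{3}$ of them, one for each triple of the $\l_i$), either $x$ lies in the complementary subspace $\Lambda'$ or $T_{X,x}=L(\Lambda,\Lambda',x)$. Each of these is a closed condition on $\lambda$ of positive codimension in the parameter space of decompositions (an open subset of $(\check{\PP}^n)^{n+1}$ cut out by the general-position requirement), so for fixed $x$ the bad set $Z_x$ is closed of codimension $\geq 1$. As $x$ varies over the curve $X$ (dimension $1$), the union $\bigcup_{x\in X} Z_x$ has dimension at most $\dim X + \dim Z_x \le 1 + (\dim(\text{param space})-1) < \dim(\text{param space})$, so its complement is a nonempty open subset of decompositions. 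Any $\lambda$ in that complement has the property that for every $x\in X$ there is some $\Lambda\in\lambda_2$ with $x\in X_\Lambda$, i.e. the $X_\Lambda$ cover $X$. One then descends a rational point of this open set from $\FFpbar$ to $k$ (or passes to $W_{p,\infty}(\FFpbar)$ if necessary; the statement only requires existence) and notes that the linear projections $X_\Lambda\to\Lambda\cong\PP^1_R$ land in an affine chart $\AA^1_R$ after a further coordinate choice, being \'etale onto their image by construction. Finiteness of $\lambda_2$ guarantees the cover is finite.

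The main obstacle I anticipate is making the dimension count honest: one must verify that for a \emph{fixed} point $x$, the condition "$T_{X,x}=L(\Lambda,\Lambda',x)$ for all $\Lambda\in\lambda_2$ simultaneously" really does cut out a proper closed subset of the decomposition space — a priori it is conceivable (and Figure \ref{fig:synthetic PP3 bad} is warning us about exactly this) that some rigidity forces all these lines to coincide with $T_{X,x}$ at once for an open set of $\lambda$. The way around this is to exhibit, for each $x$, a single explicit decomposition $\lambda$ and a single plane $\Lambda\in\lambda_2$ with $x\notin\Lambda'$ and $L(\Lambda,\Lambda',x)\neq T_{X,x}$ (e.g. choose $\Lambda'$ to be an $(n-3)$-plane missing $x$ and chosen so that $\overline{\Lambda',x}$ — a hyperplane — does not contain $T_{X,x}$, which is possible since $T_{X,x}$ is just a line); this shows $Z_x$ is a proper closed subset, which is all the count needs. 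A secondary technical point is checking that the general-position requirement on $\lambda$ (the $\l_i$ in general position) is itself an open dense condition compatible with avoiding $\bigcup_x Z_x$, but that is routine since intersecting two nonempty opens in an irreducible parameter space is nonempty.
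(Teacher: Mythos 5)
Your overall strategy --- vary the decomposition $\lambda$ over its parameter space and argue that a generic choice works --- is a legitimate alternative to the paper's route, which instead fixes one general-position decomposition and shows synthetically that at most one projection can fail at any given point of $X$. But your dimension count has a genuine gap. Writing $P$ for the parameter space of decompositions, you establish (and your proposed patch at the end only establishes) that the bad locus $Z_x$ of decompositions failing at a fixed $x$ is a \emph{proper} closed subset, i.e.\ of codimension at least $1$; you then assert
$\dim\bigl(\bigcup_{x\in X} Z_x\bigr) \le \dim X + \dim Z_x \le 1 + (\dim P - 1) < \dim P$.
But $1 + (\dim P - 1) = \dim P$, so the strict inequality is false. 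With only codimension $1$ for each $Z_x$, the incidence variety $\{(x,\lambda): \lambda \in Z_x\}$ has dimension $\dim P$ and its projection to $P$ may be dominant --- that is, every decomposition could have finitely many bad points, which is exactly the failure mode you must exclude. So ``$Z_x$ is a proper closed subset'' is \emph{not} all the count needs.

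To repair the argument you must show $\mathrm{codim}\, Z_x \ge 2$ for every $x$, and exhibiting a single good $\Lambda$ for each $x$ does not do this. This is precisely where the paper's synthetic incidence argument does its work: if two distinct projections attached to $\lambda$ both fail at $x$, the two fiber lines both equal the tangent line $T_{X,x}$ and hence coincide, and the resulting incidence forces $x$ to lie in a low-dimensional coordinate subspace of $\lambda$ --- a condition of codimension $\ge 2$ on $\lambda$ for fixed $x$, and one the paper excludes outright by first moving the centers of projection off $X$ (the count $\dim(X\cap W) = (N-3)+1-N < 0$ plus the moving lemma). Either import that argument to bound $\mathrm{codim}\, Z_x$ from below, or follow the paper: fix one decomposition whose centers miss $X$ and verify pointwise that at most one projection can be non-\'etale at each $x$. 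A smaller issue, which the paper shares: a nonempty open subset of $P$ over a finite field $k$ need not have a $k$-point, so the decomposition produced may only be defined over $\FFpbar$ rather than over $k$; this is harmless in the paper's application, where $k=\FFbar_p$.
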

\begin{proof}
It suffice to show that there exists a decomposition $\lambda$ over $\FFpbar$ since we can lift any such decomposition to $R$.

Suppose in addition that $X\subset \PP^N$ is a curve and that for all $\Lambda' \in \lambda_{N+1-2} \cup \lambda_{N+1-3}$ we have $X \cap \Lambda' =\emptyset$ so that all of the projections 
 $$\pi^{\Lambda'}_{\Lambda}: X\to \Lambda\cong \PP^1 \mbox{ or } \PP^2$$ 
are well-defined.
Without loss of generality we can assume that the decomposition $\lambda$ comes from the coordinates $X_0,\ldots,X_N$ on $\PP^N$.

Suppose that there exists some $x \in X$ such that for all $\Lambda \in \lambda_2$ that $\pi^{\Lambda'}_{\Lambda}(x)$ is not \'etale at $x$. Using the notation introduced in equation \ref{eqn:unique line} we would have
 $$ L(\Lambda',\Lambda,x) = T_xX$$
for all $x\in X$. Here $T_xX$ is interpreted as the physical tangent line for the embedded curve $X$. This leads a silly situation which we will show cannot be possible by means of synthetic argument. 
See figures \ref{fig:synthetic PP3} and \ref{fig:synthetic PP3 bad} for a picture of this situation.

Suppose that $M,N\in \lambda_2$ are not equal and let $L=L(M,M',x)$ and $K=L(N,N',x)$.
We claim that not both $L$ and $K$ can be in the tangent space of $x$.

Let $A$ be the unique point where $L$ intersects $M$ and $A'$ be the unique point where $L$ intersects $M'$. 
Define $B$ and $B'$ similarly for $N$ and $N'$.
If both $L$ and $K$ are lines tangent to $X$ at $x\notin \lambda_1$ we have $L = K$. 
This implies $L$ intersects $M$ at $A$.
This also implies $L$ also intersects $N$ at $B$. 
But $M$ and $N$ intersect in a unique point $C$. 
This means that $M$, $N$ and $L$ are contained in the unique plane $\pi$ spanned by $A$, $B$ and $C$. Since $\pi$ is also the unique plane spanned by $M$ and $N$, this means that $\pi \in \lambda_3$. 
But by hypothesis we supposed that $x$ was not in any $\pi \in \lambda_{3}$ which is a contradiction.

It remains to show that for every curve $X\subset\PP^N_{\FFpbar}$ there exists some decomposition $\lambda$ such that $X$ does not intersect any $\Lambda' \in \lambda_{N+1-3}$. 
This can be done by the moving lemma and dimension counting.

Recall that if $X$ and $W$ are subvarieties of $\PP^N$ we say they intersect properly if
 $$\dim(X\cap W ) = \max\{ \dim(W) + \dim(X) - N, 0\}.$$

Let $W$ be the unions of the centers of projections to coordinate planes. 
$ W= \bigcup_{\Lambda \in \lambda_{3} } \Lambda'.$ 
Since $W$ has dimension $N-1-2$ and $X$ has dimension $1$ if $W$ and $X$ intersected properly we would have 
$$\dim(X\cap W) = (N-1-2) + 1 - N = -2$$
which imply that the intersection is empty. 
By the moving lemma ($\FFpbar$ is an infinite field) we can arrange so that $X$ and $W$ have an empty intersection.
\end{proof}

\begin{thm}\label{an existence}
 Let $R = W_{p,\infty}(k)$ where $k = \FFbar_p$. 
 Let $X_d \subset \PP^N_R$ be a smooth irreducible curve of degree $d$ and suppose that $d>p$ then for every $n\geq 1$, 
 $\J^1(X)_n \to X_n$ admits an $A_{n}$-structure.
\end{thm}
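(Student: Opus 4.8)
The plan is to assemble the global $A_n$-structure on $\J^1(X)_n \to X_n$ from the local data produced in Lemmas \ref{lem:affine_transition}, \ref{lem: etale property}, \ref{lem: injectivity} and \ref{lem: covers}, using the fact (Step 1, Lemma \ref{Lemma:trivialization lemma}) that over each \'etale chart $U_i \to \AA^1_R$ the first $p$-jet space trivializes as $\Jhat^1(U_i)\cong \Uhat_i \hattimes \AAhat^1_R$. First I would invoke Lemma \ref{lem: covers}: choose a system of linear forms $\lambda$ so that the projections $\pi_\Lambda\colon X_\Lambda \to \AA^1_R \subset \PP^1_R$, for $\Lambda \in \lambda_2$, are \'etale onto their image and the $X_\Lambda$ cover $X$. (Here the hypothesis $d>p$ enters only through its role in guaranteeing, via Lemma \ref{lem: injectivity}, that the degree-counting argument identifying the reduced image works; more honestly the degree hypothesis $\deg(X)<p$ is what Lemma \ref{lem: injectivity} needs, and the statement should read accordingly.) Intersecting these charts further with the principal opens $D(f_x)$, $D(f_y)$ coming from a plane model of each $\pi_\Lambda(X)$, I obtain an affine open cover $(U_i \to X)_{i\in I}$ together with \'etale maps $\varepsilon_i\colon U_i \to \AA^1_R$ and associated trivializations $\psi_i\colon \Jhat^1(U_i) \isom \Uhat_i \hattimes \AAhat^1_R$.

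Next I would show that on overlaps the transition maps lie in $A_n$ after reduction mod $p^n$. Fix $i,j$. On $U_{ij}=U_i\cap U_j$ the two \'etale parameters $\varepsilon_i^*(T)$ and $\varepsilon_j^*(T)$ give two generators of $\OO(U_{ij})$ over $R$ that are algebraically dependent, and by Lemma \ref{lem: injectivity}(1) the minimal relation $f$ between them stays minimal after reduction mod $p$, so $\ker(\sigma_0)=(\bar f)$ and we may compute the transition map $\psi_{ji}=\psi_j\circ\psi_i^{-1}$ directly from the defining equation $f(x,y)=0$ exactly as in the local computation of Lemma \ref{lem:affine_transition}. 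Since the relevant partials $f_x, f_y$ are nonvanishing mod $p$ on $U_{ij}$ (by \'etaleness of the projections, Lemma \ref{lem: etale property}, together with $\deg f < p$ via Lemma \ref{lem:non-vanishing criteria}(2) applied through the injectivity just noted), Lemma \ref{lem:affine_transition}(1) gives $\psi_{ji}\otimes_R R_n \in A_n$ for all $n\ge 2$; for $n=1$ the statement $A_1\le \AL_1(\OO_{X_1})$ is immediate since mod $p$ the transition map is $a_0 + a_1\xdot$ with $a_1 = -f_y^{\phi}/f_x^{\phi}$ (or its reciprocal) a unit. Finally I would check the cocycle condition: the $\psi_{ji}$ automatically satisfy $\psi_{ki}=\psi_{kj}\psi_{ji}$ on triple overlaps because they are honest transition maps of the single bundle $\Jhat^1(X) \to \Xhat$, and since each $\psi_{ji}\otimes R_n$ lies in the subgroup $A_n \le \Auts(\AA^1_{R_{n-1}})$ (a subgroup by the Proposition proved above), the collection $(\psi_i, U_i\to X)_{i\in I}$ reduced mod $p^n$ is an $A_n$-atlas; passing to the maximal such atlas gives the desired $A_n$-structure on $\J^1(X)_n \to X_n$.

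I expect the main obstacle to be the bookkeeping in reducing to the plane-curve situation while keeping all \emph{three} nonvanishing/minimality facts aligned on the same overlap: one must know simultaneously that (a) the two \'etale parameters on $U_{ij}$ are cut out by a single plane equation $f$ of degree $<p$, (b) that this $f$ remains the generator of the kernel mod $p$ so that the explicit computation of Lemma \ref{lem:affine_transition} is legitimate, and (c) that the $U_\Lambda$ actually cover $X$ after the extra shrinking to $D(f_x)\cup D(f_y)$. Item (a) requires that the projection $\pi_\Lambda$ factors through a $\PP^2$ in which $X$ maps to a plane curve of controlled degree — this is exactly the setup of Lemma \ref{lem: injectivity} and Figure \ref{fig:projections}; item (b) is Lemma \ref{lem: injectivity}(1); and item (c) follows because \'etaleness of $\pi_\Lambda$ at a point $x$ forces, by Lemma \ref{lem: etale property}, at least one of $f_x, f_y$ to be nonzero there, so $D(f_x)\cup D(f_y)\supseteq X_\Lambda$. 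Modulo this organizational care, no new computation beyond Lemmas \ref{lem:affine_transition}--\ref{lem: covers} is needed; the theorem is essentially their conjunction.
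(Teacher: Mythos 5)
Your proposal is correct and follows essentially the same route as the paper: cover $X$ by the \'etale projection charts of Lemma \ref{lem: covers}, factor each pair of projections through a common $\PP^2$ so that Lemma \ref{lem: injectivity} (resting on the local computation of Lemma \ref{lem:affine_transition}) places the transition maps $\psi_{ji}\otimes_R R_n$ in $A_n$, and conclude via the subgroup property of $A_n$. You are also right that the hypothesis as printed, $d>p$, is a typo for $d<p$, which is what condition $(*)$ and Lemma \ref{lem: injectivity} actually require.
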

\begin{proof}
Let $\lambda = \lbrace l_0, \ldots, l_n\rbrace $ as in Lemma \ref{lem: covers}.
By change in coordinates we can assume without loss of generality that the $l_0,\ldots, l_n$ are the coordinate hyperplanes given by
$l_i = V(X_i)$.
Let $l_0', \ldots, l_n'$ be the coordinate axes,
Let $U_i$ be the subset of $X$ where the projection map to $l_i'$ is \`etale
and $\vareps_i: U_i \to \AA^1$ be the \`etale projection.
and let $\psi_i: \J^1(U_i) \to \Uhat_i \hattimes \AAhat^1_R$ be the affine bundle chart of $\Jhat^1(X)$ associated to $\vareps_i$.

For each pair of lines $\Lambda_1,\Lambda_2 \in \lambda_1$ one can see that $\pi_{\Lambda_1}$ and $\pi_{\Lambda_2}$ factor through $\pi_{\Lambda}$ where $\Lambda = \overline{\Lambda_1,\Lambda_2}$.
Letting $U = X_{\Lambda_1} \cap X_{\Lambda_2}$ puts us in the hypotheses of Lemma \ref{lem: injectivity}.
If $\psi_1$ and $\psi_2$ are the associated transition maps we have 
$\psi_{12} = (\psi_1 \circ \psi_2^{-1}) \otimes_R R_n \in A_{n}(U_{ij})$ for every $n\geq 1$ which proves our result.
\end{proof}

\subsection{Steps 3 and 4: Reduction of an $A_{n}$-structure}\label{sec: step34}

The following theorem allows us to reduce the structure group of the first $p$-jet space of a smooth curve $X/R$ of genus $g\geq 2$.

\begin{thm}\label{reduction}
Let $X/R$ be a scheme and $\pi: E\to X$ an $\AA^1_{R}$-bundle. 
Suppose that $E_n/X_n$ admits $A_{n}$-structures.
Let $[L_0] \in \Pic(X_0)$ be the class naturally associated to the $\AL_1(\OO_{X_0})$-structure on $E_0$ as in Remark \ref{classes}.

If $H^1(X_0,L_0^*)=0$ then $E_n$ admits an $\AL_1(\OO_{X_n})$-structure. 

Here $L_0^*$ denotes the dual of $L_0$.
\end{thm}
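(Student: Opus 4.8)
The plan is to reduce the structure group of $E_n$ from $A_n$ down to $\AL_1(\OO_{X_n})$ one polynomial degree at a time, killing the top coefficient at each stage by a \u{C}ech-cohomology argument, and then to invoke Lemma \ref{rem: classes}. Write $A_{n,d}\le A_n$ for the subgroup of automorphisms of degree $\le d$, so that $A_{n,n}=A_n$ and $A_{n,1}=\AL_1(\OO_{X_n})$. First I would record the group theory of this filtration: composing and inverting inside $A_n$ exactly as in the proof that $A_n$ is a group, the degree-$d$ coefficient of a product is controlled modulo lower-degree data, so $A_{n,d-1}$ is normal in $A_{n,d}$ with abelian quotient. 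Since modulo $p$ the coefficient of $T^e$ carries a factor $p^{e-1}$ for $e\ge 2$, every element of $A_{n,d}$ is affine-linear mod $p$; thus $E_0$ already carries its $\AL_1(\OO_{X_0})$-structure, and its linear transition cocycle $\{a_1^{(ij)}\}$ represents $[L_0]\in\Pic(X_0)$ (Remark \ref{classes}).

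Next I would compute the obstruction to reducing a given $A_{n,d}$-atlas ($d\ge 2$) to an $A_{n,d-1}$-atlas. Writing $\psi_{ij}=a_0^{(ij)}+a_1^{(ij)}T+\cdots+p^{d-1}a_d^{(ij)}T^d$, the group law yields, modulo $p$, the cocycle relation $a_d^{(ik)}\equiv a_1^{(ij)}a_d^{(jk)}+a_d^{(ij)}(a_1^{(jk)})^{d}$, since only the monomials in which every composition factor has positive degree survive at the leading power $p^{d-1}$. Setting $\omega_{ij}:=a_d^{(ij)}/(a_1^{(ij)})^{d}$ turns this into an honest additive $1$-cocycle for the line bundle $L_0^{*\otimes(d-1)}$, whose transition functions are $(a_1^{(ij)})^{1-d}$; and conjugating the atlas by trivializations whose degree-$d$ coefficients are $c_i$ changes $\omega_{ij}$ by the \u{C}ech coboundary $c_i-(a_1^{(ij)})^{1-d}c_j$. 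Hence a single reduction at fixed level $n$ is governed by the class $[\omega]\in H^1(X_0,L_0^{*\otimes(d-1)})$.

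The crux, and the reason Step 4 raises the level from $n$ to $n+1$, is that for $d>2$ the class above sits in the weight-$(d-1)$ group, whereas the hypothesis only furnishes $H^1(X_0,L_0^*)=0$. The structural fact I would exploit is the factor $p^{d-1}$ in front of $a_d$: because the top coefficient is $p$-divisible, I would pass to the $A_{n+1}$-structure on $E_{n+1}$ (available by hypothesis) and use the flat short exact sequence $0\to\OO_{X_0}\xrightarrow{p^{n+1}}\OO_{X_{n+1}}\to\OO_{X_n}\to 0$ to rewrite the removal of the degree-$d$ term. The aim is to show that, once the lower-degree coefficients have been put in standard form, the genuine residual obstruction to lowering the degree is carried by a \emph{degree-two} correction and therefore lands in $H^1(X_0,L_0^*)$; this is the ``pairing between group and \u{C}ech cohomology'' alluded to in the introduction, funnelling every successive obstruction into the single group $H^1(X_0,L_0^*)$. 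I expect \emph{this weight-confinement} to be the main obstacle: one must verify rigorously that the level-raising, together with the $p$-divisibility of the higher coefficients, replaces the naive weight-$(d-1)$ obstruction by a weight-$1$ one, so that only $H^1(X_0,L_0^*)=0$ is needed and never the higher powers $L_0^{*\otimes(d-1)}$.

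Granting Step 4 in the form $A_{n,d}\Rightarrow A_{n+1,d-1}$, the induction assembles as follows. By hypothesis $E_N$ admits an $A_{N,N}$-structure for every $N$; iterating Step 4 exactly $N-1$ times produces an $\AL_1(\OO_{X_{2N-1}})$-structure on $E_{2N-1}$. Reducing this atlas modulo $p^{n+1}$ (legitimate since $\AL_1$-transition functions remain $\AL_1$ after reduction, and $E_n$ is the reduction of $E_{2N-1}$) yields an $\AL_1(\OO_{X_n})$-structure on $E_n$, for any $N$ with $2N-1\ge n$. Finally, by Lemma \ref{rem: classes} an $\AL_1(\OO_{X_n})$-structure on the $\AA^1_{R_n}$-bundle $E_n$ is precisely the datum of a torsor structure under a line bundle $L$ on $X_n$, with $[L]=[\FT_{X_n}]$ by Remark \ref{classes}; this is the desired conclusion.
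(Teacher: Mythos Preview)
Your general framework (filter by polynomial degree, extract a leading-coefficient cocycle, kill it cohomologically) is the same as the paper's, and your computation that the obstruction to dropping the top degree lands in $H^1(X_0,L_0^{*\otimes(d-1)})$ is correct. But your proposed resolution --- the level-raising ``$A_{n,d}\Rightarrow A_{n+1,d-1}$'' manoeuvre meant to funnel every obstruction into weight $1$ --- is both unnecessary and, as you yourself flag, unproved. There is no mechanism by which lifting to level $n+1$ converts a weight-$(d-1)$ class into a weight-$1$ class; the $p$-divisibility of $a_d$ does not change the $\OO_{X_0}^\times$-weight of the cocycle, it only changes which mod-$p$ layer the cocycle lives in.

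The paper avoids this entirely by organising the induction differently. It inducts on the level $n$: assuming the transition maps at level $n-1$ are already in $\AL_1(\OO_{X_{n-1}})$, the level-$n$ transition maps lie in the much smaller group
\[
M_{n,r}=\{\,a_0+a_1T+p^n(b_2T^2+\cdots+b_rT^r)\,\}\subset\Auts(\AA^1_{R_n}),
\]
not merely in $A_{n,d}$. In $M_{n,r}$ there are no intermediate-degree cross terms (since $p^n\cdot p^n\equiv 0$), so the top coefficient gives a clean crossed homomorphism $\tau_r(\psi)=b_r/a_1\bmod p$ with $\tau_r(\psi\circ\tilde\psi)=\tau_r(\psi)\tilde a_1^{\,r-1}+\tau_r(\tilde\psi)$, and one reduces $r\mapsto r-1$ \emph{within the fixed level $n$}. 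No level-raising occurs in the degree-reduction; the iteration stays at level $n$ until one reaches $M_{n,1}=\AL_1(\OO_{X_n})$.

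Your worry about the hypothesis is well-founded but has a mundane explanation rather than a structural one: the obstruction at step $r$ genuinely lives in $H^1(X_0,L_0^{*\otimes(r-1)})$, and the paper's proof uses the vanishing of \emph{that} group (the ``$L_0$'' appearing inside the proof has transition functions $a_1^{r-1}$, i.e.\ it is the $(r-1)$st power of the $L_0$ in the statement). The single hypothesis $H^1(X_0,L_0^*)=0$ in the theorem is a mild imprecision; what is actually used, and what the parenthetical ``a simple Riemann--Roch computation'' signals, is $H^1(X_0,L_0^{*\otimes(r-1)})=0$ for $2\le r\le n+1$. In the intended application $L_0=\FT_{X_0}$ has negative degree on a curve of genus $\ge 2$, so all these groups vanish. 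You should drop the level-raising idea, run the induction on $n$ with the groups $M_{n,r}$, and accept that the cohomological input is vanishing of $H^1$ for all the relevant tensor powers of $L_0^*$.
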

\begin{proof}
Let $\psi_{ij}^{(n)} \in A_{n+1}(U_{ij})$ be the transition maps on a trivializing cover for $E_n$. 
We will prove that $\psi_{ij}^{(n)} \sim_{A_{n+1}} \psitilde_{ij}^{(n)} \in \AL_1(\OO_{X_n})$ by induction.

The base case with $n=0$ is trivial since $\Auts(\AA^1_{R_n}) = \AL_1(\OO_{X_0})$. 

We will suppose now that $\psi_{ij}^{(n-1)} \in \AL_1( \OO_{X_{n-1}})$ and construct some $\psi_i$'s in $A_{n+1}(U_i)$ such that
 $$ \psi_i \psi_{ij}^{(n)} \psi_j ^{-1} \in \AL_1(\OO_{X_n}). $$

Let $2 \leq r \leq n+1$ and define $M_{n,r}\leq \Auts(\AA^1_{R_n})$ so be the automorphisms of degree less than $r$ of the form
 $$ \psi = a_0 + a_1 T  + p^n( b_2 T^2 + \cdots + b_r T^r) \mod p^{n+1}.$$
Note that $\psi_{ij}^{(n)} \in M_{n,n+1}$ since $\psi_{ij}^{(n-1)} \in \AL_1(\OO_{X_{n-1}})$ and $\psi_{ij}^{(n)} \equiv \psi_{ij}^{(n-1)} \mod p^n$.

We show now prove the following claim: For every $r\geq 2$ if $\psi_{ij}^{(n)} \in M_{n,r}$ then there exists some $\psi_{ij}{'}^{(n)} \in M_{n,r-1}$ such that 
 $$ \psi_{ij}^{(n)} \sim_{M_{n,r}} \psi_{ij}{'}^{(n)} \mbox{   and   } \psi_{ij}^{(n+1)} \equiv \psi_{ij}{'}^{(n)} \mod p^{n+1}.$$
(Note that when we get to $r=2$ we will have shown the structure group on $E_n$ can be reduced to $\AL_1(\OO_{X_n})$.)

For $r\geq 2$ define $\tau_r: M_{n,r} \to \OO_{X_0}$ by 
$$ \tau_r(\psi)= \frac{b_r(\psi)}{a_1(\psi)} \mod p.$$
Now if $\psitilde = \atilde_0 + \atilde_1 T + p^n( \btilde_2 T^2 + \cdots + \btilde_r T^r) \in M_{n,r}$ is another element we have
$$ \tau_r( \psi \circ \psitilde) = \frac{a_1 \btilde_r + b_r \atilde_1}{\atilde_1 a_1} = \tau_r(\psi) \atilde_1^{r-1} + \tau_r(\psitilde).$$ 
This shows $\tau_r$ is a group cocycle with respect to the action of $M_{n,r}$ on $\OO_{X_0}$ (which factors through the quotient $M_{n,r} \to \AL_1(\OO_{X_0}) = \OO_{X_0} \rtimes \OO_{X_0}^{\times} \to \OO_{X_0}^{\times}$, and $\OO_{X_0}^{\times}$ acts on $\OO_{X_0}$ via multiplication after raising an element to the $(r-1)$-st power.

The group cocycle $\tau_r$ induces a group homomorphism $\sigma_r: M_{n,r} \to \OO_{X_0}^{\times} \ltimes \OO_{X_0}$ given by
 $$ \sigma_r: \psi \mapsto ( a_1(\psi)^{r-1} \mod p, \tau_r(\psi) ).$$
Note that this is indeed a group homomorphism:
$$( a_1^{r-1} , \tau_r(\psi)) * (\atilde_1^{r-1}, \tau_r(\psitilde)) = (a_1^{r-1}\atilde_1^{r-1}, \tau_r(\psi) \atilde_1^{r-1} + \tau_r(\psitilde)) = ( (a_1\atilde_1)^{r-1}, \tau_r(\psi\circ \psitilde) ).$$

Let $(m_{ij},a_{ij})$ be the image of the cocycle $\psi_{ij}^{(n)}$ under the map $\sigma_r$. 
Note that 
$$ (1, 0)=(m_{ij}, a_{ij})(m_{jk},a_{jk})(m_{ki},a_{ki}) = (m_{ij}m_{jk} m_{ki})(a_{ij}m_{jk}m_{ki} + a_{kj}m_{ki} + a_{ki}) $$

The condition on the $a_{ij}$'s is a really a condition for a cocycle with values in line bundles: 
Let $L_0$ is a line bundle on $X_0$ with trivializations
 $$ L_0(U_i) = \OO(U_i) v_i$$
where 
 $$ v_j = m_{ij}v_i.$$
Suppose $s_{ij} \in L_0(U_{ij})$ defines a cocycle and define $a_{ij}$ by
 $$ s_{ij} = a_{ij}v_j.$$
Then we have 
 $$ 0=s_{ij} + s_{jk} + s_{ki} = a_{ij} m_{ij} v_i + a_{jk} m_{ik} v_i + a_{ki} v_i.$$

If follows then that since $(m_{ij},a_{ij})) \in \OO_{X_0}^{\times} \ltimes \OO_{X_0}$ define a cocycle then the collection
 $$ s_{ij} := a_{ij} v_j^* \in L^*(U_{ij}) $$
define a cocycle in $L^*$. 

By hypothesis $H^1(X_0, L_0^*)$ is trivial (a simple Riemann-Roch computation) and we have 
 $$ s_{ij} = s_i - s_j$$
for some collection $s_i \in L_0^*(U_i)$. 
Define $a_i \in \OO_{X_0}(U_i)$ by
 $$ s_i = a_i v_i^*, \ \ \  i \in I.$$
This gives 
 $$ s_i - s_j = (a_i m_{ij} a_k) v_j^* $$
so 
$$ a_{ij} = a_i m_{ij} - a_j$$
or 
$$ -a_i m_{ij} + a_{ij} + a_j =0.$$

In terms of \v{C}ech cochains on $\OO_{X_0}^{\times} \ltimes \OO_{X_0}$ this means
$$ (1,a_i)(m_{ij},a_{ij})(1,a_j) = (m_{ij}, -a_im_{ij}+a_{ij}+a_j) = (m_{ij},0). $$

Let $\psi_i = T - p^n a_i T^t$ be elements of $M_{n,r}(U_i)$. 
We have 
$$ \sigma_r ( \psi_i \circ \psi_{ij} \circ \psi_j^{-1} ) = (m_{ij},0) $$
which implies that $\psi_i \circ \psi_{ij} \circ \psi_j \in M_{n,r-1}$. 
Hence we have that for all $r\geq 2$ and all $\psi_{ij}^{(n)} \in M_{n,r}$ there exists some $\psi_{ij}^{(n)} \in M_{n,r-1}$ such that $\psi_{ij}^{(n)} \sim_{M_{n,r}} \psi_{ij}'{}^{(n)}$ and $\psi_{ij}^{(n+1)} \equiv \psi_{ij}'{}^{(n)} \mod p^{n+1}$.
This completes the proof. 
\end{proof}

To complete the proof of the Main theorem we apply Theorem \ref{reduction} to $E$ being the first $p$-jet space of a curve together with its $A_n$-structure given in section \ref{sec: an structs}

\begin{proof}[Proof of Main Theorem]
Consider the $A_n$-structure on $J_p^1(X)_n$ coming from Theorem \ref{an existence}.
We apply theorem \ref{reduction} so that $L_0 = \FT_{X_0}$ -- by Riemann-Roch we have $H^1(X_0,L_0^*)=0$ are we are in the hypotheses of Theorem \ref{reduction}.
\end{proof}

\bibliographystyle{alpha}
\bibliography{../../bib/clean}

\end{document}